\numberwithin{equation}{section}
\theoremstyle{plain}
\newtheorem{thm}{Theorem}[section]
\newtheorem{lemma}{Lemma}[section]
\newtheorem{corollary}{Corollary}[section]
\newtheorem{proposition}{Proposition}[section]
\newcommand{\beq}{\begin{eqnarray*}}
\newcommand{\eeq}{\end{eqnarray*}}
\newcommand{\beqn}{\begin{eqnarray}}
\newcommand{\eeqn}{\end{eqnarray}}
\newcommand{\ra}{\rightarrow}
\newcommand{\bi}{\begin{itemize}}
\newcommand{\ei}{\end{itemize}}
\newcommand{\be}{\begin{equation}}
\newcommand{\ee}{\end{equation}}
\newcommand{\pkg}[1]{{\fontseries{b}\selectfont #1}}
\newcommand{\bfm}[1]{\ensuremath{\mathbf{#1}}}
          \def\cG{{\cal  G}}
\def\bi{\bfm i}
          \def\cN{{\cal  N}}
     \def\bS{\bfm S}
\def\eps      {\varepsilon}
\newcommand{\lbl}{\label}
\newcommand{\ignore}[1]{}{}
\newcommand{\Loss}{{\mathcal L}}
\newcommand{\e}{\mathbb{E}}
\newcommand{\p}{\mathbb{P}}
\newcommand{\mS}{\mathcal{S}}
\newcommand{\mC}{\mathcal{C}}
\newcommand{\mR}{\mathbf{R}}
\newcommand{\htau}{\hat{\tau}}
\newcommand{\argmin}{\mathop{\rm arg\min}}
\numberwithin{equation}{section}
\theoremstyle{definition}
\theoremstyle{definition}
\newtheorem{condition}{Condition}[section]
\def\be{\begin{equation}}
\def\ee{\end{equation}}
\newcommand{\bve}{\mbox{\boldmath$\varepsilon$}}
\newcommand{\btheta} {\boldsymbol{\theta}}
\newcommand*\dt{\mathop{}\!\mathrm{d}}
\newcommand{\sigst}{\mathcal{S}} 
\let\oldabs\abs
\def\abs{\@ifstar{\oldabs}{\oldabs*}}
\let\oldnorm\norm
\def\norm{\@ifstar{\oldnorm}{\oldnorm*}}
\begin{document}

\begin{frontmatter}
\title{Consistency of a range of penalised cost approaches for detecting multiple changepoints}
\runtitle{Penalised Cost Approaches for Changepoints}

\begin{aug}
\author{\fnms{Chao} \snm{Zheng}$^{\dagger,*}$ {}}
\author{\fnms{Idris} \snm{Eckley}$^{\dagger}${}}
\and
\author{\fnms{Paul} \snm{Fearnhead}$^{\dagger}${}}

\address{$^{\dagger}$ Department of Mathematics and Statistics\\
Lancaster University
}

\address{$^{*}$ School of Mathematical Sciences\\
University of Southamtpon
}

\runauthor{C. Zheng et al.}

\affiliation{Some University and Another University}

\end{aug}

\begin{abstract}
A common approach to detect multiple changepoints is to minimise a
measure of data fit plus a penalty that is linear in the number of
changepoints. This paper shows that the general finite sample behaviour
of such a method can be related to its behaviour when analysing data
with either none or one changepoint. This property results in simpler conditions
for verifying whether the method will consistently estimate the number
and locations of the changepoints. We apply and demonstrate the
usefulness of these simple conditions for a range of changepoint problems. Our new
results include a weaker requirement on the choice of penalty to
have consistency in a change-in-slope model; and the first results for
the accuracy of recently-proposed methods for detecting spikes.
\end{abstract}

\begin{keyword}[class=MSC]
\kwd[Primary ]{62F30}
\kwd[; secondary ]{60K35}
\end{keyword}

\begin{keyword}
\kwd{Change-point detection, consistency, local region condition, changes in slope, spike plus exponential decay }
\end{keyword}
\end{frontmatter}

\section{Introduction}   \lbl{sec:intro}
Detecting changepoints is a long-standing problem in statistics, dating
at least as far back as \cite{page1955test}. In recent years, there
has been an explosion of research into methods for
detecting multiple changes in data, stimulated by the growing 
need  across many and diverse application areas. For
example, changepoint detection methods have been applied in finance \citep{preup2015detection}, bioinformatics \citep{cribben2017estimating}, network traffic \citep{lung2012distributed} and climatology \citep{itoh2010change}.

There are a number of generic ways to detect multiple changepoints, many
of which are based on recursively or repeatedly applying a method that
detects and locates single changepoint. These include binary
segmentation \citep{vostrikova1981detecting} and its variants such as wild binary segmentation
\citep{fryzlewicz2014wild} and circular binary segmentation \citep{olshen2004circular};
simultaneous multiscale changepoint
estimation \citep{frick2014multiscale,li2016FDR}; scan
statistics \citep{eichinger2018MOSUM}; and the narrowest-over-threshold approach
\citep{baranowski2016narrow}. This paper considers a different class of
popular changepoint algorithms, that aims to jointly detect multiple
changepoints through minimising a penalised cost. This cost involves a
measure of fit to the data together with a penalty that increases
proportionally with the number of changepoints. These penalised cost ideas are
closely related to penalised likelihood methods and model choice
approaches \citep{birge2001gaussian} for linear models that employ an $L_0$ penalty. 

In practice, different applications often require
the ability to detect different types of change, or have different types
of data structure. The simplest case is detecting multiple changes-in-mean in Gaussian data, where the goal is equivalent to find the optimal piecewise-constant mean function. Fast algorithms for penalised cost approaches with an expected run time that can be linear in the amount of data \citep{killick2012optimal, maidstone2018optimal} have been developed. These algorithms have been widely used in applications from  RNA sequencing \citep{cleynen201comparing} to analysis of historical warfare \citep{fagan2019changepoint}.  For detecting other types of change, for example, changes in slope \citep{fearnhead2018detecting}, spikes plus exponential decay\citep{jewell2018exact}, and changes in mean of heavy-tailed data  \citep{fearnhead2019changepoint}, efficient algorithms have also been proposed, and have
been implemented to solve real-world problems such as 
determining the exact moment in time at which a neuron spikes in calcium image data\citep{jewell2019fast}.

In terms of theoretical analysis, existing theory is well-developed for penalised cost approaches for detecting change-in-mean \citep{yao1978estimating, lavielle2000least, tickle2018parallelisation}.
For example, a specific
penalty value is known to give consistent estimates of the number of
changepoints and empirical results suggest that this penalty
choice is tight: with lower values frequently leading to over-estimating
the number of changes. However, separate results needs to be carried out if we want to look at different types of change. Whilst detecting different types of change seems to be similar statistical problems to detecting changes in mean, it is fundamentally more challenging in certain cases due to the complexity of underlying signals.  Asymptotic results about consistent estimates of the number and locations of changepoints in those cases are much more limited and weaker, if not unavailable, in existing literatures, e.g., often opposing an unspecific value of penalty that depends on a loosely defined large enough constant  \citep{fearnhead2018detecting} and/or requiring a finite upper bound on the number of changes as in \cite{yao1978estimating}.  
To our knowledge, the most general results are in \cite{boysen2009consistencies} which provides a simple argument that demonstrates
consistency for a wide class of changepoint models if we measure fit via
a residual sum of squares, and have a penalty that increases faster than the
logarithm of the number of data points. In this paper we develop a general statistical theory for penalised cost approaches of detecting different types of changes, and, in particular, provide theoretical guidance on on how the penalty for each additional changepoint, should be chosen.


 As the first contribution of this paper, we establish a general framework of penalised cost
approaches, which applies to all multiple changepoint models that admit an additive cost as the measure of fit, and show that the performance of the approaches is related to their behaviour when analysing data with either no
change or one changepoint. We call these two specific cases as local regions. Analysing the behaviour of penalsed cost methods in local regions is much simpler than for the case of multiple changes.
Informally our results show, subject to additional conditions, that a
choice of penalty that leads to consistent estimates of the number of
changes when there is one or no changepoints will directly lead to a consistent
estimator when there are multiple changes. Therefore, for different types of changepoint problem, we only need to focus on their properties in local regions. We propose a set of general conditions of detectability of changes across all local regions, that will be sufficient to provide consistent estimator of the number and locations of multiple changes for the entire data.

As the second contribution, 
 we apply the proposed framework to different multiple 
changepoint problems as applications, showing that the local region conditions are satisfied, and hence obtain the theoretical results on the choice of penalty and the corresponding consistent estimates of number of changepoints and their locations. These lead to new results for detecting
changes-in-slope, with a weaker condition on the penalty that is needed
to obtain consistent estimates comparing to existing results in \cite{fearnhead2018detecting}; and lead to the first theoretical
results for the problem of detecting spikes in an exponentially decaying
signal. Using our framework, similar consistency results can be easily derived for many other multiple changepoint detection problems.

\section{General Framework} \lbl{sec:method}


\subsection{Problem setup} \lbl{sec:2.1}

Consider a general changepoint model with $T$ observations $x_{1:T}=\{x_1,\dots,x_T\}$  ordered in sequence, for example by time or position along a chromosome. 
Assume that there are a set of $m$ changepoints, at ordered locations, $\tau_{1:m}=\{\tau_1,\dots,\tau_m\}$, with $0<\tau_1<\tau_2<\cdots<\tau_m<T$. This will partition the data into $m+1$ distinct segments with the $j$-th segment including the observations $x_{\tau_{j-1}+1:\tau_j}=\{x_{\tau_{j-1}+1},\dots,x_{\tau_{j}}\}$, where we write $\tau_0=0$ and $\tau_{m+1}=T$. In other words,  there is a  common structure for the data within each single segment, but the nature of this structure can change between segments. We characterise this structure using segment specific parameters $\btheta_{1:m+1}=\{\btheta_1,\dots, \btheta_{m+1}\}$, which depending on the application, could, for example, be the mean  changes of the data between segments, or the variance changes, or both, among many other possibilities.

We wish to estimate both the number and locations of the changepoints. To this end we focus on the $L_0$ penalised cost methods. These introduce a segment cost, which measures the fit to data within the segment. Often appropriate costs are specified by modelling the data, and setting the cost to be the negative of the log-likelihood under such a model. For a segment with data $x_{s:e}$ for some $e> s$, we have a segment cost $\mC(x,s,e; \btheta)$ that will depend on the segment specific parameters $\btheta$. We will assume that the cost is additive over data points: 
\begin{equation} \label{eqn:add}
\mC(x,s,e; \btheta)=\sum_{t=s}^e c_t(x_{1:T},\btheta ).
\end{equation}
where, $c_t(x_{1:T},\btheta)$, the cost associated with data point $x_t$ can also depend on other data-points, which allows dependency between $x_{1:T}$. This additive assumption is a quite generally property for segment cost, which holds for most problems in changepoint literature.

We then define the cost for fitting a set of $m$ changepoints, $\tau_{1:m}$, with associated parameters $\btheta_{1:m+1}$ as 
\begin{align*}
\sum_{j=1}^{m+1}\mC(x,{\tau_{j-1}+1, \tau_j}; \btheta_j).
\end{align*}
We minimise over the parameters $\btheta_{1:m+1}$ simultaneously to define the cost associated with the segmentation:
\begin{align*}
\Loss\left(x_{1:T};\tau_{1:m}\right)=\min_{\btheta_{1:m+1}}\sum_{j=1}^{m+1}\mC(x,{\tau_{j-1}+1, \tau_j}; \btheta_j),
\end{align*}
where, potentially, the minimisation can be subject to constraints on the segment parameters. For simplicity, here we assume the constraints fix the relationship between the parameters of neighbouring segments.
 For example, we may enforce strict monotonicity in a change-in-mean model, i.e, $\theta_{j+1}\in(\theta_j,\infty)$, where $\theta_j$ is mean value in $j$-th segment; or in a change-in-slope model $\theta_j$ will specify a linear function for the mean signal within $j$-th segment, and the constraints would enforce continuity at the changepoints for two consecutive linear functions. Our results in this paper will remain valid for more general constraints as long as (\ref{lem.property.1}) and (\ref{lem.property.2}) in Lemma \ref{lem:1}, given below, still hold.

We can extend our definition of cost so that it applies to a subset of data $x_{s:e}$ and a series of changepoint locations $s\le \tau_{u:v}<e$. Using the notation that $\tau_{u-1}=s-1$ and $\tau_{v+1}=e$, we have
\begin{align*}
\Loss\left(x_{s:e};\tau_{u:v}\right)=\min_{\btheta_{u:v+1}}\sum_{j=u}^{v+1}\mC\left(x, {\tau_{j-1}+1, \tau_j}; \btheta_j\right).
\end{align*}
We take the convention that if the set of changepoints contains changes outside the region of data, then these changes are ignored.
For example, if $\tau<s$ or $\tau\ge e$ then  $\Loss(x_{s:e};\tau_{u:v},\tau)=\Loss(x_{s:e};\tau_{u:v})$. Let $\Loss(x_{s:e}; \varnothing)$ denote the segmentation cost where there is no changepoint between $s$ and $e$.

If we know the number of changepoints, $m$, it would be natural to estimate their locations by the set $\htau_{1:m}$, which minimises $\Loss(x_{1:T}; \tau_{1:m})$. However, in practice we need to also estimate the number of changepoints. Therefore we consider methods that estimate $m$ and $\tau_{1:m}$ simultaneously as the value that minimises the $L_0$ penalised cost:
\begin{equation} \lbl{def.est}
\argmin_{(m, \tau_{1:m})}\big\{\Loss\left(x_{1:T};\tau_{1:m}\right)+\beta m\big\},
\end{equation}
where $\beta>0$ is a user-defined tuning parameter that penalises the addition of each changepoint. We call (\ref{def.est}) minimisation of $L_0$ penalised cost as $m\beta$ can be viewed as an $L_0$ penalty on the difference in segment parameters associated with neighbouring time-points.


We will use the superscript $\ast$ to denote the true changepoint locations and parameter values. That is, the true model will have $m^\ast$ changepoints, at locations $\tau^\ast_{1:m^\ast}$, and with segment parameters $\btheta^\ast_{1:m^\ast+1}$. If we impose constraints when calculating the cost of a segmentation, we require the true segment parameters to satisfy those constraints. We will also define $\Loss^\ast(x_{s:e})$ to be the cost of fitting data $x_{s:e}$ with the true set of changepoints and the true parameters, that is
\begin{align*}
 \Loss^\ast(x_{s:e})= \mC(x, {s, \tau^\ast_u}; \btheta^\ast_{u})+\mC(x, {\tau^\ast_v, e}; \btheta^\ast_{v+1})+
 \sum_{j=u+1}^{v} \mC(x, {\tau^\ast_{j-1}, \tau^\ast_j}; \btheta^\ast_j),
\end{align*}
where $u$ and $v$ are defined so that $\tau^\ast_u$ and $\tau^\ast_v$ are, respectively, the first and last true change between $s$ and $e$; and if $u=v$ we can set the summation part to be 0. If $x_{s:e}$ does not contain a change then $\Loss^\ast(x_{s:e})=\mC(x, {s, e}; \btheta^\ast_u)$ where $u$ is the index of the true segment that $x_{s:e}$ lie in.

The follwing properties of the cost, which follow from the additive assumption (\ref{eqn:add}),  will be important for results in subsequent sections.
\begin{lemma} \label{lem:1}
 Assume  (\ref{eqn:add}) holds, then for any $s\le r<e$ and $s\le \tau_{u:v}<e$,  we have
\begin{equation} \label{lem.property.1}
\Loss(x_{s:e};\tau_{u:v})
\ge \Loss(x_{s:r};\tau_{u:v})+\Loss(x_{r+1:e};\tau_{u:v}), \mbox{ and}
\end{equation}
\begin{equation}
 \Loss^*(x_{s:e})=\Loss^*(x_{s:r})+\Loss^*(x_{r+1:e}).\label{lem.property.2}
\end{equation}
\end{lemma}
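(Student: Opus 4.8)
The plan is to treat the two displays separately, since \eqref{lem.property.1} is an inequality arising from a minimisation whereas \eqref{lem.property.2} is an exact additive identity. In both cases the engine is the additivity assumption \eqref{eqn:add}, which lets me split any single segment cost at an interior point: for any $s\le r<\tau$ and any parameter $\btheta$,
\begin{equation*}
\mC(x_{s:\tau};\btheta)=\mC(x_{s:r};\btheta)+\mC(x_{r+1:\tau};\btheta),
\end{equation*}
simply because the right-hand side collects the per-point costs $c_t$ over disjoint index ranges whose union is $\{s,\dots,\tau\}$.

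For \eqref{lem.property.1} I would first fix a minimising configuration $\btheta_{u:v+1}$ for the left-hand side, so that $\Loss(x_{s:e};\tau_{u:v})$ equals the sum of the segment costs $\mC(x_{\tau_{j-1}+1:\tau_j};\btheta_j)$ over $j=u,\dots,v+1$ (with $\tau_{u-1}=s-1$ and $\tau_{v+1}=e$). The split point $r$ lies in exactly one segment, say the $k$-th; applying the additive splitting identity to that one segment (and leaving the others intact) rewrites the total as a cost over $x_{s:r}$ plus a cost over $x_{r+1:e}$, where the two half-segments created at $r$ are both assigned the common parameter $\btheta_k$. The next step is to observe that this yields a \emph{feasible} configuration for each subproblem: the inherited constraints $\btheta_{j+1}\in\Theta(\btheta_j,\tau_j)$ still hold on each side, and no new constraint is imposed at $r$ because $r$ is merely an endpoint, not a changepoint, of the subregions. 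Since $\Loss(x_{s:r};\tau_{u:v})$ and $\Loss(x_{r+1:e};\tau_{u:v})$ are minima over their own configurations, each is bounded above by the cost of this particular feasible configuration, and summing the two bounds gives \eqref{lem.property.1}.

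For \eqref{lem.property.2} the argument is shorter because no minimisation is involved. Here $\Loss^\ast$ is evaluated at the fixed true changepoints and true parameters, so it is literally $\sum_{t=s}^e c_t(x_{1:T},\btheta^\ast(t))$, where $\btheta^\ast(t)$ denotes the true parameter of the segment containing $t$. Splitting this sum at $r$ produces the two sums over $\{s,\dots,r\}$ and $\{r+1,\dots,e\}$, which are exactly $\Loss^\ast(x_{s:r})$ and $\Loss^\ast(x_{r+1:e})$; the absence of any free parameter means there is no slack, so equality holds.

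The point that needs care, and the reason \eqref{lem.property.1} is an inequality rather than an equality, is the segment straddling $r$ in the first part: when the two subproblems are minimised independently, the two halves of that segment may take different parameters and the constraint linking them across $r$ is dropped, so the separated minimum can be strictly smaller than the joint one. I would make sure the case analysis covers $r$ coinciding with some $\tau_j$ (a clean split in which no segment is divided) as well as $r$ strictly interior to a segment, but neither case poses a genuine difficulty once the additive splitting identity and the feasibility check are in place.
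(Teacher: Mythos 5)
Your proof is correct and takes essentially the same route as the paper's: both split the cost of the segment straddling $r$ using the additivity assumption (\ref{eqn:add}), and your step of restricting the joint minimiser to obtain feasible configurations for the two subproblems is just the evaluation-at-a-point form of the paper's observation that $\Loss(x_{s:e};\tau_{u:v})$ minimises over a more constrained space (the coupling $\btheta_k^{(1)}=\btheta_k^{(2)}$, or the neighbouring-segment constraint when $r$ is a changepoint, being dropped in the separated problems). Your handling of (\ref{lem.property.2}) as a parameter-free additive identity likewise matches the paper's argument.
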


\subsection{Local region conditions} \lbl{sec:2.2}


Our aim is to build general conditions under which estimating the number and locations of the changepoints via a $L_0$ penalised cost approach will be consistent, and to quantify the accuracy within which the locations are estimated. We will achieve this by relating properties of penalised cost approach when analysing data with multiple changes in $x_{1:T}$ to its properties when analysing data with either zero or one changepoint in a local region, i.e, $x_{t+1: t+n}$ or $x_{t+1: t+2n}$, respectively, where $n$ or $2n$ is the number of data points the local region.

To this end we introduce the following conditions that govern the value of the penalised cost procedure when fitting data simulated with either zero or one true changepoint with either the correct or too many number of changepoints. These conditions need to apply for our assumed data generating mechanism and our choice of penalised cost. We assume the data generating mechanism is parameterised by the set of changepoints and the segment parameters.



\begin{condition}\label{cond.1}   There exists increasing positive constants $\gamma_n^{(1)}$ and $\gamma^{(2)}_n$, such that $\gamma_n^{(1)}$ and $\gamma^{(2)}_n\ra\infty$ as $n\ra \infty$; and there exists positive  numbers $a(\gamma,n)$ and $b(\gamma,n)$ increasing in $\gamma$,
 and positive functions, $p_j(\gamma,n)$ such that $p_j(\gamma,n)\ra 0$ as $\gamma \ra \infty$, for $j\in \{1,2,3,4\}$, such that:
\
\begin{itemize} 
 \item[(i)]  Let $\bS_{1,n}(t)=x_{t+1:t+n}$ be a segment between $t+1$ and $t+n$ that includes no changepoint.  If $\gamma\geq \gamma^{(1)}_n$,
$$
\max_{t}\, \p\left(\min_{k\ge 1 ,\tau_{1:k}}\{\Loss\left(\bS_{1,n}(t);\tau_{1:k}\right)+k\gamma\}-\Loss^\ast\left(\bS_{1,n}(t)\right)\le a(\gamma,n)\right)\le p_1(\gamma,n),
$$
$$
\max_{t}\, \p\bigg( \Loss^\ast\left(\bS_{1,n}(t)\right)- \Loss\left(\bS_{1,n}(t);\varnothing\right) \geq b(\gamma,n) \bigg) \le p_2(\gamma, n),
$$
where probability is with respect to the data generating mechanism for $\bS_{1,n}(t)$.
\item[(ii)] Let $\bS_{2,n}(t)=x_{t+1:t+2n}$ be a segment between $t+1$ and $t+2n$ that has a single changepoint which is at $t+n$. If  $\gamma\geq \gamma^{(2)}_n$, we have
$$
\max_{t}\, \p\left(\min_{k\ge 2,\tau_{1:k}}\{\Loss\left(\bS_{2,n}(t);\tau_{1:k}\right)+(k-1)\gamma\}-\Loss^\ast\left(\bS_{2,n}(t)\right)\le a(\gamma,2n)\right)\le p_3(\gamma,n),  
$$
$$
\max_{t}\, \p\left( \Loss^\ast\left(\bS_{2,n}(t)\right)- \min_{\tau_1} \Loss\left(\bS_{2,n}(t);\tau_1 \right) \geq b(\gamma, 2n) \right) \le p_4(\gamma,n),
$$
where probability is with respect to the data generating mechanism for $\bS_{2,n}(t)$.
\end{itemize}

\end{condition}

The above condition bounds the reduction in the penalised cost, if we have a penalty of $\gamma$ for adding a changepoint, that can be obtained by fitting too many changes. Note that this penalty of $\gamma$ , is specific for local regions $\bS_{1,n}(t)$ and $\bS_{2,n}(t)$, thus is dependent on $n$ not $T$. Whilst it is different from  the global penalty $\beta$, it plays an essential role in determining the value of $\beta$.

 The above probabilities $p_j(\gamma, n)$ are for the worst case over possible parameters of the data generating mechanism and time-points such that the specified region has no change or one change in the middle. In most situations, for example $x_{1:T}$ are independent and identically distributed, the probabilities will be the same for all choices. Note that by considering the maximum over the set $\{\bS_{2,n}(t)\}$, we must have $m^\ast\ge 1$, that is, there exists at least one true changepoint.

Moreover, we need another condition on the cost function if we do not fit a change near a true changepoint, which is given below.
\begin{condition} \label{cond.2}
Let $\bS_{\Delta,n}(t)=x_{t+1: t+2n}$ be a segment between $t+1$ and $t+2n$ that has a single changepoint which is at $t+n$,  and, with $\Delta=\mbox{dist}(\btheta_{(1)},\btheta_{(2)})$. Here $\btheta_{(1)}$ and $\btheta_{(2)}$ are the parameters associated with the segments immediately before and after $t+n$, and $\mbox{dist}(\cdot,\cdot)$ is a suitable measure of distance in parameter space that may be differing across applications. Then 
we have 
$$
\max_{t}\, \p\bigg(\Loss(\bS_{\Delta,n}(t),\varnothing)-\Loss^\ast(\bS_{\Delta,n}(t))\le z\bigg)\le p_5\big(\sigst(\Delta,n),z\big),
$$
where $\sigst(\Delta,n)$ is a function of signal strength, and we require  $p_5(y,z)\ra 0$ as $y\ra \infty$ and  $y/z\ra \infty$, and the probability is with-respect to the data generating mechanism.
\end{condition}
Condition \ref{cond.2} indirectly defines $\sigst(\Delta,n)$ as the signal strength of a change from segment parameter $\btheta_{(1)}$ to $\btheta_{(2)}$ with data of length $n$ on either side of the change point, where $\Delta$ is some appropriate measure of distance between the two segment parameters. The idea is that the reduction in cost of not fitting the change will be of the order of this signal strength. Again we bound the worst-case probability, but in many cases the probability will be the same for all segments in the set of $\{\bS_{\Delta,n}(t)\}$.

\subsection{Global changepoint consistency}\label{subsec: global.cons}

In this section, based on the introduced local region conditions, we establish consistency of estimates of the number and locations for the changepoints under the penalised cost approach when applied to data simulated with a general number $m^\ast$ of changes. The result is not limited to the type of changes and the underlying data generating mechanism. Thus it builds a general framework to obtain consistency theories for a broad class of changepoint problems.

As earlier, we denote the location of 
the changes by $\tau^\ast_{1:m^\ast}$, and we will denote the true segment lengths by $$\delta_j=\tau^\ast_{j}-\tau^\ast_{j-1}, \quad j=1,\ldots,m^\ast+1,$$ with, as before, $\tau^\ast_0=0$ and $\tau^\ast_{m^\ast+1}=T$.  Let the size of each change be denoted
by 
$$\Delta_j=\mbox{dist}\big(\btheta^\ast_{j+1}, \btheta^\ast_{j}\big), \quad j=1,\ldots,m^\ast,$$ where the distance $||\cdot||$ is defined in Condition \ref{cond.2}. In addition, we define $\delta_T=\min_j{\delta_j}$ and $\Delta_T=\min_j\Delta_j$.

Given a set of integers $n_{1:m^\ast}$ satisfying $0<n_j \le \min\{\delta_j,\delta_{j+1}\}$, we can partition the data into $2m^\ast+1$ regions $\{\bS_1,\bS_2, \dots,\bS_{2m^\ast+1}\}$, such that: 
\begin{equation}\label{eq:m-split}
\begin{cases}
\bS_{2j+1}=x_{(\tau^\ast_j+n_j+1):(\tau^\ast_{j+1}-n_{j+1})}, \\ 
\bS_{2j}=x_{(\tau^\ast_j-n_j+1):( \tau^\ast_j+n_j)},
\end{cases}
\end{equation}
where we define $n_0=n_{m^\ast+1}=0$. See the top plot of Figure \ref{Fig:1} for an example of this partitioning of the data. In this way, each region $\bS_j$ with an odd index $j$ does not contain a true changepoint, and each region with an even index has exactly one true changepoint in the middle, satisfying our definition of local regions. Therefore we can verify if Conditions~\ref{cond.1} and \ref{cond.2} are satisfied on $\{\bS_1,\bS_2, \dots,\bS_{2m^\ast+1}\}$, depending on specific problem.

 \begin{figure}
 \centering  \hspace*{-0.8cm}\includegraphics[scale=0.8]{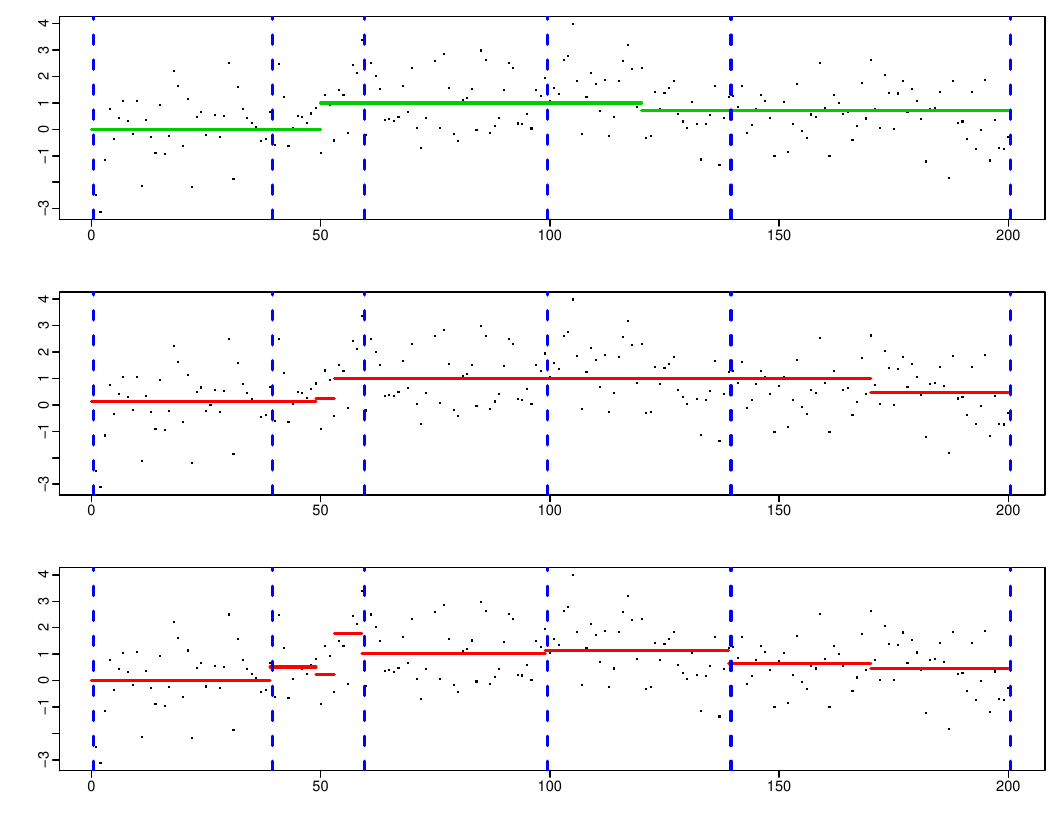}
  \caption{\label{Fig:1} \scriptsize{Top: Example partitioning of the data, for the univariate change-in-mean problem. The green-line is the true mean, which has two changes. Thus our partition has 5 regions, $\bS_{1:5}$, with the even regions containing a change and the odd-regions containing no change. The size of the even regions can be chosen based on the size of the change, with larger regions around smaller changes. Middle: a segmentation of the data, together with fitted mean (red-line), that violates the event (\ref{eq:event}). Such a segmentation will have errors within at least one region, in this case the fitted segmentation fits too many changes in $\bS_2$, misses the change in $\bS_4$ and erroneously fits a change in $\bS_5$. To show the penalised cost of such a segmentation will not be optimal, under our event $E_1$ (see the proof in Appendix) we bound the difference in the cost between such a segmentation and the true segmentation, by the difference of the cost if we fit the segmentation separately within each region (bottom figure) and the cost of the true segmentation with the true parameters (top figure). This difference is simply the sum of the differences of the fits in each region. The key idea is that for regions where a putative segmentation makes an error this difference will be sufficiently large that putative segmentation can not be optimal.}}
 \end{figure}

We are now ready to give a unified result for the changepoint estimation under our penalised cost criteria in the following theorem. 

\begin{thm}\label{thm.global}  Assume Conditions~\ref{cond.1} and \ref{cond.2} hold. For data $x_{1:T}$, let $\hat{m}$ and $\hat{\tau}_{1:\hat{m}}$ be the number and locations of changepoints we obtain by minimising the penalised cost (\ref{def.est}) with penalty $\beta$. 
For any $n_{1:m^*}$ where $0<n_j<\min\{\delta_j,\delta_{j+1}\}$,  
define an event $\mathcal{E}(\beta, n_{1:m^\ast})$ as
\begin{equation} \label{eq:event}
\mathcal{E}(\beta, n_{1:m^\ast}) = \left\{\hat{m}=m^\ast \mbox{ and \,}    |\htau_j-\tau_j^\ast| \le n_j, j=1,\dots, m^\ast\right\},
\end{equation}
and a minimum signal strength as $\sigst=\min_{j=1}^{m^\ast} \sigst(\Delta_j,n_j)$.
Then if
$$ \beta\geq \max\{\gamma^{(1)}_T,\gamma^{(2)}_{\max_i n_j} \} \mbox{ and } a(\beta,T)>2m^*b(\beta,T),$$
we have
\begin{align}\lbl{eq.consistency}
\p\bigg(\mathcal{E}(\beta, n_{1:m^\ast}) \bigg)\nonumber
\ge& 1-(m^\ast+1)p_1\left(\beta,T\right)-(m^\ast+1) p_2(\beta, T)-m^\ast p_3\left(\beta, \max_j n_j\right)\\&-m^\ast p_4\left(\beta, \max_j n_j\right)-m^\ast p_5\left({\sigst}, {\beta+a(\beta,T)}\right).
\end{align}
\end{thm}

The proof of Theorem \ref{thm.global}  is deferred to the appendix. Here we give a pictorial outline of the proof, using change-in-mean problem as an example in Figure \ref{Fig:1}.

Theorem \ref{thm.global} provides the probability bound on the event that the estimated number of changes is correct and the estimated location of each change $\tau^\ast_j$ is within the accuracy of $n_j$. The global penalty $\beta$ is chosen by collecting the maximum of $\gamma_{T}^{(1)}$ and $\gamma_{n_j}^{(2)}$ for all $1\le j\le m^\ast+1$.  If we specify any asymptotic regime such that  
$$\dfrac{\sigst}{\beta+a(\beta, T) }\rightarrow \infty, \quad \mbox{as }T\rightarrow \infty.$$ 
Then if $m^\ast$ is finite or diverging with $T$ in a slow rate, we have
$$m^\ast \min\left\{p_1(\beta,T), p_2(\beta,T),  p_3(\beta,\max_j n_j), p_4(\beta,\max_j n_j), p_5\big(\sigst, {\beta+a(\beta,T)}\big) \right\} \ra 0.$$ 
Hence such an event will hold with probability going to 1.

In the event such that $m^\ast=0$, we do not need to split the data into local regions but treat $x_{1:T}$ as in scenario (i) in Condition \ref{cond.1}; therefore, if $\beta\ge \gamma_{T}^{(1)}$, we have (\ref{eq.consistency}) still holds.
Also, it is simple to adapt the proof to show that we can replace $\beta>\gamma_T$ with $\beta>\gamma_{n_{\max}}$, where $n_{\max}=\max_j\{\delta_j\}$ is the maximum true segment length. This suggests the possibility of using smaller penalties in situations where the maximum segment length is known and is much shorter than $T$. For a given value of the minimum signal strength, $\sigst$, we can optimise the choice of $n_{1:m^*}$ that bound the accuracy of our estimates of the locations of each changepoint and make it to be tighter. Specifically we can choose $n_j$ to be the smallest value such that  $\sigst(\Delta_j,n_j)\geq\sigst$ for each $j=1,\dots, m^\ast$.

\section{Applications} \lbl{sec:application}

In this section, to show the usefulness and broad applicability of the general framework developed in Section \ref{sec:method},  we apply it to the estimation of change-in-mean, change-in-slope as well as changepoint in spike and exponential decay problems. The obtained consistency results for change-in-mean is not new, but it is more flexible than that in \cite{yao1978estimating}, allowing the number of changes, $m^\ast$, to diverge with $T$. For change-in-slope, the results is stronger than previous consistency results, as it carefully specifies the value of the penalty, $\beta$, that ensures consistency. This provides very important information on how to select $\beta$ and make the methods more accessible to practitioners.  For spike and exponential decay, we provide the first consistency result of this problem.

Note that in this section we always assume $m^\ast\ge 1$, meaning that there exists at least one changepoint. Otherwise, we only need to verify scenario (i) of Condition \ref{cond.1} and the global changepoint consistency holds trivially in all the three problems. Proofs for results in this section are deferred to the appendix of this paper.

\subsection{Change-in-mean problem}\lbl{sec:change-in-mean}
First, we revisit the canonical problem of detecting change-in-mean. Suppose we observe data $x_{1:T}$ with underlying decomposition,
$
x_{t}=\mu_t+\eps_t,
$
where $\eps_t\sim \cN(0,\sigma^2)$ are independent and identical distributed (i.i.d.) Gaussian, and $\mu_t$ are piecewise constant means, i.e.:
\begin{align*}
\mu_t=\theta^\ast_{j}, \quad \mbox{ if }\tau^\ast_{j-1}+1\le t\le \tau^\ast_{j}, \quad\mbox{ for all } j=1,\dots, m^\ast+1.
\end{align*} 

\noindent To estimate the set of changepoints, we use the square error loss as the cost function to measure fit to the data. 
That is, fitting a set of points $x_{s:e}$ with the same segment parameter, $\theta$, has cost function in the following form:
\begin{align}
\mC(x, {s, e};\,\theta)=\sum_{t=s}^e\dfrac{(x_t-\theta)^2}{\sigma^2}.
\end{align}
Note that in this application, no constraint is imposed on the parameters that minimise the cost function. Therefore, in fact we wish to minimise over $m$ and $\tau_{1:m}$, for the following penalised cost:
\begin{align}
\sum_{j=1}^{m+1}\sum_{t=\tau_{j-1}+1}^{\tau_j} \dfrac{(x_t-\bar{x}_{\tau_{j-1}+1: \tau_j})^2}{\sigma^2}+m\beta,
 \end{align}
where $ \bar{x}_{\tau_{j-1}+1:\tau_j}=\sum_{t=\tau_{j-1}+1}^{\tau_j}x_t/(\tau_j-\tau_{j-1})$. 

The above minimisation assumes knowledge of the noise variance $\sigma^2$, which can be regarded as a nuisance parameter. In practice if the variance is unknown, it is a common procedure in the literature to plug in, for example,  the Median Absolute Deviation (MAD) estimator \citep{hampel1974influence} applied to the differenced data \citep{baranowski2016narrow}, without any effect on the correctness of the theoretical analysis below. More specifically, for this change-in-mean exmaple we can set 
$$\hat\sigma = \dfrac{\mbox{median}\{|x_2-x_1|, \dots, |x_T-x_{T-1}|\}}{\sqrt{2}\Phi^{-1}({3}/{4})},$$
where $\Phi^{-1}(\cdot)$ is the quantile function of the standard normal distribution.

It is also possible to include the effect of the consistent estimation of variance parameter in the theoretical analysis. To this end, a simple approach is to use some sample-splitting procedure that will be helpful to avoid dependency in the analysis and simplify the results. For example, using the odd data to estimate the variance, and use even data to estimate the changepoints. A drawback of this method is we may increase the variance of our changepoint estimation.

As in Section \ref{subsec: global.cons}, we define the size of the change at $j$-th true changepoint $\tau^\ast_j$ as $$\Delta_j=\mbox{dist}(\theta^\ast_{j+1}, \theta^\ast_j)=|\theta^\ast_{j+1}-\theta^\ast_j|, \quad j=1,\dots,m^\ast,$$ which is the absolute mean difference in two consecutive segments.
The following propositions show Conditions \ref{cond.1} and \ref{cond.2} are satisfied for this change-in-mean application. 


\begin{proposition}\lbl{prop:mean1} 
Consider the following choices of  $\gamma_{n}^{(1)}$ and $\gamma_{n}^{(2)}$

\begin{align*}
\gamma_{n}^{(1)}&=\max\left\{(2+\epsilon)\log n, \,2\log n+\kappa_{1,1}\sqrt{\log n}, \, 2\log n+\kappa_{1,2}m^\ast\right\},\\
\gamma_{n}^{(2)}&=\max\{\kappa_{1,3}m^\ast\log(2n),  2\log(2n)+\kappa_{1,4}m^\ast\},
\end{align*} 

for some large enough constants $\kappa_{1,1}, \kappa_{1,2}, \kappa_{1,3}, \kappa_{1,4}$, and $\epsilon$ is an  positive constant that can be arbitrarily small. Moreover, let  
\begin{align}
a(\gamma,n)=\dfrac{\gamma-2\log n}{4} \quad \mbox{ and }  b(\gamma, n)=\dfrac{\gamma-2\log n}{4(2m^\ast+1)}.
\end{align}
We have the Conditions \ref{cond.1} are satisfied with  
\begin{align*}
p_1(\gamma, n)=2\exp\left(-\dfrac{\gamma-2\log n}{4}\right),\quad  &p_2(\gamma,n)=\exp\left(-\dfrac{\gamma-2\log n}{16(2m^\ast+1)}\right),\\
p_3(\gamma,n)=\exp\left(-\dfrac{\gamma-8\log (2n)}{4}\right), \quad &p_4(\gamma, n)=\exp\left(-\dfrac{\gamma-(8m^\ast+6)\log (2n)}{16(2m^\ast+1)}\right).
\end{align*}
\end{proposition}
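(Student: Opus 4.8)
The plan is to exploit the explicit Gaussian form of the cost. Writing $Y_t=(x_t-\mu_t)/\sigma\sim\mathcal{N}(0,1)$ i.i.d., every quantity controlled by Condition~\ref{cond.1} is a quadratic form in the $Y_t$, and the cost reductions that appear turn out to be (maxima of) chi-squared statistics. Concretely, on an interval with constant true mean $\theta^\ast$, minimising out the fitted mean gives $\Loss(x_{s:e};\varnothing)=\sum_{t=s}^e(x_t-\bar{x}_{s:e})^2/\sigma^2$, and for any fixed segmentation $\tau_{1:k}$ of that interval into segments $I_1,\dots,I_{k+1}$ of lengths $n_1,\dots,n_{k+1}$,
\begin{equation*}
\Loss^\ast(x_{s:e})-\Loss(x_{s:e};\tau_{1:k})=\sum_{j=1}^{k+1}\frac{n_j(\bar{x}_{I_j}-\theta^\ast)^2}{\sigma^2}\sim\chi^2_{k+1},
\end{equation*}
the distributional statement holding because the segment means are computed from disjoint data and are therefore independent. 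My whole argument reduces each of the four inequalities to a tail bound for such statistics together with a union bound over segmentations.

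For scenario~(i) I would treat the two bounds separately. The second is immediate: with no true change, $\Loss^\ast(x_{t+1:t+n})-\Loss(x_{t+1:t+n};\varnothing)=n(\bar{x}-\theta^\ast)^2/\sigma^2\sim\chi^2_1$, so a single chi-squared tail delivers $p_2$. For the first bound I rewrite the bad event as the existence of $k\ge1$ and locations $\tau_{1:k}$ with $\Loss^\ast-\Loss(x;\tau_{1:k})\ge k\gamma-a(\gamma,n)$; by the display above this is $\{\chi^2_{k+1}\ge k\gamma-a(\gamma,n)\}$ for each fixed segmentation, so a union bound over the $\binom{n-1}{k}\le n^k$ placements followed by a sum over $k\ge1$ controls the probability.

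For scenario~(ii) the key device is to split the length-$2n$ interval at the true change $t+n$. Super-additivity of $\Loss$ in (\ref{lem.property.1}) and additivity of $\Loss^\ast$ in (\ref{lem.property.2}) give
\begin{equation*}
\Loss^\ast-\Loss(x_{t+1:t+2n};\tau_{1:k})\le\left[\Loss^\ast(x_{t+1:t+n})-\Loss(x_{t+1:t+n};\tau_{1:k})\right]+\left[\Loss^\ast(x_{t+n+1:t+2n})-\Loss(x_{t+n+1:t+2n};\tau_{1:k})\right],
\end{equation*}
where each of the two halves contains no true change. This reduces the first bound of~(ii) to the chi-squared/union-bound analysis of~(i) applied to two no-change regions, which explains both the $(k-1)$ penalty and the larger $\log(2n)$ constant appearing in $\gamma^{(2)}_n$ and $p_3$. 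For the final bound I write $\Loss^\ast-\min_{\tau_1}\Loss(x;\tau_1)=\max_{\tau_1}[\Loss^\ast-\Loss(x;\tau_1)]$ and apply the same split: for each candidate $\tau_1$ one half reduces by a $\chi^2_1$ and the other by a $\chi^2_2$-type term, and a union bound over the at most $2n-1$ positions of $\tau_1$ produces $p_4$ with its extra logarithmic factor.

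The main obstacle is purely quantitative: making the combinatorial factor $n^k$ (contributing $k\log n$ to the exponent) be absorbed by the penalty contribution $k\gamma$ while the residual sum over $k$ still converges to exactly the stated $p_1,\dots,p_4$ and leaves the guaranteed surplus $a(\gamma,n)$. A crude Chernoff bound at rate $1/4$ needs $\gamma\gtrsim4\log n$ and is too lossy; instead I would use the Laurent--Massart inequality $\p(\chi^2_d\ge d+2\sqrt{dx}+2x)\le e^{-x}$, whose cross term $2\sqrt{dx}$ is precisely what forces the $\sqrt{\log n}$ correction $8\sqrt{16+2\log n}+32$ inside $\gamma^{(1)}_n$ and its analogue in $\gamma^{(2)}_n$. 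Tracking these constants through the geometric sum over $k$, and arranging the $(2m^\ast+1)$ scaling in $b(\gamma,n)$, $p_2$ and $p_4$ so that the Theorem's hypothesis $a(\beta,T)>2m^\ast b(\beta,T)$ holds automatically, is the only genuinely delicate part of the argument.
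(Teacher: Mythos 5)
Your proposal is correct and follows essentially the same route as the paper's own proof: the supplement's Lemmas \ref{lemma.addk.A} and \ref{lemma.addk2.A} likewise reduce all four bounds of Condition \ref{cond.1} to exact chi-squared identities for fixed segmentations, a Bonferroni bound over the at most $\binom{n}{k}\le n^k$ placements and over $k\ge 1$, and the Laurent--Massart/Birg\'e tail inequality (Lemma \ref{lemma.chi-square}), with precisely your constant bookkeeping that the $8\sqrt{16+2\log n}+32$ term in $\gamma^{(1)}_n$ exists to absorb the cross term $\sqrt{k(2x-k)}$ so the geometric sum over $k$ survives. Your one departure in scenario (ii) --- splitting the length-$2n$ region at $\tau^\ast$ via Lemma \ref{lem:1} rather than, as the paper does, inserting $\tau^\ast$ into the fitted segmentation so that $\Loss^\ast(\bS)-\Loss(\bS;\tau_{1:k})\le\Loss^\ast(\bS)-\Loss(\bS;\tau_{1:k},\tau^\ast)\sim\chi^2_{k+2}$ --- is the identical manipulation for this unconstrained cost, and your $\chi^2_3$ accounting in the $p_4$ step is if anything more careful than the paper's stated $\chi^2_2$ (at the cost of a marginally larger constant, immaterial since the paper's constants are avowedly unoptimised).
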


\begin{proposition}\lbl{prop:mean2} Let $\mS(\Delta, n)=n\Delta^2/2$, we have Condition \ref{cond.2} is satisfied if $\mS(\Delta, n)/4\ge z\ge 5$, with $p_5\big({\mS(\Delta,n)},{z}\big)=2\exp\left(-z/20\right).$ 
\end{proposition}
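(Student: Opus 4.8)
The plan is to reduce the left-hand side of Condition~\ref{cond.2} to an explicit functional of two independent standard Gaussians, and then apply elementary tail bounds. Fix $t$ and a model in $\mathcal{B}_{n,\Delta}$, so that $x_{t+1:t+2n}$ has a single change at $t+n$ with pre- and post-change means $\theta'_1,\theta'_2$ satisfying $|\theta'_1-\theta'_2|=\Delta$; write $d=\theta'_1-\theta'_2$. Since no constraint is imposed in the change-in-mean cost, $\Loss(x_{t+1:t+2n},\varnothing)$ is attained at the overall sample mean $\bar{x}$, while $\Loss^\ast(x_{t+1:t+2n})$ uses $\theta'_1$ on the first half and $\theta'_2$ on the second. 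Writing $\bar{x}_1,\bar{x}_2$ for the two half-sample means and applying the decomposition $\sum(x_s-\theta)^2=\sum(x_s-\bar{x}_{\cdot})^2+(\text{count})(\bar{x}_{\cdot}-\theta)^2$ within each half and across the whole block, the within-half residual sums of squares cancel, leaving the closed form
\begin{align*}
\Loss(x_{t+1:t+2n},\varnothing)-\Loss^\ast(x_{t+1:t+2n})
=\frac{1}{\sigma^2}\left[\frac{n}{2}(\bar{x}_1-\bar{x}_2)^2-n(\bar{x}_1-\theta'_1)^2-n(\bar{x}_2-\theta'_2)^2\right].
\end{align*}

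Next I would substitute $\bar{x}_i-\theta'_i=\bar\eps_i$ (the mean of the $n$ Gaussian errors on half $i$) and $\bar{x}_1-\bar{x}_2=d+(\bar\eps_1-\bar\eps_2)$. Expanding and cancelling the $\bar\eps_i^2$ contributions collapses the bracket to $\tfrac{nd^2}{2}+nd(\bar\eps_1-\bar\eps_2)-\tfrac{n}{2}(\bar\eps_1+\bar\eps_2)^2$. Setting $U=\sqrt{n}(\bar\eps_1-\bar\eps_2)/(\sqrt2\,\sigma)$ and $V=\sqrt{n}(\bar\eps_1+\bar\eps_2)/(\sqrt2\,\sigma)$, which are independent $\cN(0,1)$ variables (their covariance is $\var(\bar\eps_1)-\var(\bar\eps_2)=0$ and they are jointly Gaussian), and recalling $\mS(\Delta,n)=n\Delta^2/2$, this becomes
\begin{align*}
\Loss(x_{t+1:t+2n},\varnothing)-\Loss^\ast(x_{t+1:t+2n})=\mS(\Delta,n)+2\sqrt{\mS(\Delta,n)}\,U-V^2,
\end{align*}
where I absorb the sign of $d$ into $U$ and take $\sigma=1$ (equivalently, measure $\Delta$ in noise units). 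Writing $S=\mS(\Delta,n)$, the target probability is $\p(S+2\sqrt{S}\,U-V^2\le z)=\p(V^2-2\sqrt{S}\,U\ge S-z)$, and the hypothesis $z\le S/4$ guarantees $S-z\ge 3S/4>0$.

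Finally I would control this via a union bound splitting the deficit $S-z$ evenly between the chi-squared and Gaussian contributions:
\begin{align*}
\p\big(V^2-2\sqrt{S}\,U\ge S-z\big)\le \p\Big(V^2\ge\tfrac12(S-z)\Big)+\p\Big(-2\sqrt{S}\,U\ge\tfrac12(S-z)\Big).
\end{align*}
The Gaussian term is bounded by $\exp(-(S-z)^2/(32S))$ via $\p(U\ge a)\le e^{-a^2/2}$; using $S-z\ge 3S/4$ and $S\ge 4z$, this exponent is at least $9z/128\ge z/20$. The chi-squared term is bounded by $\sqrt2\,e^{-(S-z)/8}$ via the Chernoff bound $\p(V^2\ge x)\le(1-2\lambda)^{-1/2}e^{-\lambda x}$ at $\lambda=1/4$; since $S-z\ge 3z$ this is at most $\sqrt2\,e^{-3z/8}$, and one checks $\sqrt2\,e^{-3z/8}\le e^{-z/20}$ for $z\ge 5$. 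Summing gives $2\exp(-z/20)$, and $p_5(x,y)=2e^{-y/20}\to0$ as $y\to\infty$, as Condition~\ref{cond.2} demands.

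The computations are routine; the only genuinely delicate point is this last step, where the two hypotheses do exactly the work needed. The requirement $z\le\mS(\Delta,n)/4$ makes the deficit $S-z$ a fixed fraction of both $S$ and $z$, so that each tail decays at a rate linear in $z$; and $z\ge 5$ is precisely what absorbs the $\sqrt2$ prefactor of the sub-exponential tail into the clean exponent $z/20$. The even split $\alpha=\tfrac12$ is convenient but not essential, so the main obstacle is merely the bookkeeping required to land on the stated constants $20$ and $5$.
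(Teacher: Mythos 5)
Your proof is correct and lands on exactly the constants in the statement, but the machinery differs from the paper's own argument (Lemma \ref{lemma.drop1.A} in the supplement). The paper never writes the cost difference in closed form: it works relative to $\Loss(\bS;\tau^\ast)$, records the two distributional facts $\Loss(\bS;\varnothing)-\Loss(\bS;\tau^\ast)\sim\chi^2_1(\nu)$ with $\nu=n\Delta^2/2$ and $\Loss^\ast(\bS)-\Loss(\bS;\tau^\ast)\sim\chi^2_2$, applies the union bound $\p(A-B\le z)\le \p(A\le 2z)+\p(B\ge z)$ to these two dependent variables, and then quotes the chi-square tail inequalities of Lemma \ref{lemma.chi-square}, in particular the non-central lower-tail bound. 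Your closed form $S+2\sqrt{S}\,U-V^2$ is precisely $(\sqrt{S}+U)^2-(U^2+V^2)$, i.e.\ the paper's non-central $\chi^2_1(\nu)$ minus its $\chi^2_2$ with the dependence made explicit, so the shared $U^2$ cancels before any probability is estimated; after that you need only a Gaussian tail bound and a one-degree-of-freedom Chernoff bound, with the deficit $S-z$ split evenly between the two terms. What each approach buys: yours is self-contained (no non-central chi-square machinery) and structurally a bit cleaner, since the dependent components cancel exactly rather than being given away in the union bound; the paper's version is the template that transfers verbatim to the change-in-slope and spike-decay applications (Lemmas \ref{lemma.drop1.B} and \ref{lemma.drop1.C}), where the same pattern of a non-central $\chi^2_1$ against a small central chi-square reappears through basis or projection arguments, and an explicit scalar reduction like yours would be messier. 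Both routes exploit the hypotheses in the same way: $z\le \mS(\Delta,n)/4$ keeps the deficit a fixed fraction of both $S$ and $z$ so each tail decays linearly in $z$, and $z\ge 5$ absorbs the constant prefactors into the final bound $2\exp(-z/20)$.
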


 In this application the data mechanism is uniform at all time-point, therefore the probability is the same for all elements in $\{\bS_{1,n}(t)\}$, $\{\bS_{2,n}(t)\}$ and $\{\bS_{\Delta,n}(t)\}$. Here we remark that the above propositions illustrate as an example that Conditions \ref{cond.1} and \ref{cond.2} are satisfied for the change-in-mean problem,  where the constants in $\gamma_n^{(i)}, a(\gamma,n)$ and $p_j(\gamma,n)$, where $i=1,2$ and $j=1,\dots,5$, are not optimised. Note that by Proposition \ref{prop:mean1} we always have $a(\gamma, n)> 2m^\ast b(\gamma, n)$. 

\begin{thm}\lbl{thm:mean}  If $\beta=(2+\epsilon)\log T$, where $\epsilon>0$ is an arbitrarily small constant; and if 
\begin{align}
\delta_T\Delta_T^2\ge (16+10\epsilon)\log T \mbox{ and }
\Delta_T^2\ge \dfrac{(32+20\epsilon)\log T}{T^{1/(4m^\ast+3)}},
\end{align}
then for large enough $T$, 
 with probability at least $1-(7m^\ast+3) T^{-\epsilon/(32m^\ast+16)}$, 
\begin{align}\lbl{eq:consist.mean}
&\hat{m}=m^\ast,\max\limits_{j=1,\dots,m^\ast}|\htau_j-\tau_j|\Delta_j^2\le (16+10\epsilon)\log T.
\end{align}
\end{thm}

This result can be viewed as a finite-sample version of the existing consistency results for the change-in-mean problem \citep{yao1978estimating, tickle2018parallelisation}. In a recent work, \cite{wang2018optimal} provides a similar finite-sample result for change-in-mean problem and shows that the localisation rate in (\ref{eq:consist.mean}) is minimax optimal. In comparison, the extra condition on $\Delta_T$ and the inclusion of $m^\ast$ in the convergence rate in our Theorem \ref{thm:mean} is due to the fact that we specify the value of penalty to be $2+\epsilon$, other than a lack-of-track large constant, times $\log T$. The following corollary gives consistency for a specific asymptotic regime.

\begin{corollary}\lbl{coro:mean}
If $\beta=(2+\epsilon)\log T$, where $\epsilon>0$ is an arbitrarily small constant, assume that  $m^\ast=o\big(\log T\big)$, $\delta_T\Delta_T^2\ge c_1\log T$ and $\Delta_T^2\ge c_2 T^{-1/(4m^\ast+3)} \log T $,  we have  $$\p\bigg(\hat{m}=m^\ast,\max\limits_{j=1,\dots,m^\ast} |\htau_j-\tau_j|\Delta_j^2\le c_1\log T\bigg)\ra 1$$ 
as $T\ra \infty$, where $c_1$ and  $c_2$  are absolute constants that only depend on $\epsilon$.
\end{corollary}

For the standard in-fill asymptotic regime with a fixed number of true changes $m^\ast$ and constant size of minimum jump in the signals, i.e., $\Delta_T=O(1)$, from above corollary we can obtain a changepoint localisation rate of $O_p(\log T)$, which is the same order as in the classic results \citep{yao1978estimating}. Note that this order on accuracy of the changepoint locations could be further improved to basically $O_p(1)$ using the argument in \cite{yao1989least}. Also, similar improved accurary of consistent estimates is given in \cite{boysen2007scale} and \cite{boysen2009consistencies} for piecewise constant least
squares regression and general class of functions based on the solution to an $L_0$
least squares (Potts functional), under the assumption of the piecewise-constant mean function, fixed number of change points $m$, constant size of the minimum jump in the signal $\Delta_T$ and minimum segment length $\delta_T$ is of order $T$.

In this paper, we have a slight generalisation of these existing consistency results as we allow all the parameters $m$, $\delta_T$ and $\Delta_T$ to change
with the sample size $T$. Our localisation rate also mathches the minimax optimal rate. Note that some extra restrictions on $m^\ast$ (only slowly diverging with $T$) and $\Delta_T$ (lower bounded) is posed in our results due to the use of the argument of constructing global consistency from local region conditions, where in \cite{wang2018optimal} and \cite{Verzelen_2020} they are actually not required.

Finally, we remark that  similar results to Theorem \ref{thm:mean} and Corollary \ref{coro:mean} could be attained if we relax the Gaussianity assumption on noise to having an exponential or lighter tail. This can be done by adapting the chi-square bound in Lemma \ref{lemma.chi-square} to sub-gamma concentration inequalities, see similar argument in \cite{baranowski2016narrow}.  By taking a robust cost function and penalisation term $\beta$, the general framework should also work for the mean plus heavy-tailed noises model as considered in \cite{fearnhead2019changepoint}.

\subsection{Change-in-slope problem}\lbl{sec:change-in-slope}

For the change-in-slope application, we have the following decomposition of observations:
\begin{align}
x_t=f_t+\eps_t, \quad 1\le t\le T
\end{align}
 where $\eps_t\sim \cN(0,\sigma^2)$ are i.i.d Gaussian noises, and $f_t$ denote the piecewise linear mean signals, that is, for $j=1,\dots,m^\ast+1$:
 \begin{align}
 f_t=\theta^\ast_{j-1}+\dfrac{\theta^\ast_{j}-\theta^\ast_{j-1}}{\tau^\ast_{j}-\tau^\ast_{j-1}}(t-\tau^\ast_{j-1}),\quad \tau^\ast_{j-1}+1\le t\le \tau^\ast_j,
 \end{align}
In the above parameterisation, $\theta_{0:m^\ast+1}^\ast$ are values of the linear function at the changes $\tau_{0:m^\ast+1}$. As a consequence, this means we directly introduce the continuity constraint that enforces the value at the end of one segment to be equal to the value at the start of next segment.

We take the negative log-likelihood as the cost function, thus to fit a set of points $x_{s:e}$ such that  $\tau_{j-1}+1\le s< e\le \tau_j$, with the same bivariate structure parameter $\btheta=(\theta^{(1)},\theta^{(2)})$, the cost function is as follows:
\begin{align}
\mC(x, {s, e};\,\btheta)=\sum_{t=s}^e\dfrac{1}{\sigma^2}\left[x_t-\theta^{(2)}-\dfrac{\theta^{(1)}-\theta^{(2)}}{\tau_{j}-\tau_{j-1}}(t-\tau_{j-1})\right]^2.
\end{align}

\noindent Due to requirement of of continuity at the changepoint, we need a constrained minimisation on the corresponding overall cost function for fitting a a set of changes $\tau_{1:m}$, i.e., $\theta_j^{(2)}=\theta_{j-1}^{(1)}$ for all $j=1,\dots,m$ and $\btheta_j=\left(\theta_j^{(1)},\theta_j^{(2)}\right)$. Therefore the overall cost function takes the following form:
\begin{align}
\Loss(x_{1:T},\tau_{1:m})=&\min_{\btheta_{1:m+1}, \theta_j^{(2)}=\theta_{j+1}^{(1)}}\sum_{j=1}^{m+1}\mC\left(x_{\tau_{j-1}+1:\tau_j}; \btheta_j\right)\nonumber\\
=&\min_{\theta_{0:m+1}}\sum_{j=1}^{m+1} \sum_{t=\tau_{j-1}+1}^{\tau_{j}}\dfrac{1}{\sigma^2}\left[x_t-\theta_{j-1}-\dfrac{{\theta_{j}}-\theta_{j-1}}{\tau_{j}-\tau_{j-1}}(t-\tau_{j-1})\right]^2.
\end{align}

\noindent In this application, we define the size of change at the $j$-th changepoint, $\tau^\ast_j$, as
 \begin{align}
 \Delta_j=\mbox{dist}(\btheta^\ast_{j+1}, \btheta^\ast_{j})=\bigg\lvert\dfrac{\theta^\ast_{j+1}-\theta^\ast_{j}}{\tau^\ast_{j+1}-\tau^\ast_j}-\dfrac{\theta^\ast_{j}-\theta^\ast_{j-1}}{\tau^\ast_{j}-\tau^\ast_{j-1}}\bigg\rvert,
\end{align}
which is the absolute difference of slopes in two consecutive segments. Again, the nuisance noise variance $\sigma^2$, if unknown, can be robustly estimated \citep{fearnhead2018detecting}, for example we can set:
$$
\hat\sigma=\dfrac{\mbox{Median}\{|x_1-2x_2+x_3|, \dots, |x_{T-2}-2x_{T-1}+x_T|\}}{\sqrt{6}\Phi^{-1}(3/4)}.
$$


In order to study the property of the changepoint detection for this application, we first need to verify Conditions \ref{cond.1} and \ref{cond.2} are satisfied, which is shown is the following propositions. 

\begin{proposition}\lbl{prop:slope1} 
Consider following choice of $\gamma_n^{(1)}$ and $\gamma_n^{(2)}$:
\begin{align*} 
 \gamma_{n}^{(1)}&=\max\left\{(2+\epsilon)\log n, 2\log n+\kappa_{2,1}\sqrt{\log n}, \, 2\log n+\kappa_{2,2}m^\ast\right\},\\
\gamma_n^{(2)}&=\max\bigg\{(3+\epsilon)\log (2n), 2\log(2n)+\kappa_{2,3}\log(\log(2n)),   2\log(2n)+\kappa_{2,4}m^\ast\bigg\},
\end{align*}
for some large enough constants $\kappa_{2,1}, \kappa_{2,2}, \kappa_{2,3}, \kappa_{2,4}$, and $\epsilon$ is an  positive constant that can be arbitrarily small. Moreover, let  
\begin{align}
a(\gamma,n)=\dfrac{\gamma-2\log n}{6} \quad\mbox{ and }\quad  b(\gamma, n)=\dfrac{\gamma-2\log n}{6(2m^\ast+1)}.
\end{align}
We consequently find that Conditions \ref{cond.1} is satisfied by:  
\begin{align*}
p_1(\gamma, n)=2\exp\left(-\dfrac{\gamma-2\log n}{6}\right), \quad &p_2(\gamma,n)=\exp\left(-\dfrac{\gamma-2\log n}{24(2m^\ast+1)}\right),\\  
p_3(\gamma,n)=\dfrac{9}{4}\exp\left(-\dfrac{\gamma-3\log (2n)}{3} \right), \quad &p_4(\gamma, n)=\exp\left(-\dfrac{\gamma-2\log (2n)}{24(2m^\ast+1)}\right).
\end{align*}

\end{proposition}

\begin{proposition}\lbl{prop:slope2} 
Let $\mS(\Delta, n)=n^3\Delta^2/25$, then Condition \ref{cond.2} is satisfied if $\mS(\Delta, n)/4\ge z\ge 8$ and $n\ge 2$, with $p_5\big({\mS(\Delta,n)},{z}\big)=2\exp\left(-z/20\right).$ 
\end{proposition}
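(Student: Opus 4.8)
The plan is to adapt the argument behind Proposition~\ref{prop:mean2} to the change-in-slope setting, replacing the rank-one projection onto constants by the rank-two projection onto linear functions, which is what produces the cubic-in-$n$ signal scaling $\sigst(\Delta,n)=n^3\Delta^2/25$. After reindexing the window $x_{t+1:t+2n}$ as $x_{1:2n}$ with the single slope change lying between indices $n$ and $n+1$, and taking the noise scale $\sigma=1$ without loss of generality (the cost is normalised by $\sigma^2$), I note that $\Loss(x_{1:2n};\varnothing)$ is exactly the residual sum of squares of regressing the data on the two-dimensional space of linear functions of $t$. Letting $P$ be the orthogonal projection onto that space, we have $\Loss(x_{1:2n};\varnothing)=\|(I-P)x\|^2$, while $\Loss^\ast(x_{1:2n})=\|\eps\|^2$ since the true parameters reproduce the signal $f$ exactly. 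Writing $x=f+\eps$ and using that a piecewise-linear mean with slope jump $\Delta$ equals a global linear function plus $\Delta\,u$ with $u_t=(t-n)_+$, so that $(I-P)f=\Delta(I-P)u$, I obtain the bias--variance decomposition
\begin{equation*}
\Loss(x_{1:2n};\varnothing)-\Loss^\ast(x_{1:2n})=B+L-Q,
\end{equation*}
where $B=\Delta^2\|(I-P)u\|^2$ is deterministic, $L=2\langle (I-P)f,\eps\rangle\sim\cN(0,4B)$ is the cross term, and $Q=\|P\eps\|^2\sim\chi^2_2$ because $P$ has rank two.

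The next step is to show that the deterministic bias dominates the claimed signal strength. I would evaluate the residual of regressing the kink $u_t=(t-n)_+$ on $\{1,t\}$ over $t=1,\dots,2n$ through the closed-form simple-regression identity, which requires the three power sums $\sum_t u_t$, $\sum_t u_t^2$, and $\sum_t(t-\bar t)u_t$. The leading behaviour is $\|(I-P)u\|^2\to n^3/24$, and after bounding the lower-order corrections I would conclude $\|(I-P)u\|^2\ge n^3/25$ for every $n\ge 2$; hence $B\ge n^3\Delta^2/25=\sigst(\Delta,n)$.

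Finally I control the two stochastic terms by a union bound. From the hypothesis $\sigst/4\ge z$ and the bias bound $B\ge\sigst$ we get $B\ge 4z$, so on the complement of $\{L\le -3B/8\}\cup\{Q\ge 3B/8\}$ one has $B+L-Q>B-3B/8-3B/8=B/4\ge z$; thus $\{B+L-Q\le z\}\subseteq\{L\le -3B/8\}\cup\{Q\ge 3B/8\}$. The Gaussian tail bound gives $\p\left(L\le -3B/8\right)\le\exp\left(-9B/512\right)\le\exp\left(-9z/128\right)$, and since $Q\sim\chi^2_2$ obeys $\p(Q\ge q)=e^{-q/2}$ we obtain $\p\left(Q\ge 3B/8\right)=\exp\left(-3B/16\right)\le\exp\left(-3z/4\right)$. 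As both $9/128$ and $3/4$ exceed $1/20$, each probability is at most $\exp(-z/20)$, yielding $p_5\left(\sigst,z\right)=2\exp(-z/20)$; the threshold $z\ge 8$ provides the numerical slack used to absorb the lower-order terms in these estimates.

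I expect the main obstacle to be the bias computation rather than the probabilistic part. Turning the asymptotic value $n^3/24$ into a clean lower bound $\|(I-P)u\|^2\ge n^3/25$ valid for \emph{every} $n\ge 2$ forces careful handling of the three finite power sums and their corrections, and it is precisely the tracking of these constants, together with the constants entering the Gaussian and $\chi^2_2$ tails, that pins down the exact numbers $1/25$, $z/20$, $z\ge 8$ and $n\ge 2$ in the statement. Once the decomposition into a deterministic bias plus an independent Gaussian and a $\chi^2_2$ term is in place, the remaining estimates are routine.
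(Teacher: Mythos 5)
Your argument is correct and reaches the paper's bound, but it groups the randomness differently from the paper, so a comparison is worthwhile. The paper (Lemma \ref{lemma.drop1.B}, resting on Lemma \ref{lem.chi-square.slope}) pivots through the intermediate cost $\Loss(\bS;\tau^\ast)$: it writes $\Loss(\bS;\varnothing)-\Loss^\ast(\bS)$ as the difference of $\Loss(\bS;\varnothing)-\Loss(\bS;\tau^\ast)\sim\chi^2_1(\nu)$ and $\Loss^\ast(\bS)-\Loss(\bS;\tau^\ast)\sim\chi^2_3$, then invokes the non-central lower-tail and upper-tail bounds of Lemma \ref{lemma.chi-square}; the hypothesis $z\ge 8$ is used there precisely to make $\exp(-(z-\sqrt{6z})/2)\le\exp(-z/20)$. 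Your decomposition $B+L-Q$ is the same algebraic identity with a shared term cancelled: in the paper's basis notation, $\chi^2_1(\nu)=(f_{(\tau^\ast)}+\eps_{(\tau^\ast)})^2=B+L+\eps_{(\tau^\ast)}^2$ and $\chi^2_3=Q+\eps_{(\tau^\ast)}^2$, with $\nu=B$. Cancelling $\eps_{(\tau^\ast)}^2$ lets you control $L$ with a plain Gaussian tail and $Q$ with the exact exponential tail of $\chi^2_2$, so you avoid non-central chi-square machinery entirely and, in fact, never need $z\ge 8$ (your exponents $9/128$ and $3/4$ exceed $1/20$ unconditionally). This is a genuinely more elementary route for this one lemma; the paper loses nothing by its route because the orthonormal-basis apparatus of Section \ref{subsec:basis} is needed anyway for Lemmas \ref{lemma.addk.B} and \ref{lemma.addk2.B}.

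The one step you leave as an outline is the deterministic crux: the bound $\|(I-P)u\|^2\ge n^3/25$ is asserted, not proved, and without it the value of $\sigst(\Delta,n)$ is not pinned down. This quantity is exactly the paper's non-centrality parameter, computed in closed form in Lemma \ref{lem.chi-square.slope}: $\nu=\Delta^2\, n(n+1)(n-1)(2n^2+1)/\{12(2n-1)(2n+1)\}$, whose leading term is $\Delta^2 n^3/24$ as you predict, so your computation would go through. Be aware, though, of a blemish you share with the paper: this exact expression is \emph{not} bounded below by $n^3\Delta^2/25$ at $n=2$ (it equals $0.3\,\Delta^2$, versus $8\Delta^2/25=0.32\,\Delta^2$); the inequality holds only for $n\ge 3$. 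So both your argument and the paper's, as written, require either $n\ge 3$ or a slightly larger constant than $25$; this is a defect of the proposition's constants, not a gap in your approach relative to the paper's.
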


Propositions \ref{prop:slope1} and \ref{prop:slope2} have strong similarity with Propositions \ref{prop:mean1} and \ref{prop:mean2} for the change-in-mean problem, where again the constants are not optimised for simplicity purposes.
Based on Theorem \ref{thm.global} we can obtain the following theorem.

\begin{thm}\lbl{thm:slope}  If $\beta=(2+\epsilon)\log T$, where $\epsilon>0$ is an arbitrarily small constant; and if 
\begin{align}
\delta_T^3\Delta_T^2\ge (200+350\epsilon/3)\log T \mbox{ and }
\Delta_T^2\ge \dfrac{(1600+2000\epsilon/3)\log T}{T^2},
\end{align}
then for large enough $T$, 
with probability at least $1-(33m^\ast/4+3) T^{-\epsilon/(48m^\ast+24)}$,
\begin{align}
&\hat{m}=m^\ast,\max\limits_{j=1,\dots,m^\ast}|\htau_j-\tau_j|^{3}\Delta_j^2\le (200+350\epsilon/3)\log T.
\end{align}
\end{thm}

In terms of asymptotics, we have the following corollary.
\begin{corollary}\lbl{coro:slope}
If $\beta=(2+\epsilon)\log T$, where $\epsilon>0$ is an arbitrarily small constant, assume that  $m^\ast=o\big(\log T\big)$, $\delta_T^3\Delta_T^2\ge c_3\log T$ and $\Delta_T^2\ge {c_4}\log T/{T^2}$,  we have  $$\p\bigg(\hat{m}=m^\ast,\max\limits_{j=1,\dots,m^\ast}|\htau_j-\tau_j|^3\Delta_j^2\le c_3\log T\bigg)\ra 1$$ 
as $T\ra \infty$, where $c_3$ and $c_4$ are a absolute constants that only depend on $\epsilon$.
\end{corollary}

For the standard in-fill asymptotic regime with a fixed number of true changes $m^\ast$, we would have $\Delta_T=O(T^{-1})$. In such case we get a bound on the error of location estimates that is just a logarithmic factor worse than the minimax rate of $T^{2/3}$ \citep{raimondo1998sharp}. The results also holds when $m^\ast$ is diverging at a rate slower than $\log T$. 

We emphasis that this result is stronger than previous consistency results of the change-in-slope model derived in \cite{baranowski2016narrow} and \cite{fearnhead2018detecting}, as it specifies the value of the penalty $\beta$ that ensures consistency. This is a non-trivial technical improvement as shown in the Appendix \ref{subsec:basis}. In this case to get tighter results for the choice of penalty, we need to take account of the positive dependency in the reduction of cost of similar segmentations. We need these tighter bounds because for the in-fill asymptotic regime the accuracy of estimating a change-in-slope is polynomial in $T$ rather than logarithmic in $T$. This accuracy impacts, and increases, the number of possible segmentations we can fit to data in our local region that have one changepoint in the middle location. We believed similar arguments can be applied to refine the value of penalty in other changepoint detection problems.


\subsection{Changepoint in spike and exponential decay problem}\lbl{sec:change-in-spike}
In this application, the observations $x_{1:T}$ have an underlying decomposition
$
x_t=c_t+\eps_t,
$
 where $\eps_t\sim \cN(0,\sigma^2)$ are i.i.d. Gaussian innovations, and the mean function, $c_t$, follows a piecewise spike and exponential decay model. That is, for $j=1,\dots, m^\ast+1$, 
$$c_t=\theta^\ast_j\alpha^{t-\tau^\ast_{j-1}-1} , \quad  \tau^\ast_{j-1}+1 \le t\le \tau^\ast_j,$$ where $0<\alpha< 1$ is the decay rate. When $\alpha=1$ this reduces to the change-in-mean problem in Section \ref{sec:change-in-mean}. 

We take the square error loss as the cost function, 
so to fit a set of points $x_{s:e}$ such that $\tau_{j-1}+1\le s< e\le \tau_j$ with the same parameter $\theta_j$,  we have the following cost:
\begin{align}
\mC(x, {s, e};\,\theta_j)=\sum_{t=s}^e\dfrac{1}{\sigma^2} \left(x_t-\theta_j\alpha^{t-\tau_{j-1}-1}\right)^2.
\end{align}

\noindent Therefore, the corresponding cost function is
\begin{align}\lbl{eq.loss.spike}
\Loss(x_{1:T},\tau_{1:m})=\min_{\theta_{j+1}\neq \theta_{j}\alpha^{\tau_j-\tau_{j-1}}}\sum_{j=1}^{m+1} \sum_{t=\tau_{j-1}+1}^{\tau_{j}}\dfrac{1}{\sigma^2}\left(x_t-\theta_j\alpha^{t-\tau_{j-1}-1}\right)^2,
\end{align}
where $\tau_0=0$ and $\tau_{m+1}=T$. In this application, we minimises over $m$, $\tau_{1:m}$ and $\theta_{1:m+1}$ in the penalised cost $\Loss(x_{1:T}; \tau_{1:m})+m\beta$ to estimate the number of changepoints and their positions.

 We define the size of $j$-th changepoint at $\tau^\ast_j$ as
\begin{align}
 \Delta_j=\mbox{dist}(\theta^\ast_{j+1}, \theta^\ast_j)=\vert\theta^\ast_{j+1}-\theta^\ast_{j}\alpha^{\tau^\ast_j-\tau^\ast_{j-1}-1}\rvert,
\end{align}
which is the size of the jump in the signal from the end of $j$-th segment to the beginning of the $(j+1)$-th segment. Note that solving the problem (\ref{eq.loss.spike}) requires knowledge of the decay rate $\alpha$ and the noise variance $\sigma^2$. For methods to estimate these see \cite{jewell2019fast} and \cite{jewell2018exact}.

In the following propositions, we show that local Conditions \ref{cond.1} and \ref{cond.2} are satisfied for this application.
\begin{proposition}\lbl{prop:spike1} 
Let 
\begin{align*}
\gamma_{n}^{(1)}&=\max\left\{(2+\epsilon)\log n, 2\log n+\kappa_{3,1}\sqrt{\log n},  2\log n+\kappa_{3,2}m^\ast\right\},\\
\gamma_{n}^{(2)}&=\max\bigg\{\kappa_{3,3}\log(2n),  2\log(2n)+\kappa_{3,4}m^\ast\bigg\},
\end{align*} 
for some large enough constants $\kappa_{3,1}, \kappa_{3,3}, \kappa_{3,3}, \kappa_{3,4}$, and $\epsilon$ is an  positive constant that can be arbitrarily small. Moreover, let  $a(\gamma,n)=\dfrac{\gamma-2\log n}{4}$ and  $b(\gamma, n)=\dfrac{\gamma-2\log n}{4(2m^\ast+1)}$.
We have that Conditions \ref{cond.1} is satisfied with  
\begin{align*}
p_1(\gamma, n)=2\exp\left(-\dfrac{\gamma-2\log n}{4}\right),\quad  &p_2(\gamma,n)=\exp\left(-\dfrac{\gamma-2\log n}{16(2m^\ast+1)}\right),\\
p_3(\gamma,n)=\exp\left(-\dfrac{\gamma-8\log (2n)}{4}\right), \quad &p_4(\gamma, n)=\exp\left(-\dfrac{\gamma-(8m^\ast+6)\log (2n)}{16(2m^\ast+1)}\right).
\end{align*}
\end{proposition}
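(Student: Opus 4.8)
The plan is to mirror the proof of Proposition~\ref{prop:mean1} for the change-in-mean problem, exploiting the fact that the stated constants $\gamma_n^{(1)},\gamma_n^{(2)},a,b$ and the bounds $p_1,\dots,p_4$ are taken to be \emph{identical} to those there. This is no coincidence: within any segment the spike and exponential-decay cost $\sum_t(x_t-\theta\alpha^{t-\tau_{j-1}-1})^2/\sigma^2$ is a single-parameter Gaussian least-squares fit of the data against the known regressor $\alpha^{t-\tau_{j-1}-1}$, structurally the same as fitting a single constant mean. I would show that the relevant cost reductions therefore have exactly the same $\chi^2$ distributions as in the mean case, so the same tail bounds and constants go through unchanged.

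The key structural fact to isolate first is a nesting property of the geometric regressor: on a region with no true change, restricting the whole-region regressor $\alpha^{t-s}$ to any sub-segment starting at $s'$ yields $\alpha^{s'-s}\alpha^{t-s'}$, a scalar multiple of that sub-segment's own regressor. Consequently the true mean on a no-change region, namely $\theta^*_u\alpha^{t-\tau^*_{u-1}-1}$, lies in the column space of \emph{any} finer fitted segmentation of that region, exactly as the true constant mean lies in the span of any refinement in the mean problem. This implies that for any fitted changepoints $\tau_{1:k}$ inside a no-change region the reduction $\Loss^*(x_{t+1:t+n})-\Loss(x_{t+1:t+n};\tau_{1:k})$ equals $\|P\eps\|^2/\sigma^2$ for an orthogonal projection $P$ of rank $k+1$, and is thus distributed exactly as $\chi^2_{k+1}$, independently of the decay rate $\alpha$ and of the fitted locations. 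The same identity gives $\Loss^*(x_{t+1:t+n})-\Loss(x_{t+1:t+n};\varnothing)\sim\chi^2_1$.

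Given this identity I would establish the four bounds of Condition~\ref{cond.1} as in the mean proof. For the first bound of~(i), I write $\min_{k\ge1,\tau_{1:k}}\{\Loss+k\gamma\}-\Loss^*$ as a penalised maximum of these $\chi^2_{k+1}$ projection terms and control it by a union (maximal) bound over the at most $n$ interior locations available to each fitted change, combined with a $\chi^2$ tail bound; the penalty $k\gamma$ with $\gamma\ge\gamma_n^{(1)}$ absorbs the $k+1$ degrees of freedom, yielding $p_1$ and the choice $a(\gamma,n)=(\gamma-2\log n)/4$. The second bound of~(i) is the one-sided deviation of the single-parameter fit below $\Loss^*$, a single $\chi^2_1$ quantity, giving $b$ and $p_2$. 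Part~(ii), where the region carries one true change at $t+n$, is handled identically after splitting via Lemma~\ref{lem:1} into the two true segments and applying the nesting identity on each; the $m^*$-dependent offsets appearing in $\gamma_n^{(2)}$, $b$ and $p_4$ are the deliberately inflated (order-$m^*$) constants that guarantee the global requirement $a(\beta,T)>2m^*b(\beta,T)$ of Theorem~\ref{thm.global}, and produce $p_3$ and $p_4$.

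\textbf{Main obstacle.} The hard part is the maximal inequality behind the first bounds of~(i) and~(ii): one must control the best penalised over-fit uniformly over all $k$ and all interior configurations simultaneously, not merely a single $\chi^2_{k+1}$ term. I would reduce this to the same combinatorial union bound over partitions used in the change-in-mean argument. The one genuine difference from that argument is reference-point bookkeeping: in the spike cost the exponent references the \emph{preceding} fitted changepoint rather than the region start, so before invoking the projection identity I must re-verify the nesting property segment by segment for each candidate configuration. Once this is checked, the projections are exactly rank-$(k+1)$, the distributions coincide with the mean case, and hence so do all the constants.
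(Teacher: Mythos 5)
Your proposal is correct and follows essentially the same route as the paper: the supplement (Lemmas \ref{lemma.addk.C}--\ref{lemma.drop1.C}) proves exactly your nesting observation by explicit least-squares computation against the geometric regressor, showing $\Loss^\ast(\bS)-\Loss(\bS;\tau_{1:k})\sim\chi^2_{k+1}$ on a no-change region and $\chi^2_{k+2}$ after inserting $\tau^\ast$ and splitting a one-change region into its two true halves, and then invokes verbatim the union-bound and chi-square tail machinery of Lemmas \ref{lemma.addk.A} and \ref{lemma.addk2.A} to recover the identical constants. Your ``reference-point bookkeeping'' concern is precisely what the paper's computation of $\hat\theta_j=\theta^\ast\alpha^{\tau_{j-1}}+\sum_s\eps_s\alpha^{s-\tau_{j-1}-1}/\sum_s\alpha^{2(s-\tau_{j-1}-1)}$ resolves, so no gap remains.
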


\begin{proposition}\lbl{prop:spike2} Let $\mS(\Delta, n)=\dfrac{\Delta^2}{(1-\alpha^{2n})(1-\alpha^2)}$, then Condition \ref{cond.2} is satisfied if $\mS(\Delta, n)/4\ge z\ge 5$, with $$p_5\bigg({\mS(\Delta,n)},{z}\bigg)=2\exp\left(-\dfrac{z}{20}\right).$$ 
\end{proposition}

Propositions \ref{prop:spike1} and \ref{prop:spike2} is quite similar to Propositions \ref{prop:mean1} and \ref{prop:mean2} for the change-in-mean problem. The only difference lies in the form of signal strength. Based on Theorem \ref{thm.global} we can obtain the following theorem.

\begin{thm}\lbl{thm:spike}  If $\beta=(2+\epsilon)\log T$, where $\epsilon>0$ is an arbitrarily small constant; and if we have
\begin{align}\label{spike.require.1}
\dfrac{\Delta_T^2}{(1-\alpha^{2\delta_T})(1-\alpha^2)}\ge (8+5\epsilon)\log T
\end{align}
and 
\begin{align}\label{spike.require.2}
\log_{\alpha}\left(1-\dfrac{\Delta_T^2}{(1-\alpha^2)(8+5\epsilon)\log T}\right)\le T^{2/(8m^\ast+6+\epsilon)},
\end{align}  
 then for large enough $T$, 
with probability at least $1-(7m^\ast+3)T^{-\epsilon/(32m^\ast+16)}$
\begin{align}
\hat{m}=m, \min\limits_{j=1,\dots,m^\ast} \dfrac{\Delta_j^2}{\left(1-\alpha^2\right)\left(1-\alpha^{2|\htau_j-\tau^\ast_j|}\right)}\ge (8+5\epsilon)\log T  .
\end{align}
\end{thm}

In terms of asymptotic regime, note that the signal strength in this application $\mS(\Delta, n)=O(\Delta^2)$ will become finite if $\alpha$ is fixed. A more natural asymptotic regime is to assume that we are able to obtain data at a higher frequency, which would correspond to $\alpha \ra 1$ as $T \ra \infty$. 
\begin{corollary}\lbl{coro:spike}
If $\beta=(2+\epsilon)\log T$, where $\epsilon>0$ is an arbitrarily small constant. Assume that $c_5\log T\le \dfrac{\Delta_T^2}{(1-\alpha^{2\delta_T})(1-\alpha^2)}$, and there exists universal positive constant $D<1$ such that $\log_{\alpha}D \le T^{2/(8m^\ast+6+\epsilon)}$. Let
$m^\ast=o\big(\log T)$ and $1-\alpha\ge c_6(\Delta_T^2/\log T)$, we have  $$\p\bigg(\hat{m}=m^\ast, \min\limits_{j=1,\dots,m^\ast} \dfrac{\Delta_j^2}{\left(1-\alpha^2\right)\left(1-\alpha^{2|\htau_j-\tau^\ast_j|}\right)}\ge c_{5}\log T\bigg)\ra 1$$ 
as $T\ra \infty$, where $c_5$ and $c_{6}$ are absolute constants only depends on $\epsilon$.
\end{corollary}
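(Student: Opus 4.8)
The plan is to derive the corollary directly from the finite-sample bound of Theorem~\ref{thm:spike}. Concretely, I would show that, for all sufficiently large $T$, the stated assumptions imply the two structural hypotheses (\ref{spike.require.1}) and (\ref{spike.require.2}) of that theorem (for a suitable choice of the absolute constants $c_5,c_6$ depending only on $\epsilon$ and the universal constant $D$), and then argue that the probability lower bound furnished by the theorem tends to $1$ under the asymptotic regime $m^\ast=o(\log T)$. This mirrors exactly the way Corollary~\ref{coro:mean} and Corollary~\ref{coro:slope} are obtained from Theorems~\ref{thm:mean} and \ref{thm:slope}.

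Verifying (\ref{spike.require.1}) is immediate: taking $c_5=8+5\epsilon$, the signal-strength assumption $c_5\log T\le \Delta_T^2/[(1-\alpha^{2\delta_T})(1-\alpha^2)]$ is literally (\ref{spike.require.1}), and the conclusion of Theorem~\ref{thm:spike} then coincides with the claimed conclusion. The substantive step is (\ref{spike.require.2}). Here I would use $1-\alpha^2=(1-\alpha)(1+\alpha)\ge 1-\alpha\ge c_6\Delta_T^2/\log T$ to bound the fraction inside the logarithm,
\begin{align*}
\frac{\Delta_T^2}{(1-\alpha^2)(8+5\epsilon)\log T}\le \frac{1}{c_6(8+5\epsilon)},
\end{align*}
so that choosing $c_6\ge 1/[(8+5\epsilon)(1-D)]$ forces the argument $1-\Delta_T^2/[(1-\alpha^2)(8+5\epsilon)\log T]$ to lie in $[D,1)$. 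Since $0<\alpha<1$, the map $x\mapsto \log_\alpha x$ is decreasing on $(0,1)$, so this argument being at least $D$ yields $\log_\alpha(\cdot)\le \log_\alpha D$, and the assumed bound $\log_\alpha D\le T^{2/(8m^\ast+6+\epsilon)}$ then gives (\ref{spike.require.2}). The role of the assumption $1-\alpha\ge c_6\Delta_T^2/\log T$ is precisely to keep the argument of $\log_\alpha$ positive, bounded away from $0$, and below $1$, which is what prevents the left-hand side of (\ref{spike.require.2}) from blowing up.

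It remains to check that the probability bound $1-(7m^\ast+3)T^{-\epsilon/(32m^\ast+16)}$ converges to $1$. Writing the error term as $\exp\{\log(7m^\ast+3)-\epsilon\log T/(32m^\ast+16)\}$, the hypothesis $m^\ast=o(\log T)$ makes $32m^\ast+16=o(\log T)$, so the negative contribution $\epsilon\log T/(32m^\ast+16)\to\infty$ and dominates the linear prefactor through $\log(7m^\ast+3)$; hence the error term tends to $0$ and the probability to $1$, as required.

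The only genuine obstacle is the handling of (\ref{spike.require.2}): one must track that $\log_\alpha$ is decreasing, verify that the argument is positive and bounded away from both endpoints of $(0,1)$, and reconcile the two constants $c_6$ and $D$ so that $c_6$ can be taken to depend only on $\epsilon$. Everything else is a routine translation of the finite-sample inequality of Theorem~\ref{thm:spike} into its asymptotic form.
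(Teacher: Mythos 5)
Your proposal is correct and follows essentially the same route as the paper's own proof: the paper likewise takes $c_5\ge 8+5\epsilon$ and $c_6^{-1}\le (1-D)c_5$ (equivalent to your $c_6\ge 1/[(8+5\epsilon)(1-D)]$) so that the hypotheses (\ref{spike.require.1}) and (\ref{spike.require.2}) of Theorem~\ref{thm:spike} hold, and then lets the finite-sample bound $1-(7m^\ast+3)T^{-\epsilon/(32m^\ast+16)}$ tend to $1$ under $m^\ast=o(\log T)$. Your explicit monotonicity argument for $\log_\alpha$ together with the bound $1-\alpha^2\ge 1-\alpha\ge c_6\Delta_T^2/\log T$ merely spells out the details that the paper compresses into the sentence ``since $\alpha^{T^{2/(8m^\ast+6+\epsilon)}}\le D<1$, we have (\ref{spike.require.1}) and (\ref{spike.require.2}) hold.''
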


For the standard in-fill asymptotic regime with a fixed number of true changes $m^\ast$, we would have $\Delta_T=O(1)$. In such case if $\alpha=\exp(-c/T)$ for some positive constant $c$, we get a bound on the error of changepoint location estimate, $\max_j|\htau_j-\tau^\ast_j|$, of order not greater than $\log T$ with probability going to $1$.

\section{Discussion}\label{sec:discussion}

In this paper our key motivation is to show that for a class of changepoint methods, statistical properties for detecting multiple changepoints using penalised cost approaches can be derived from the behaviour of the method when analysing data with either no changepoint or a single changepoint, i.e., on the so-called local region. Therefore, we propose a general framework to prove consistency results of a broad class of penalised cost approaches for detecting multiple changepoints, only assuming the properties on the local regions.

These  properties on the local regions are often easier to verify; for example the results for the three applications we considered in Section \ref{sec:application} almost all follow from bounds on chi-squared random variables and the use of a Bonferroni correction. The one exception is for the change-in-slope model where to get sharper results on the choice of penalty, we need to consider the dependency between the cost of fitting similar segmentations. The related techniques are presented in Appendix \ref{subsec:basis}, which we believe to be of independent interest.

Our focus on the theoretical aspects of different types of change, such as of change-in-slope and spike plus exponential decay problem, without carrying out numerical studies is due to the fact that fast computational algorithms based on dynamic programming and (functional) pruning have been well-developed, and their numerical performance and applications to real-world data have been extensively analysed in the literature. Using the technique of local region conditions, this paper closes the theoretical gap in these applications. More importantly, the technique provides a powerful tool to derive consistency results for many other multiple changepoint applications.

In additiona to the applications considered in Section \ref{sec:application}, the general framework can be readily applied to more complicated cases, for example, the signal within the segments is non-linear \citep{yu2022localising},  structure changes is in the moments or quantiles \citep{fisch2022linear,baranowski2016narrow}, and other structure changes such as and autocovariance in time series. Moreover, as mentioned earlier the gaussianity condition on the noise can be relaxed as well \citep{fearnhead2019changepoint}.

The theoretical results given in this paper could be further improved in two regards. First the constants we obtained have not been optimised. More importantly, as mentioned earlier the results on accuracy of the changepoint locations could be improved using arguments similar to \cite{yao1989least} and \cite{}. The idea is to leverage the result that shows we accurately estimate the location of $\tau_{j-1}$ and $\tau_{j+1}$ and then show that the error in estimating $\tau_j$ converges in distribution to the accuracy of estimating the location of a single changepoint from data in the region between $\tau_{j-1}$ and $\tau_{j+1}$.

\appendix

\section{Proofs of Section 2}\label{app}

\textit {Proof of Lemma \ref{lem:1}.} 
It is trivial to verify (\ref{lem.property.2}) as once we fix the segment parameters the cost, $\Loss^\ast(\cdot)$, is additive over data-points.  

Property  (\ref{lem.property.1}) follows because we are minimising the cost on the left-hand side over a more constrained space for the segment parameters. If $r$ is not a changepoint location, then
\begin{align*} 
&\Loss(x_{s:r};\tau_{u:v})+\Loss(x_{r+1:e};\tau_{u:v})\\
=&\min_{\btheta_{u:k-1},\btheta_k^{(1)}}\left(\sum_{j=u}^{k-1}\mC\left(x_{\tau_{j-1}+1:\tau_j};\btheta_{j}\right)+\mC\left(x_{\tau_{k-1}+1:r};\btheta^{(1)}_{k}\right)\right)\\
&+\min_{\btheta_{k}^{(2)}, \btheta_{k+1:v}}\left(\mC\left(x_{r+1:\tau_k};\btheta_{k}^{(2)}\right)+\sum_{j=k+1}^v\mC\left(x_{\tau_{j-1}+1:\tau_j};\btheta_{j}\right)\right)
\\
\le &\min_{\btheta_{u:v}} \Biggl(\sum_{j=u}^{v}\mC\left(x_{\tau_{j-1}+1:\tau_j};\btheta_{j}\right) \Biggr)\\
=& \Loss(x_{s:e};\tau_{u:v}),
\end{align*}
where all minimisations include any constraints on segment parameters in neighbouring segments. The inequality comes from the fact that for $\Loss(x_{s:r};\tau_{u:v})+\Loss(x_{r+1:e};\tau_{u:v})$ we have no constraint between $\btheta_k^{(1)}$ and $\btheta_k^{(2)}$, but for $\Loss(x_{s:e};\tau_{u:v})$ we have $\btheta_k^{(1)}=\btheta_k^{(2)}:=\btheta_k$. A similar argument applies if $r$ is a changepoint, as $\Loss(x_{s:r};\tau_{u:v})+\Loss(x_{r+1:e};\tau_{u:v})$ will not apply any constraint between the segment parameters for the segments immediately before and after $r$. \qed

\medskip

\noindent\textit{Proof of Theorem \ref{thm.global}.}
Consider a split of the data as (\ref{eq:m-split}) for the specified $n_{1:m^\ast}$. Let $l_j$ be the number of data in region $\bS_j$, therefore $l_{2k+1}=\delta_k-n_k-n_{k+1}$ and $l_{2k}=2n_k$. We define an event, $E_1$, based on this split, such that the following holds jointly for regions $\bS_{1:2m^\ast+1}$: \\
if $j$ is odd, 
$$
\min_{k\ge 1,\tau_{1:k}}\{\Loss\left(\bS_j;\tau_{1:k}\right)+k\beta\}-\Loss^\ast\left(\bS_j\right)\ge a\left(\beta, l_j\right), \mbox{ and }
$$
$$
\Loss^\ast(\bS_j)- \Loss(\bS_j;\varnothing) \leq b(\beta,l_j);
$$
if $j$ is even, 
$$\min_{k\ge 2,\tau_{1:k}}\{\Loss(\bS_j;\tau_{1:k})+(k-1)\beta\}-\Loss^\ast(\bS_j)\ge a(\beta,l_j), 
$$
$$\Loss^\ast(\bS_j)- \min_{x_\tau\in \bS_j} \Loss(\bS_j;\tau ) \leq b(\beta, l_j),\mbox{  and}$$
$$\Loss(\bS_j,\varnothing)-\Loss^\ast(\bS_j)\ge a(\beta, l_j)+\beta;
$$


In what follows, we will condition on this event holding. Since $\beta\ge \max_k\{\gamma^{(1)}_{l_{2k+1}}, \gamma^{(2)}_{l_{2k}}\}$, due to the Conditions  \ref{cond.1} and \ref{cond.2}, by a simple union bound we have the probability of this is lower bounded by 
\begin{align*}
1-(m^\ast+1)p_1(\beta,T)-(m^\ast+1) p_2(\beta, T)-m^\ast p_3(\beta, \max_j n_j)\\-m^\ast p_4(\beta, \max_j n_j)-m^\ast p_5\left(\sigst,\beta+a(\beta, T)\right).
\end{align*} 
Now for any segmentation $\tau_{1:m}$ we can compare the penalised cost of that segmentation with the penalised cost of the true segmentation. Trivially, we have
\begin{align*}
&\left\{\Loss(x_{1:T};\tau^\ast_{1:m^\ast})+\beta m^\ast\right\} - \left\{\Loss(x_{1:T};\tau_{1:m})+\beta m\right\} \\ 
&\leq \Loss^*(x_{1:T})-\Loss(x_{1:T};\tau_{1:m}) +(m^\ast-m)\beta,
\end{align*}
as $\Loss(x_{1:T};\tau^\ast_{1:m^\ast})$ minimises over the segment parameters whereas $\Loss^*(x_{1:T})$ fixes them to their true values. Using Lemma \ref{lem:1} we have
$$
\Loss^*(x_{1:T})-\Loss(x_{1:T};\tau_{1:m}) \leq \sum_{j=1}^{2m^*+1} \left\{ \Loss^*(\bS_j)-\Loss(\bS_j;\tau_{1:m}) 
\right\}.
$$
We can partition the set of regions into $\mathcal{A}$, $\mathcal{B}$ and $\mathcal{D}$, which are defined as the regions where the putative segmentation, $\tau_{1:m}$, fits too many changes, too few changes, or the correct number of changes. Let $k_j$ and $k^*_j$, respectively, denote the number of changepoints in region $j$ in the putative and the true segmentation.
Thus
\begin{align*}
&\left\{\Loss(x_{1:T};\tau^\ast_{1:m^\ast})+\beta m^\ast\right\} - \left\{\Loss(x_{1:T};\tau_{1:m})+\beta m\right\}\\
 \leq&\sum_{j \in \mathcal{A} } \left\{ \Loss^*(\bS_j)-\Loss(\bS_j;\tau_{1:m})   +(k^*_j-k_j)\beta  \right\} \\ 
 &+ \sum_{j \in \mathcal{B} } \left\{ \Loss^*(\bS_j)-\Loss(\bS_j;\tau_{1:m}) +(k^*_j-k_j)\beta  \right\} \\
 &+ \sum_{j \in \mathcal{D} } \left\{ \Loss^*(\bS_j)-\Loss(\bS_j;\tau_{1:m}) +(k^*_j-k_j)\beta  \right\}
\end{align*}
Conditional on our event $E_1$ holding, terms in the first two sums can be bounded above by $-a(\beta,T)$, while terms in the final sum can be bounded above by $b(\beta,T)$. If we let $m_D$ denote the number of terms in the final sum we have
\begin{align*}
&\left\{\Loss(x_{1:T};\tau^\ast_{1:m^\ast})+\beta m^\ast\right\} - \left\{\Loss(x_{1:T};\tau_{1:m})+\beta m\right\}\\
\leq& m_{D}b(\beta,T)-(2m^*+1-m_{D})a(\beta,T).
\end{align*}
For any segmentation $\tau_{1:m}$ with which $\mathcal{E}$ does not hold when $\hat{m}=m$ and $\hat{\tau}_{1:\hat{m}}=\tau_{1:m}$ we have $m_D\leq 2m^*$. Thus as $a(\beta,T)>2m^*b(\beta,T)$ the right-hand side of this equality will be strictly less than 0. Hence, conditional on event $E_1$ holding, no such segmentation can minimise our penalised cost. \qed

\section{Proofs of Section~\ref{sec:change-in-mean}}\label{suppsec:change-in-mean}

Lemma \ref{lemma.chi-square} below is a direct adaptation of  Lemma 1 in \cite{laurent2000adaptive} and Lemma 8.1 in \cite{birge2001alternative}. We will use it repeatedly.

\begin{lemma}\label{lemma.chi-square}
Let $\chi^2_k$ be a central chi-square statistic with $k$ degrees of freedom and $\chi^2_k(\nu)$ a chi-square statistic with $k$ degrees of freedom and non-centrality parameter $\nu$. For any $x> k$  and $y< k+\nu$, we have
\begin{align}
\p(\chi^2_k\ge x)\le \exp\left(-\dfrac{x-\sqrt{k(2x-k)}}{2}\right),\lbl{ineq.chisq.upper}\\
\p(\chi^2_k(\nu)\le y)\le \exp\left(-\dfrac{(k+\nu-y)^2}{4k+8\nu}\right).\lbl{ineq.chisq.lower}
\end{align}
\end{lemma}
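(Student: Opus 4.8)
The plan is to obtain both inequalities from the standard sub-exponential tail bounds for chi-square variables, stated in their usual "$e^{-t}$" form by \cite{laurent2000adaptive} and \cite{birge2001alternative}, and then merely reparametrise the deviation threshold. The engine behind both of those results is the Cram\'er--Chernoff method applied to the moment generating function $E[\exp(s\,\chi^2_k(\nu))] = (1-2s)^{-k/2}\exp\!\left(\nu s/(1-2s)\right)$, valid for $s<1/2$. Rather than redo that optimisation over $s$, I would invoke the cited inequalities directly and convert them into the stated form by solving for the tail parameter; this is exactly why the lemma is described as a direct adaptation.

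For the upper tail I would start from the central-case bound $\p(\chi^2_k \ge k + 2\sqrt{kt} + 2t)\le e^{-t}$, valid for every $t>0$. Given a threshold $x$, the idea is to pick $t$ so that $k + 2\sqrt{kt} + 2t = x$. Setting $s=\sqrt{t}$ this is the quadratic $2s^2 + 2\sqrt{k}\,s + (k-x)=0$, whose positive root is $\sqrt{t} = \left(\sqrt{2x-k}-\sqrt{k}\right)/2$, and hence $t = \left(x - \sqrt{k(2x-k)}\right)/2$. Substituting this value into $e^{-t}$ yields exactly \eqref{ineq.chisq.upper}. Since $t\mapsto k+2\sqrt{kt}+2t$ is a strictly increasing bijection from $(0,\infty)$ onto $(k,\infty)$, the hypothesis $x>k$ guarantees both that a unique $t>0$ exists and that $2x-k>0$, so the square root is real.

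For the lower tail I would use the matching non-central bound $\p\!\left(\chi^2_k(\nu)\le (k+\nu) - 2\sqrt{(k+2\nu)t}\right)\le e^{-t}$. Setting the threshold equal to $y$ gives $(k+\nu)-y = 2\sqrt{(k+2\nu)t}$, and squaring at once produces $t = (k+\nu-y)^2/(4k+8\nu)$, which when inserted into $e^{-t}$ is precisely \eqref{ineq.chisq.lower}. Here the assumption $y<k+\nu$ is exactly what makes $t$ positive and the reparametrisation admissible.

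The computations are elementary, so there is no genuine obstacle; the only points requiring care are bookkeeping. I would verify that the selected $t$ is positive (equivalently, that the exponent is nonnegative, which for the upper tail reduces to $x-\sqrt{k(2x-k)}\ge 0 \Leftrightarrow (x-k)^2\ge 0$), that the ranges $x>k$ and $y<k+\nu$ coincide with the admissible ranges of the source inequalities, and that the radicands $2x-k$ and $k+2\nu$ are nonnegative under these hypotheses. All of the substantive content lives in the two cited inequalities; the work here is purely the change of variable in the deviation parameter.
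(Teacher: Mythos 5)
Your proposal is correct and takes essentially the same route as the paper, which offers no separate argument but simply declares the lemma a direct adaptation of Lemma 1 of \cite{laurent2000adaptive} and Lemma 8.1 of \cite{birge2001alternative}; your reparametrisation of the deviation parameter (solving $k+2\sqrt{kt}+2t=x$ to get $t=\bigl(x-\sqrt{k(2x-k)}\bigr)/2$, and $(k+\nu)-2\sqrt{(k+2\nu)t}=y$ to get $t=(k+\nu-y)^2/(4k+8\nu)$) is precisely that adaptation made explicit. The bookkeeping you flag (positivity of $t$ under $x>k$ and $y<k+\nu$, nonnegativity of the radicands) is exactly the right set of checks, and all of them hold.
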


\medskip

\subsection{Proof of Propositions~\ref{prop:mean1} and \ref{prop:mean2}}

\begin{lemma}\label{lemma.addk.A} For any $t$ and any model $\cN_n(t)$, if $\bS=x_{t+1:t+n}$ is a region that contains no true changepoint, then for any $\gamma\ge \gamma_n^{(1)}$, where 
\begin{equation}
\gamma_n^{(1)}=\max\left\{(2+\epsilon)\log n, 2\log n+8\sqrt{16+2\log n}+32, 2\log n + 32(2m^\ast + 1)\right\}
\end{equation}
with $\epsilon>0$ is a constant, we have 
 $$\p\left(\min_{1\le k,\tau_{1:k}}\{\Loss\left(\bS;\tau_{1:k}\right)+k\gamma\}-\Loss^\ast\left(\bS\right)\le \dfrac{\gamma-2\log n}{4}\right)< 2\exp\left(-\dfrac{\gamma-2\log n}{4}\right),$$
  and 
  $$
\p\bigg( \Loss^\ast(\bS)- \Loss(\bS;\varnothing) \geq \dfrac{\gamma-2\log n}{4(2m^\ast+1)} \bigg) \le \exp\left(-\dfrac{\gamma-2\log n}{16(2m^\ast+1)}\right).
$$
\end{lemma}

\begin{proof}
For any $t$ and model $\cN_n(t)$  such that $\bS=x_{t+1:t+n}$ is a region that contains no true changepoint, since $\eps_{t+1:t+n}$ are i.i.d, it is easy to derive that 
$
\Loss^\ast\left(\bS\right)-\Loss({\bS};\tau_{1:k}) \sim \chi^2_{k+1}
$ and 
$\Loss^\ast\left(\bS\right)-\Loss({\bS};\varnothing)\sim \chi^2_{1}$.

Letting $a(\gamma,n)=(\gamma-2\log n)/4$, we have
\begin{align*}
&\p\left(\min_{k,\tau_{1:k}}\{\Loss(\bS; \tau_{1:k})+k\gamma-\Loss^{\ast}(\bS)\}\le a(\gamma,n) \right)\\
&\le \sum_{k=1}^n \p\left(\max_{\tau_{1:k}}\left[\Loss^{\ast}({\bS})-\Loss({\bS};\tau_{1:k})\right]\ge k\gamma-a(\gamma,n)\right)\\
&\le \sum_{k=1}^\infty {n \choose k} \p\left( \chi^2_{k+1}\ge k\gamma-a(\gamma,n)\right)
\end{align*}
Combined with (\ref{ineq.chisq.upper}) in Lemma \ref{lemma.chi-square},  
\begin{align*}
& {n \choose k} \p\left( \chi^2_{k+1}\ge k\gamma-a(\gamma,n)\right)\\
<& \dfrac{n^k}{k!}   \exp\left(-\dfrac{k\gamma-a(\gamma,n)-\sqrt{(k+1)(2k\gamma-2a(\gamma,n)-k-1)}}{2}\right)\\
&<\dfrac{1}{k!}   \exp\left(-\dfrac{(k-1/4)(\gamma-2\log n)-\sqrt{2k(k+1)\gamma}}{2}\right)
\end{align*}
As long as $\gamma\ge 2\log n+8\sqrt{16+2\log n}+32$, we have 
\begin{align}
\sqrt{2k(k+1)\gamma}\le \dfrac{k(\gamma-2\log n)}{4}.
\end{align}
Hence 
\begin{align*}
&\p\left(\min_{k,\tau_{1:k}}\{\Loss(\bS; \tau_{1:k})+k\gamma-\Loss^{\ast}(\bS)\}\le a(\gamma,n) \right)\\
&< \sum_{k=1}^{\infty}\dfrac{1}{k!} \exp\left(-\dfrac{(3k/4-1/4)(\gamma-2\log n)}{2}\right)\\
&<\sum_{k=1}^{\infty}\dfrac{1}{2^{k-1}} \exp\left(-\dfrac{\gamma-2\log n}{4}\right)\\
&<2\exp\left(-\dfrac{\gamma-2\log n}{4}\right).
\end{align*}
Next we prove the second inequality. For any specified $m^*$, define $b(\gamma,n)=(\gamma-2\log n)/(8m^*+4)$.
Since $\gamma>2\log n+32(2m^\ast+1)$, we have $b(\gamma, n)\ge 8$, which leads to $\sqrt{2b(\gamma,n)}\le b(\gamma,n)/2$. Applying (\ref{ineq.chisq.upper}) in Lemma \ref{lemma.chi-square}, we obtain
\begin{align*}
\p\bigg( \Loss^\ast(\bS)- \Loss(\bS;\varnothing) \geq b(\gamma, n) \bigg)=&\p\left(\chi^2_1\ge  b(\gamma, n) \right)\\
< &\exp\left(-\dfrac{b(\gamma,n)-\sqrt{2b(\gamma,n)}}{2}\right)\\
\le &\exp\left(-\dfrac{b(\gamma,n)}{4}\right)
\\= &\exp\left(-\dfrac{\gamma-2\log n}{16(2m^\ast+1)}\right).
\end{align*}
\end{proof}

\begin{lemma}\label{lemma.addk2.A} For any $t$ and any model, if $\bS=x_{t+1:t+2n}$ is a region that contains a single changepoint  at $\tau^\ast=t+n$, for any $\gamma\ge \gamma_{n}^{(2)}$, where
\begin{equation}
 \gamma_{n}^{(2)}=\max\left\{(8m^\ast+6+\epsilon)\log (2n), 2\log (2n)+64(2m^\ast+1)\right\},
 \end{equation}
  with $\epsilon>0$ is a constant,  we have  
 $$\p\left(\min_{k\ge 2,\tau_{1:k}}\{\Loss({\bS}; \tau_{1:k})+(k-1)\gamma\}- \Loss^{\ast
}(\bS)\le \dfrac{\gamma-2\log (2n)}{4}\right)<  \exp\left(-\dfrac{\gamma-8\log(2n)}{4}\right),$$
and 
$$
\p\left(\Loss^\ast({\bS})-\min_{\tau_1}\Loss(\bS; \tau_1)\ge \dfrac{\gamma-2\log (2n)}{4(2m^\ast+1)}\right) \le \exp\left(-\dfrac{\gamma-(8m^\ast+6)\log (2n)}{16(2m^\ast+1)}\right).
$$
\end{lemma}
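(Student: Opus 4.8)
The plan is to mimic the proof of Lemma~\ref{lemma.addk.A}, modified to account for the single true change at $t+n$ by always comparing a candidate segmentation against the one obtained by additionally inserting the true changepoint. Write $\bS=x_{t+1:t+2n}$, set $y=x/\sigma$ and $\eta=\eps/\sigma\sim\cN(\bzero,I)$, and let $\nu=\mu/\sigma$ be the scaled true mean on $\bS$, which is piecewise constant with its single break at $t+n$. Since under $\Loss^\ast$ each true segment is fitted with its true mean, $\Loss^\ast(\bS)=\|\eta\|^2\sim\chi^2_{2n}$. The device I would use throughout is that whenever a fitted piecewise-constant model space contains $\nu$, orthogonal projection gives a cost reduction equal to $\|P\eta\|^2$, an exact central chi-square whose degrees of freedom equal the number of fitted segments.

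First I would record the key monotonicity step. Applying property~(\ref{lem.property.1}) of Lemma~\ref{lem:1} with splitting point $r=t+n$ gives, for any $\tau_{1:k}$,
$$
\Loss(\bS;\tau_{1:k})\ \ge\ \Loss(\bS;\tau_{1:k}\cup\{t+n\}),
\quad\text{hence}\quad
\Loss^\ast(\bS)-\Loss(\bS;\tau_{1:k})\ \le\ \Loss^\ast(\bS)-\Loss(\bS;\tau_{1:k}\cup\{t+n\}).
$$
Because $\nu$ lies in the span of the augmented model, the right-hand side equals $\|P\eta\|^2\sim\chi^2_{d}$ with $d=|\tau_{1:k}\cup\{t+n\}|+1\le k+2$. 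Since $\chi^2_d$ is stochastically increasing in $d$, for every fixed $\tau_{1:k}$ we obtain $\p\big(\Loss^\ast(\bS)-\Loss(\bS;\tau_{1:k})\ge x\big)\le\p(\chi^2_{k+2}\ge x)$. This is the only genuinely new idea relative to Lemma~\ref{lemma.addk.A}: the penalty bookkeeping $(k-1)\gamma$ rather than $k\gamma$ is exactly compensated by the fact that one of the $k$ fitted changes can be charged to the true change, which is why the effective degrees of freedom rise only to $k+2$.

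For the first inequality I would union-bound over $k\ge 2$ and over the $\binom{2n}{k}$ choices of $\tau_{1:k}$. The event forces $\Loss^\ast(\bS)-\Loss(\bS;\tau_{1:k})\ge (k-1)\gamma-a(\gamma,2n)$, so
$$
\p\!\left(\min_{k\ge 2,\tau_{1:k}}\{\Loss(\bS;\tau_{1:k})+(k-1)\gamma\}-\Loss^\ast(\bS)\le a(\gamma,2n)\right)
\ \le\ \sum_{k\ge 2}\binom{2n}{k}\,\p\!\left(\chi^2_{k+2}\ge (k-1)\gamma-a(\gamma,2n)\right),
$$
after which I would insert $\binom{2n}{k}\le (2n)^k/k!$ and the tail bound (\ref{ineq.chisq.upper}), exactly as in Lemma~\ref{lemma.addk.A}. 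With $a(\gamma,2n)=(\gamma-2\log(2n))/4$ the resulting exponent is linear in $k$ with negative slope once $\gamma\ge\gamma^{(2)}_n$, so the $k=2$ term dominates and the geometric series sums to the stated $\exp(-(\gamma-8\log(2n))/4)$. For the second inequality I would write $\Loss^\ast(\bS)-\min_{\tau_1}\Loss(\bS;\tau_1)=\max_{\tau_1}\{\Loss^\ast(\bS)-\Loss(\bS;\tau_1)\}$, bound each term by the augmented projection (now at most two breaks, hence $\chi^2_3$), and union-bound over the $2n-1$ candidate locations, giving $2n\cdot\p(\chi^2_3\ge b(\gamma,2n))$, which (\ref{ineq.chisq.upper}) turns into the claimed bound once $\gamma\ge\gamma^{(2)}_n$ makes $b(\gamma,2n)$ large.

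The main obstacle is not conceptual but the constant bookkeeping: controlling the square-root correction $\sqrt{(k+2)(2x-(k+2))}$ in (\ref{ineq.chisq.upper}) uniformly in $k$, and verifying that the two components of $\gamma^{(2)}_n$---the $(8m^\ast+6+\epsilon)\log(2n)$ term and the $2\log(2n)+64(2m^\ast+1)$ term---are precisely what is needed to make the $k$-series exponent negative and summable for the first inequality, and to make $b(\gamma,2n)$ large enough that $\sqrt{6b}$ is dominated by a suitable fraction of $b$ for the second. These are the same manipulations carried out in Lemma~\ref{lemma.addk.A}, so I would present them tersely.
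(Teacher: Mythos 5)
Your proposal is correct and follows essentially the same route as the paper's own proof: compare each candidate segmentation with its augmentation by the true change at $t+n$, so that the cost reduction becomes an exact central chi-square (at most $\chi^2_{k+2}$ for the first claim, at most $\chi^2_3$ per location for the second), then union-bound over $k\ge 2$, over the $\binom{2n}{k}$ placements, and over the $2n-1$ single-change locations using the tail bound of Lemma \ref{lemma.chi-square}. The one substantive difference is in your favour: for the second inequality the paper asserts $\Loss^\ast(\bS)-\Loss(\bS;\tau_1)\sim\chi^2_2$ for every fixed $\tau_1$, which is exact only when $\tau_1=\tau^\ast$, whereas your stochastic domination by $\chi^2_3$ via the augmented projection is the correct statement, at the price of a marginally larger constant requirement on $\gamma$ that the paper's admittedly unoptimised constants are meant to absorb.
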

\begin{proof}
Note that for any $\tau_{1:k}$ on $\bS$, 
$$
\Loss^\ast(\bS)-\Loss({\bS};\tau_{1:k})\le\Loss^\ast({\bS})-\Loss({\bS};\tau_{1:k},\tau^\ast)\sim\chi^2_{k+2}, \mbox{ and }
$$
$$
\Loss^\ast(\bS)-\Loss\left(\bS; \tau_1\right)\sim \chi^2_2.
$$

Let $a(\gamma, 2n)=\dfrac{\gamma-2\log(2n)}{4}\le \gamma/4$ and $b(\gamma,n)=(\gamma-2\log n)/(8m^*+4)$. 
Since $\gamma\ge 64(2m^\ast+1)+2\log(2n)$, we obtain 
$b(\gamma,2n)\ge 16$ and $\sqrt{2(k-1)(k+2)\gamma}\le (k-1)\gamma/4$.

Similar to the proof of Lemma \ref{lemma.addk.A},  by Bonferroni correction, we have the probability $\p\left(\min_{k\ge 2,\tau_{1:k}}\left\{\Loss({\bS}; \tau_{1:k})+(k-1)\gamma\right\}- \Loss^{\ast}(\bS)\le a(\gamma, 2n)\right)$ is upper bounded by 
$
\sum_{k=2}^\infty {2n \choose k} \p\left( \chi^2_{k+2}\ge (k-1)\gamma-a(\gamma, 2n)\right)
$

Applying again  (\ref{ineq.chisq.upper}) in Lemma \ref{lemma.chi-square},
\begin{align*}
&{2n \choose k} \p\left( \chi^2_{k+2}\ge (k-1)\gamma-a(\gamma, 2n)\right)\\
<& \dfrac{(2n)^k}{k!} \exp\left(-\dfrac{(k-1)\gamma-a(\gamma, 2n)-\sqrt{2(k+2)(k-1)\gamma}}{2}\right)\\
\le& \dfrac{(2n)^k}{k!} \exp\left(-\dfrac{(k-1)\gamma}{4}\right)\\
\le& \dfrac{1}{k!} \exp\left(-\dfrac{(k-1)\gamma-4k\log(2n)}{4}\right)\\
\le& \dfrac{1}{2^{k-1}} \exp\left(-\dfrac{(k-1)(\gamma-8\log(2n))}{4}\right).
\end{align*}
Therefore
\begin{align*}
&\p\left(\min_{k\ge 2,\tau_{1:k}}\left\{\Loss({\bS}; \tau_{1:k})+(k-1)\gamma\right\}- \Loss^{\ast}(\bS)\le a(\gamma, 2n)\right)\\
<&\sum_{k=2}^{\infty}\dfrac{1}{2^{k-1}} \exp\left(-\dfrac{(k-1)(\gamma-8\log(2n))}{4}\right)=\exp\left(-\dfrac{\gamma-8\log(2n)}{4}\right).
\end{align*}

Moreover, since  $b(\gamma, n)\ge 16$, which leads to $2\sqrt{b(\gamma,n)}\le b(\gamma,n)/2$, therefore
\begin{align*}
\p\left(\Loss^\ast({\bS})-\min_{\tau_1}\Loss(\bS; \tau_1)\ge b(\gamma, 2n)\right)
\le &(2n-1)\p\bigg(\Loss^\ast(\bS)-\Loss(\bS; \tau_1)\ge b(\gamma, 2n)\bigg)\\
=&(2n-1)\p\left(\chi^2_2\ge b(\gamma, 2n)\right)\\
< &2n \exp\left(-\dfrac{b(\gamma,2n)- 2\sqrt{b(\gamma, 2n)}
}{2}\right)\nonumber\\
\le&\exp\left(-\dfrac{b(\gamma,2n)-4\log(2n)}{4}\right)\nonumber\\
 = &\exp\left(-\dfrac{\gamma-(8m^\ast+6)\log (2n)}{16(2m^\ast+1)}\right),\nonumber
\end{align*}
which completes the proof.

\end{proof}

\medskip

\begin{lemma}\label{lemma.drop1.A}  Consider any $t$ and model such that $\bS=x_{t+1:t+2n}$ is a region then contains a single changepoint  at $\tau^\ast=t+n$ and $\Delta$ is the absolute difference between the true means before and after the change. For any $5\le z\le {n\Delta^2}/8$ we have
$$\p\left(\Loss\left({\bS};\varnothing\right)-\Loss^\ast\left({\bS}\right)\le z\right)\le 2\exp\left(-\dfrac{z}{20}\right).
$$
\end{lemma}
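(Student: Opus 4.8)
The plan is to mimic the two-statistic structure already used in Lemmas~\ref{lemma.addk.A} and~\ref{lemma.addk2.A}: reduce the cost gap to a difference of a noncentral and a central chi-square, then control it by a two-term union bound. Let $\bar x_1$ and $\bar x_2$ denote the sample means of the two halves $x_{t+1:t+n}$ and $x_{t+n+1:t+2n}$, each of length $n$, and write $\mu_1,\mu_2$ for the corresponding true means, so $|\mu_1-\mu_2|=\Delta$. Fitting a single mean on $\bS$ gives $\Loss(\bS;\varnothing)$ while fitting the two segments separately at the true change $\tau^\ast=t+n$ gives $\Loss(\bS;\tau^\ast)$, and the one-way ANOVA identity yields
\begin{align*}
A:=\Loss(\bS;\varnothing)-\Loss(\bS;\tau^\ast)=\frac{n}{2}\,\frac{(\bar x_1-\bar x_2)^2}{\sigma^2}.
\end{align*}
Since $\bar x_1-\bar x_2\sim\cN(\mu_1-\mu_2,\,2\sigma^2/n)$, this is noncentral chi-square, $A\sim\chi^2_1(\nu)$ with $\nu=n\Delta^2/(2\sigma^2)=\mS(\Delta,n)$ (taking $\sigma^2=1$, which is without loss as the cost is normalised by $\sigma^2$). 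Similarly $B:=\Loss^\ast(\bS)-\Loss(\bS;\tau^\ast)=\sum_{\text{seg}}n(\bar x_{\text{seg}}-\mu_{\text{seg}})^2/\sigma^2\sim\chi^2_2$ is the extra cost of using the true rather than the fitted means. Hence $\Loss(\bS;\varnothing)-\Loss^\ast(\bS)=A-B$, and the hypothesis $z\le n\Delta^2/8$ is exactly $\nu\ge 4z$.

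I would then bound $\p(A-B\le z)$ by a union bound. Because $A>2z$ together with $B<z$ forces $A-B>z$, we have $\{A-B\le z\}\subseteq\{A\le 2z\}\cup\{B\ge z\}$, so only the marginal laws of $A$ and $B$ enter and their possible dependence is irrelevant. For the central part, the exact tail $\p(\chi^2_2\ge z)=e^{-z/2}$---or inequality~(\ref{ineq.chisq.upper}) with $k=2$, legitimate since $z\ge 5>2$---gives $\p(B\ge z)\le e^{-z/20}$ immediately. For the noncentral part, inequality~(\ref{ineq.chisq.lower}) with $k=1$ and $y=2z$ applies because $\nu\ge 4z$ ensures $2z<1+\nu$, and yields
\begin{align*}
\p(A\le 2z)\le\exp\!\left(-\frac{(1+\nu-2z)^2}{4+8\nu}\right).
\end{align*}

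The one routine calculation is to check $(1+\nu-2z)^2/(4+8\nu)\ge z/20$ for all $\nu\ge 4z$. I would argue that the left-hand side is increasing in $\nu$ on $[4z,\infty)$, since differentiation gives a numerator proportional to $(1+\nu-2z)(\nu+2z)>0$ there, so it suffices to evaluate at $\nu=4z$, where the bound reduces to $20(1+2z)^2\ge z(4+32z)$, i.e.\ $48z^2+76z+20\ge 0$, which holds for every $z$. Adding the two tail bounds then gives the claimed $\p(\Loss(\bS;\varnothing)-\Loss^\ast(\bS)\le z)\le 2\exp(-z/20)$.

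The main obstacle is conceptual rather than computational: recognising that $\Loss(\bS;\varnothing)-\Loss^\ast(\bS)$ splits as the difference $A-B$ of a noncentral $\chi^2_1(\nu)$ (the signal-carrying ANOVA term, whose noncentrality is precisely the signal strength $\mS(\Delta,n)$) and a central $\chi^2_2$ (the price of not knowing the true means), and then seeing that the union bound decouples them so the joint law is never needed. The split point---$2z$ for $A$ against $z$ for $B$---must be chosen so that both tails fall below $e^{-z/20}$ using only the single structural inequality $\nu\ge 4z$; this balancing is the one place that needs care, after which everything follows directly from Lemma~\ref{lemma.chi-square}.
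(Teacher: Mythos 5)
Your proof is correct and follows essentially the same route as the paper's: the same decomposition of $\Loss(\bS;\varnothing)-\Loss^\ast(\bS)$ into a noncentral $\chi^2_1(\nu)$ term ($\nu=n\Delta^2/2$) minus a central $\chi^2_2$ term, the same union bound at thresholds $2z$ and $z$, and the same tail bounds from Lemma~\ref{lemma.chi-square}. If anything, your bookkeeping is slightly cleaner: you apply the union bound directly to $\{A-B\le z\}$ (the paper first enlarges the event to $\{A\le z+2B\}$, a redundant step which, read literally, would force a threshold of $3z$ rather than $2z$), and you explicitly verify the final inequality $(1+\nu-2z)^2/(4+8\nu)\ge z/20$ for $\nu\ge 4z$, which the paper asserts without proof.
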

\begin{proof} 
It is straightforward to show that $\Loss\left({\bS};\varnothing\right)-\Loss\left({\bS, \tau^\ast}\right)\sim \chi^2_{1}(\nu)$ with non-centrality parameter 
$\nu=n\Delta^2/2$, and $\Loss^\ast\left({\bS}\right)-\Loss\left({\bS, \tau^\ast}\right)\sim \chi^2_2$.
Therefore as long as $5\le z\le n\Delta^2/8$,
\begin{align}
&\p\left(\Loss\left({\bS};\varnothing\right)-\Loss^\ast\left({\bS}\right)\le z\right)\\
\le &\p\left(\Loss\left({\bS};\varnothing\right)-\Loss^\ast\left({\bS}\right)\le z + \Loss^\ast\left({\bS}\right)-\Loss\left({\bS, \tau^\ast}\right)\right)\nonumber\\
\le& 
\p\left(\Loss\left({\bS};\varnothing\right)-\Loss\left({\bS};\tau^\ast\right)\le 2z\right)+\p\left(\Loss^\ast\left({\bS}\right)-\Loss\left({\bS, \tau^\ast}\right)\ge z\right)\nonumber
\\
\le&\p\left(\chi^2_1(\nu)\le 2z\right)+\p\left(\chi^2_2\ge z\right)\nonumber\\
\le&\exp\left(-\dfrac{(1+\nu-2z)^2}{4+8\nu}\right)+\exp\left(-\dfrac{z-2\sqrt{z}}{2}\right)\\
<&2\exp\left(-\dfrac{z}{20}\right),\nonumber
\end{align}
where the second last inequality follows from Lemma \ref{lemma.chi-square}.

\end{proof}
\medskip

As  Lemmas \ref{lemma.addk.A}, \ref{lemma.addk2.A} and \ref{lemma.drop1.A} hold, respectively, for any $\bS\in \{\bS_{1,n}(t)\}$, $\{\bS_{2,n}(t)\}$ and $\{\bS_{\Delta,n}(t)\}$, it is straightforward to obtain Propositions \ref{prop:mean1} and \ref{prop:mean2}.

\subsection{Proof of Theorem \ref{thm:mean}}


We take $\beta=(2+\epsilon)\log T$, with a suitable choice of $n_{1:m^\ast}$ such that  $$n_j=\min\left\{\dfrac{8(\beta+a(\beta, T))}{\Delta_j^2}, \delta_j, \delta_{j+1}\right\},$$ where $a(\beta, T)=(\beta-2\log T)/4$ as indicates in Proposition \ref{prop:mean2}. Let $b(\beta, T)=(\beta-2\log T)/4(2m^\ast+1)$. 

First, we show that the choice of $\beta$ satisfy the requirements in Theorem \ref{thm.global}. 
As we require $\delta_T\Delta_T^2\ge (16+10\epsilon)\log T$, it follows that
\begin{align}\label{eq.nj.mean}
n_j= \dfrac{8(\beta+a(\beta, T))}{\Delta_j^2}=\dfrac{(16+10\epsilon)\log T}{\Delta_j^2}.
\end{align}

If $T\ge \max\left\{\exp\left({64}/{\epsilon}+{128}/{\epsilon^2}\right),\exp\big({(128m^\ast+64)}/{\epsilon}\big)\right\}$, we have $\beta\ge \gamma^{(1)}_{T}$, where
\begin{equation*}
\gamma_T^{(1)}=\max\left\{(2+\epsilon)\log T, 2\log n+8\sqrt{16+2\log T}+32, 2\log T + 32(2m^\ast + 1)\right\},
\end{equation*} 
as defined in Proposition \ref{prop:mean2}.

Moreover, note that if $T$ is large enough and
\begin{align*}
\Delta_T^2\ge \dfrac{2(16+10\epsilon)\log T}{T^{1/(4m^\ast+3)}},
\end{align*}
we have $\epsilon\log T \ge 64(2m^\ast+1)$  and $2\log T\ge (8m^\ast+6)\log\left(2 n_j\right)$ for all $j=1, 2,\dots,m^\ast$. Therefore $\beta\ge \max_{j}\gamma^{(2)}_{ n_j}$, 
where each of $\gamma^{(2)}_{ n_j}$ has the following form 
\begin{equation*}
 \gamma_{n_j}^{(2)}=\max\left\{(8m^\ast+6+\epsilon)\log (2n_j), 2\log (2n_j)+64(2m^\ast+1)\right\},
 \end{equation*}
as defined in Proposition \ref{prop:mean2}.


Altogether, as long as $T$ is large enough, we have $\beta\ge \max\bigg\{\gamma_{T}^{(1)}, \max_j\gamma^{(2)}_{n_j}\bigg\}$ and $a(\beta, T)>2m^\ast b(\beta,T)$.

Next, we give the probability bound for local region conditions. By Proposition \ref{prop:mean1}, we can derive the formula of $p_1(\gamma, n), p_2(\gamma, n), p_3(\gamma, n)$ and $p_4(\gamma, n)$.
Since $\beta=(2+\epsilon)\log T$,  it is straightforward to verify that 
 $$\min\bigg\{p_1(\beta, T)/2, p_2(\beta, T), p_3\left(\beta, \max_j n_j\right), p_4\left(\beta, \max_j n_j\right)\bigg\} \le T^{-\epsilon/16(2m^\ast+1)}.$$

In addition, note that $\bar \mS=\min_j\Delta_j^2n_j/2$, using equation (\ref{eq.nj.mean}), we have
$\bar \mS\ge 20$ as $T$ is large enough. Combined with Proposition \ref{prop:mean2} we obtain
\begin{align*}
p_5\left(\bar\mS, \beta+a\left(\beta,T\right)\right)=&2\exp\left(-\dfrac{\beta+a(\beta, T)}{20}\right)
= 2T^{-\frac{2+5/4\epsilon}{20}} \le 2T^{-\frac{\epsilon}{32m^\ast+16}}.
\end{align*}

Hence, following  Theorem \ref{thm.global}, we have
\begin{align*}
&\p\left(\hat{m}=m, |\htau_j-\tau^\ast_j|\le n_j \mbox{ for all }j=1,\dots, m^\ast \right)\\
\ge& 1-(m^\ast+1)p_1\left(\beta,T\right)-(m^\ast+1) p_2(\beta, T)-m^\ast p_3\left(\beta, \max_j n_j\right)\\
&-m^\ast p_4\left(\beta, \max_j n_j\right)-m^\ast p_5\left(\sigst,\beta+a(\beta,T)\right)\\
\ge& 1-(7m^\ast+3)T^{-\epsilon/(32m^\ast+16)}.
\end{align*}

\bigskip

\section{Proofs of Section~\ref{sec:change-in-slope}} \label{suppsec:change-in-slope}

\subsection{Proof for Proposition \ref{prop:slope1} and \ref{prop:slope2}}
\begin{lemma}\label{lemma.addk.B} For any $t$ and any model, if $\bS=x_{t+1:t+n}$ is a region that contains no true changepoint, then for any $\gamma\ge \gamma_{n}^{(1)}$, where
\begin{equation}
 \gamma_{n}^{(1)}=\max\left\{(2+\epsilon)\log n, 2\log n+4\sqrt{9+3\log n}+12, 2\log n+96(2m^\ast+1)\right\},
\end{equation}
 where  $\epsilon>0$, we have 
 $$\p\left(\min_{1\le k,\tau_{1:k}}\{\Loss\left(\bS;\tau_{1:k}\right)+k\gamma\}-\Loss^\ast\left(\bS\right)\le \dfrac{\gamma-2\log n}{6}\right)< 2\exp\left(-\dfrac{\gamma-2\log n}{6}\right),$$
  and 
  $$
\p\bigg( \Loss^\ast(\bS)- \Loss(\bS;\varnothing) \geq \dfrac{\gamma-2\log n}{6(2m^\ast+1)} \bigg) \le \exp\left(-\dfrac{\gamma-2\log n}{24(2m^\ast+1)}\right).
$$ 
\end{lemma}

\begin{proof}

Note that by Lemma \ref{lem.chi-square2.slope}, for any $k$ and $\tau_{1:k}$,  we  have
$\Loss^\ast(\bS)-\Loss(\bS; \tau_{1:k})\sim\chi^2_{k+2}$ and
$\Loss^\ast\left(\bS\right)-\Loss({\bS};\varnothing)\sim \chi^2_{2}$.

Therefore, similar to the proof of Lemma \ref{lemma.addk.A}, let $a(\gamma,n)=\dfrac{\gamma-2\log n}{6}$,
\begin{align*}
&\p\left(\min_{k,\tau_{1:k}}\{\Loss(\bS; \tau_{1:k})+k\gamma-\Loss^{\ast}(\bS)\}\le a(\gamma,n) \right)\\
&\le \sum_{k=1}^n \p\left(\max_{\tau_{1:k}}\left[\Loss^{\ast}({\bS})-\Loss({\bS};\tau_{1:k})\right]\ge k\gamma-a(\gamma,n)\right)\\
&\le \sum_{k=1}^\infty {n \choose k} \p\left( \chi^2_{k+2}\ge k\gamma-a(\gamma,n)\right)
\end{align*}

Applying (\ref{ineq.chisq.upper}) in Lemma \ref{lemma.chi-square}, we have
\begin{align*}
{n \choose k} \p\left( \chi^2_{k+2}\ge k\gamma-a(\gamma,n)\right)& \le\dfrac{n^k}{k!}  \exp\left(-\dfrac{k\gamma-(\gamma-2\log n)/6-\sqrt{2k(k+2)\gamma}}{2}\right)
\\&\le\dfrac{1}{k!}  \exp\left(-\dfrac{(k-1/6)(\gamma-2\log n)-\sqrt{2k(k+2)\gamma}}{2}\right)
\end{align*}

Note that as long as $\gamma\ge 2\log n+4\sqrt{9+3\log n}+12$, we have 
\begin{align}
\sqrt{2k(k+2)\gamma}\le \dfrac{k(\gamma-2\log n)}{2},
\end{align}

which leads to 
\begin{align*}
\p\left(\min_{k,\tau_{1:k}}\{\Loss(\bS; \tau_{1:k})+k\gamma-\Loss^{\ast}(\bS)\}\le a(\gamma,n) \right)&\le\sum_{k=1}^\infty\dfrac{1}{k!}\exp\left(\-\dfrac{(k/2-1/6)(\gamma-2\log n)}{2}\right)\\
&\le \sum_{k=1}^\infty\dfrac{1}{2^{k-1}} \exp\left(-\dfrac{\gamma-2\log n}{6}\right)\\
&=2\exp\left(-\dfrac{\gamma-2\log n}{6}\right).
\end{align*}

Let $b(\gamma,n)=(\gamma-2\log n)/(12m^*+6)$. 
If $\gamma\ge 2\log n+96(2m^\ast+1)$, then $b(\gamma,n)\ge 16$, and, as a result, $\sqrt{4b(\gamma,n)}\le b(\gamma,n)/2 $. Hence
\begin{align*}
\p\bigg( \Loss^\ast(\bS)- \Loss(\bS;\varnothing) \geq b(\gamma,n) \bigg)=&\p\left(\chi^2_2\ge  b(\gamma,n)\right)\\
\le&\exp\left(-\dfrac{b(\gamma,n)-\sqrt{4b(\gamma,n)}}{2}\right)\\
\le&\exp\left(-\dfrac{b(\gamma,n)}{4}\right)
\\= &\exp\left(-\dfrac{\gamma-2\log n}{24(2m^\ast+1)}\right).
\end{align*}
\end{proof}

\medskip

\begin{lemma}\label{lemma.addk2.B}For any $t$, any $n\geq 4$ and any model, if $\bS=x_{t+1:t+2n}$ is a region that contains a single changepoint  at $\tau^\ast=t+n$, then for any $\gamma\ge \gamma_n^{(2)}$ where 
\begin{equation*}
\gamma_n^{(2)}=\max\left\{(3+\epsilon)\log (2n), 2\log(2n)+32\log(C\log(2n)),   2\log(2n)+972(2m^\ast+1), 3240\right\}
\end{equation*}
where $\epsilon>0$ and $C$ is a positive constant not depend on $n$, we have  
 $$\p\left(\min_{k\ge 2,\tau_{1:k}}\{\Loss({\bS}; \tau_{1:k})+(k-1)\gamma\}- \Loss^{\ast
}(\bS)\le \dfrac{\gamma-2\log (2n)}{6}\right)< \dfrac{9}{4} \exp\left(-\dfrac{\gamma-3\log(2n)}{3}\right),$$
and 
$$
\p\left(\Loss^\ast({\bS})-\min_{\tau_1}\Loss(\bS; \tau_1)\le \dfrac{\gamma-2\log (2n)}{6(2m^\ast+1)}\right) \le \exp\left(-\dfrac{\gamma-2\log (2n)}{24(2m^\ast+1)}\right).
$$
\end{lemma}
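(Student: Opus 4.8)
The plan is to follow the template of the change-in-mean argument in Lemma~\ref{lemma.addk2.A}, replacing the Bonferroni step in the first bound by the dependency-aware maximal inequalities of Lemmas~\ref{lemma.max1.B}--\ref{lemma.max3.B}. As preparation I would record the two distributional reductions that drive everything. Since adjoining the true change $\tau^\ast$ to any candidate set can only lower the fitted cost, for every $k$ and every $\tau_{1:k}$ we have
$$
\Loss^\ast(\bS)-\Loss(\bS;\tau_{1:k})\le \Loss^\ast(\bS)-\Loss(\bS;\tau_{1:k},\tau^\ast),
$$
and by Lemma~\ref{lem.chi-square2.slope} the right-hand side is the squared length of the projection of the rescaled noise onto the continuous piecewise-linear space with knots $\{\tau_{1:k},\tau^\ast\}$, a space that now contains the true signal $f^\ast$; hence it is a central $\chi^2$ whose degrees of freedom grow linearly with $k$. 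For the single-change fit I would use the same device, $\Loss^\ast(\bS)-\Loss(\bS;\tau_1)\le \Loss^\ast(\bS)-\Loss(\bS;\tau_1,\tau^\ast)$, which for each fixed $\tau_1$ is a central $\chi^2$ with a bounded number of degrees of freedom.

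For the first inequality the obstruction is combinatorial: with $k$ changes there are $\binom{2n}{k}$ candidate segmentations, so the naive bound $\sum_k\binom{2n}{k}\,\p(\chi^2\ge\cdot)$ would force a penalty $\gamma$ of order $k\log(2n)$, far above the $\log(2n)$ scale needed for $\beta=(2+\epsilon)\log T$ to suffice. This is exactly the feature of the change-in-slope problem flagged in the discussion: because the localisation rate is polynomial in $T$, the window $\bS$ carries far too many near-identical segmentations for a union bound to be tight. I would therefore invoke Lemmas~\ref{lemma.max1.B}--\ref{lemma.max3.B}, which control $\max_{\tau_{1:k}}\{\Loss^\ast(\bS)-\Loss(\bS;\tau_{1:k},\tau^\ast)\}$ directly by exploiting the strong positive dependence between the cost reductions of overlapping segmentations; the three lemmas treat the separate pieces of the projected noise and their constants combine into the $C$ entering $\gamma_n^{(2)}$. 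Given this uniform control, I would apply it for each $k\ge 2$, subtract the penalty increment $(k-1)\gamma$, and sum over $k$. The increment $(k-1)\gamma$ supplies geometric decay, so the series converges and its $k=2$ term dominates, yielding $\frac{9}{4}\exp\left(-\frac{\gamma-3\log(2n)}{3}\right)$, with the prefactor $9/4$ coming from the geometric sum together with the polynomial constants of the maximal inequalities. The side-conditions $n\ge 4$, $\gamma\ge 2\log(2n)+32\log(C\log(2n))$ and $\gamma\ge 3240$ are precisely what those maximal inequalities require and what is needed to absorb the square-root corrections in the $\chi^2$ tails.

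The second inequality is the routine part, proved as in Lemma~\ref{lemma.addk2.A}. Writing $\Loss^\ast(\bS)-\min_{\tau_1}\Loss(\bS;\tau_1)=\max_{\tau_1}\{\Loss^\ast(\bS)-\Loss(\bS;\tau_1)\}$, I would take a union bound over the $2n-1$ admissible single-change locations, apply the $\chi^2$ upper tail (\ref{ineq.chisq.upper}) of Lemma~\ref{lemma.chi-square} at each location, and check that $\gamma\ge 2\log(2n)+972(2m^\ast+1)$ makes $b(\gamma,2n)=\frac{\gamma-2\log(2n)}{6(2m^\ast+1)}$ large enough that the square-root term in the tail is at most $b/2$, so that $\p(\chi^2\ge b)\le\exp(-b/4)$; the logarithmic factor from the union bound is then carried into the exponent to give $\exp\left(-\frac{\gamma-2\log(2n)}{24(2m^\ast+1)}\right)$. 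I note that the inequality inside the probability should read ``$\ge$'' to match Condition~\ref{cond.1}(ii) and Proposition~\ref{prop:slope1}.

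The genuine obstacle, and the only ingredient that is new relative to the change-in-mean analysis, is the first inequality: establishing and applying the maximal inequalities of Lemmas~\ref{lemma.max1.B}--\ref{lemma.max3.B} in place of the failed Bonferroni bound. Everything else transcribes the reasoning of Lemmas~\ref{lemma.addk.A}--\ref{lemma.addk2.A} with the degrees of freedom adjusted for the two-parameter, continuity-constrained segments of the slope model.
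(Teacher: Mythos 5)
Your overall strategy---reduce to central chi-squares by adjoining $\tau^\ast$ to any candidate segmentation, then beat the naive union bound with the dependency-aware maximal inequalities---is the same as the paper's, but there are two genuine gaps in how you deploy it. First, for the bound on fitting $k\ge 2$ changes you propose to apply the maximal inequalities ``for each $k\ge 2$'' and sum over $k$. Lemmas \ref{lemma.max1.B}--\ref{lemma.max3.B} only control the maxima after adding one, two, or three changes beyond $\tau^\ast$; no such uniform control is stated or proved for general $k$, so your series cannot even be started. The paper's proof is a hybrid: it uses Lemmas \ref{lemma.max2.B} and \ref{lemma.max3.B} only for $k=2,3$, and for $k\ge 4$ it falls back on plain Bonferroni plus the $\chi^2_{k+3}$ tail, which works there because $(k-1)\gamma\ge 3(k-1)\log(2n)\ge\tfrac{9}{4}k\log(2n)$ dominates the entropy $\binom{2n}{k}\le (2n)^k/k!$ once $k\ge 4$. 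Your claim that Bonferroni ``would force a penalty $\gamma$ of order $k\log(2n)$'' is accurate only for $k=2$ (and marginally $k=3$); that is exactly why the paper treats only those two values specially. Accordingly, the constant $9/4$ is not the sum of a geometric series dominated by $k=2$: it is $1+1+\tfrac{1}{4}$, one unit each from the $k=2$ and $k=3$ maximal-inequality events and $\tfrac{1}{4}$ from the summed $k\ge 4$ Bonferroni terms.

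Second, and more seriously, your proof of the second inequality would fail. With the threshold $b(\gamma,2n)=\frac{\gamma-2\log(2n)}{6(2m^\ast+1)}$, a union bound over the $2n-1$ locations of $\tau_1$ gives at best $2n\exp(-b/4)=\exp\left(\log(2n)-b/4\right)$, while the stated target $\exp\left(-\frac{\gamma-2\log(2n)}{24(2m^\ast+1)}\right)$ equals exactly $\exp(-b/4)$. Absorbing the factor $2n$ into the exponent would require $b\gtrsim\log(2n)$, i.e.\ $\gamma$ exceeding an $m^\ast$-dependent multiple of $\log(2n)$---precisely what the change-in-mean hypothesis $(8m^\ast+6+\epsilon)\log(2n)$ in Lemma \ref{lemma.addk2.A} supplies, and precisely what the present lemma's hypothesis avoids: none of $(3+\epsilon)\log(2n)$, $2\log(2n)+32\log(C\log(2n))$, $2\log(2n)+972(2m^\ast+1)$, $3240$ grows like $m^\ast\log(2n)$. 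This second inequality is therefore another place where the slope problem forces the sharper technique, not the ``routine part'': the paper bounds $\Loss^\ast(\bS)-\min_{\tau_1}\Loss(\bS;\tau_1,\tau^\ast)$ via Lemma \ref{lemma.max1.B}, whose Davies-type Gaussian-process bound combined with the Laplace-transform (Birg\'e) argument replaces the union-bound factor $2n$ by $\log(2n)$, which is then absorbed by the $32\log(C\log(2n))$ term in $\gamma_n^{(2)}$. Two minor points: your observation that the displayed ``$\le$'' should be ``$\ge$'' is correct, and your distributional reductions are fine, though the relevant chi-square lemma for a region containing a true change is Lemma \ref{lem.chi-square.slope}, not Lemma \ref{lem.chi-square2.slope}.
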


\begin{proof}
By Lemma \ref{lem.chi-square.slope}, note that for any $\tau_{1:k}$ on $\bS$ and $k\ge 1$,
 $$
\Loss^\ast(\bS)-\Loss({\bS};\tau_{1:k})\le\Loss^\ast({\bS})-\Loss({\bS};\tau_{1:k},\tau^\ast)\sim\chi^2_{k+3}, \mbox{ and }
$$
$$
\Loss^\ast(\bS)-\Loss\left(\bS;\tau^\ast\right)\sim \chi^2_3.
$$

Define $a(\gamma,2n)=(\gamma-2\log(2n))/6$.
Since $\gamma>\max\{(3+\epsilon)\log(2n), 3240\}$, we have 
\begin{align}\label{ineq.slope2}
a(\gamma, 2n)\le \dfrac{\gamma}{6} \quad \mbox{ and }\quad \sqrt{2(k-1)(k+3)\gamma}\le \dfrac{(k-1)\gamma}{18}.
\end{align}

Therefore, similar to the proof of Lemma \ref{lemma.addk2.A}, for $k\ge 4$ we have
\begin{align*}
&\p\left(\min_{k\ge 4,\tau_{1:k}}\left\{\Loss({\bS}; \tau_{1:k})+(k-1)\gamma\right\}- \Loss^{\ast}(\bS)\le a(\gamma, 2n)\right)\\
\le& \sum_{k=4}^{\infty}\p\left(\max_{\tau_{1:k}}\left(\Loss(\bS)^{\ast}-\Loss(\bS; \tau_{1:k},\tau^\ast)\right)\ge (k-1)\gamma-a(\gamma, 2n)\right)\\
<& \sum_{k= 4}^{\infty}{2n \choose k} \exp\left(-\dfrac{(k-1)\gamma-a(\gamma, 2n)-\sqrt{2(k-1)(k+3)\gamma}}{2}\right)
\end{align*}
where the last inequality is due to Bonferroni correction and (\ref{ineq.chisq.upper}) in Lemma \ref{lemma.chi-square}. Together with (\ref{ineq.slope2}), we have
\begin{align*}
&\p\left(\min_{k\ge 4,\tau_{1:k}}\left\{\Loss({\bS}; \tau_{1:k})+(k-1)\gamma\right\}- \Loss^{\ast}(\bS)\le a(\gamma, 2n)\right)\\\le& \sum_{k= 4}^{\infty}{2n \choose k} \exp\left(-\dfrac{(k-1)\gamma-\gamma/6-(k-1)\gamma/18}{2}\right)\\
\le& \sum_{k=4}^{\infty}\dfrac{1}{k!}\exp\left(-\dfrac{[18(k-1)/17-1/6]\gamma-2k\log(2n)}{2} \right)\\
<& \sum_{k=4}^{\infty}\dfrac{1}{k!}\exp\left(-\dfrac{4\left(\gamma-3\log(2n)\right)}{3}\right)\\
<& \dfrac{1}{4}\exp\left(-\dfrac{4\left(\gamma-3\log(2n)\right)}{3}\right)\le \dfrac{1}{4}\exp\left(-\dfrac{\gamma-3\log(2n)}{3}\right)
\end{align*}

Now we only need to handle the case when $k=2$ and $3$, note that
$$
\max_{\tau_1,\tau_2}\left\{\Loss^\ast(\bS)-\Loss_n(\bS; \tau_{1},\tau_2)\right\}\le\max_{\tau_1,\tau_2}\left\{\Loss^\ast(\bS)-\Loss_n(\bS; \tau_{1:2}, \tau^\ast)\right\},
$$
and
$$
\max_{\tau_{1:3}}\left\{\Loss^\ast(\bS)-\Loss_n(\bS; \tau_{1:3})\right\}\le\max_{\tau_{1:3}}\left\{\Loss^\ast(\bS)-\Loss_n(\bS; \tau_{1:3},\tau^\ast)\right\}.
$$ Using the results from Lemmas \ref{lemma.max2.B} and \ref{lemma.max3.B}, if $\gamma>\max\bigg\{240, 24\log(C\log(2n))\bigg\}$, where $C=\max\big\{C_1''', C_2''', C_3''''\big\}$ is a positive constant and $C_1''', C_2''',C_3'''$ are constants introduced in Section \ref{sec:D.3}, we have both the events
 $$\max_{\tau_1,\tau_2}\left\{\Loss^\ast(\bS)-\Loss_n(\bS; \tau_{1:2}, \tau^\ast)\right\}\ge \gamma-a(\gamma, 2n)$$
 and 
$$ \max_{\tau_1,\tau_2,\tau_3}\left\{\Loss^\ast(\bS)-\Loss_n(\bS; \tau_{1:3}, \tau^\ast)\right\}\ge2\gamma-a(\gamma, 2n)$$
hold with probability less than $\exp\left(-\dfrac{\gamma-3\log(2n)}{3}\right)$. 
Therefore, by the union bound,we have
\begin{align*}
\p\left(\min_{k\ge 2,\tau_{1:k}}\left\{\Loss(\bS; \tau_{1:k})+(k-1)\gamma\right\}-\Loss^\ast(\bS)\le \dfrac{\gamma-2\log(2n)}{6} \right)< \dfrac{9}{4}\exp\left(-\dfrac{\gamma-3\log(2n)}{3}\right).
\end{align*}

Let $b(\gamma,n)=(\gamma-2\log n)/(12m^*+6)$, by Lemma \ref{lemma.max1.B}, if  $$\gamma>\max\bigg\{2\log(2n)+32\log(C\log (2n)), 2\log(2n)+972(2m^\ast+1)\bigg\},$$ we have
\begin{align*}
\p\left(\Loss^\ast({\bS})-\min_{\tau_1}\Loss(\bS; \tau_1)\ge b(\gamma, 2n)\right)
\le &\p\bigg(\Loss^\ast(\bS)-\min_{\tau_1}\Loss(\bS;\tau_1,\tau^\ast)\ge b(\gamma, 2n)\bigg)\\
 \le &\exp\left(-\dfrac{\gamma-2\log (2n)}{24(2m^\ast+1)}\right),\nonumber
\end{align*}
which completes the proof.

\end{proof}

\medskip

\begin{lemma}\label{lemma.drop1.B} For any $t$ and any model, if $\bS=x_{t+1:t+2n}$ is a region that contains a single changepoint at $\tau^\ast=t+n$ and $\Delta$ is the absolute difference between the true slopes before and after the change, then for any $8\le z\le {n^3\Delta^2}/100$ and $n\ge 2$, we have
$$\p\left\{\Loss\left({\bS};\varnothing\right)-\Loss^\ast\left({\bS}\right)\le z\right\}\le \exp\left(-{z}/{18}\right).
$$
\end{lemma}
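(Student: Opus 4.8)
The plan is to mirror the change-in-mean argument of Lemma \ref{lemma.drop1.A}, inserting as a reference point the cost $\Loss(\bS;\tau^\ast)$ of fitting the region with the single true changepoint, i.e.\ a continuous piecewise-linear fit with the knot fixed at $\tau^\ast$ and hence three free parameters. Both the no-change cost $\Loss(\bS;\varnothing)$, a single line with two parameters, and the true-parameter cost $\Loss^\ast(\bS)$ dominate $\Loss(\bS;\tau^\ast)$. Using the nested-Gaussian-model structure together with the slope chi-square identities of Lemma \ref{lem.chi-square.slope}, I would record that $\Loss(\bS;\varnothing)-\Loss(\bS;\tau^\ast)\sim\chi^2_1(\nu)$ is non-central, since a two-parameter affine fit cannot absorb the slope kink, while $\Loss^\ast(\bS)-\Loss(\bS;\tau^\ast)\sim\chi^2_3$ is central. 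Note the degrees of freedom differ from the mean case ($\chi^2_3$ in place of $\chi^2_2$) because each segment model carries one extra linear parameter, and the non-centrality $\nu$ will turn out to be cubic rather than linear in $n$.

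The crux specific to this problem is computing $\nu$. Writing the true mean on $\bS$ as an arbitrary affine function plus $\Delta$ times the hinge $h_i=(i-\tau^\ast)_+$, one has $\nu=\Delta^2\|P^\perp h\|^2/\sigma^2$, where $P^\perp$ projects onto the orthogonal complement of the affine functions on the $2n$ design points. I would carry out this elementary least-squares projection, subtracting the fitted constant and linear trend from $h$, via the closed forms for $\sum h_i$, $\sum h_i^2$, $\sum h_i p_i$ and $\sum p_i^2$ with $p_i$ the centred index. The leading behaviour is $\|P^\perp h\|^2=\tfrac{19}{96}n^3(1+o(1))$, and one checks that $\|P^\perp h\|^2\ge n^3/25$ for every $n\ge 2$, so that $\nu\ge n^3\Delta^2/25=\mS(\Delta,n)$; the sharper constant in fact yields $\nu\ge 19\,z$ whenever $z\le n^3\Delta^2/100$. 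This cubic growth is precisely what forces the factor $n^3$ in the hypotheses and underlies the polynomial (rather than logarithmic) localisation rate for slopes.

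Finally I would close the argument as in the mean case. Since $\Loss(\bS;\varnothing)-\Loss^\ast(\bS)=[\Loss(\bS;\varnothing)-\Loss(\bS;\tau^\ast)]-[\Loss^\ast(\bS)-\Loss(\bS;\tau^\ast)]$, the event $\{\Loss(\bS;\varnothing)-\Loss^\ast(\bS)\le z\}$ is contained in $\{\chi^2_1(\nu)\le 2z\}\cup\{\chi^2_3\ge z\}$, so a union bound applies. The constraint $z\le n^3\Delta^2/100$ makes $\nu$ so much larger than $2z$ that the lower-tail estimate (\ref{ineq.chisq.lower}) renders the non-central term negligible, of order $\exp(-cz)$ with $c$ large, while the upper-tail estimate (\ref{ineq.chisq.upper}) gives $\p(\chi^2_3\ge z)\le\exp(-(z-\sqrt{3(2z-3)})/2)$, which one verifies is comfortably below $\exp(-z/18)$ once $z\ge 8$. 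Adding the negligible non-central term keeps the sum below $\exp(-z/18)$, as claimed. The main obstacle is the bookkeeping in the projection step and checking that the two explicit tail estimates combine to the single clean constant $1/18$ uniformly over $z\ge 8$ and $n\ge 2$.
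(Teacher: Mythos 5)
Your proposal follows exactly the paper's route: insert $\Loss(\bS;\tau^\ast)$ as the reference point, invoke Lemma \ref{lem.chi-square.slope} for $\Loss(\bS;\varnothing)-\Loss(\bS;\tau^\ast)\sim\chi^2_1(\nu)$ and $\Loss^\ast(\bS)-\Loss(\bS;\tau^\ast)\sim\chi^2_3$, use the inclusion of the bad event in $\{\chi^2_1(\nu)\le 2z\}\cup\{\chi^2_3\ge z\}$, and finish with the two tail bounds of Lemma \ref{lemma.chi-square}, just as in the proof of Lemma \ref{lemma.drop1.A}. The genuine gap is quantitative, in the one step you flagged as the crux: the projection of the hinge $h_i=(i-\tau^\ast)_+$ onto the orthogonal complement of the affine functions satisfies
$$\|P^\perp h\|^2=\frac{n(n+1)(n-1)(2n^2+1)}{12(2n-1)(2n+1)}=\frac{n^3}{24}\bigl(1+o(1)\bigr),$$
which is precisely the paper's non-centrality formula, not $\tfrac{19}{96}n^3(1+o(1))$; your leading constant is too large by a factor of almost five. (A smaller side issue: with the correct formula, $\|P^\perp h\|^2\ge n^3/25$ actually fails at $n=2$, where the left side is $0.3<8/25$; it holds from $n=3$ onward.)

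This error is load-bearing. With the correct constant, the hypothesis $z\le n^3\Delta^2/100$ together with $\nu\ge n^3\Delta^2/25$ gives only $\nu\ge 4z$, not $\nu\ge 19z$, so the non-central term is not negligible: in the worst case $\nu=4z$, the lower-tail bound (\ref{ineq.chisq.lower}) gives $\p(\chi^2_1(\nu)\le 2z)\le\exp\bigl(-(1+2z)^2/(4+32z)\bigr)$, roughly $\exp(-z/9)$. At $z=8$ this is about $0.33$, and even your sharper central bound $\p(\chi^2_3\ge 8)\le\exp\bigl(-(8-\sqrt{39})/2\bigr)\approx 0.42$ then makes the sum about $0.75$, which exceeds $\exp(-8/18)\approx 0.64$. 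So your final verification of the clean constant $1/18$ cannot go through with correct constants; it only appeared to work because the inflated $19/96$ made the non-central term vanish. This is consistent with the paper itself: its own proof of this lemma concludes with the weaker bound $2\exp(-z/20)$ (trivially valid for small $z$), and it is that bound which is carried into Proposition \ref{prop:slope2}; the $\exp(-z/18)$ displayed in the lemma statement is inconsistent with the paper's proof and is evidently a typo. Your argument becomes correct if you repair the projection constant and settle for the $2\exp(-z/20)$ conclusion; as written, the step ``$\nu\ge 19z$, hence the non-central term is negligible and the sum stays below $\exp(-z/18)$ uniformly over $z\ge 8$'' fails.
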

\begin{proof} 

Lemma \ref{lem.chi-square.slope} suggests that $\Loss(\bS,\varnothing)-\Loss(\bS,\tau^\ast)$ and $\Loss^\ast(\bS)-\Loss(\bS,\tau^\ast)$ follows $\chi^2_1(\nu)$ and $\chi^2_3$, respectively, where
$$\nu=\Delta_n^2\dfrac{n(n+1)(n-1)}{24}\dfrac{4n^2+2}{4n^2-1}\ge \dfrac{\Delta_n^2n^3}{25}.$$ 
Following the same argument as in the proof of Lemma \ref{lemma.drop1.A}, as long as $8\le z\le n^3\Delta^2/100$,
\begin{align}
&\p\bigg(\Loss\left({\bS};\varnothing\right)-\Loss^\ast\left({\bS}\right)\le z + \Loss^\ast\left({\bS}\right)-\Loss\left({\bS, \tau^\ast}\right)\bigg)\nonumber\\
\le& 
\p\bigg(\Loss\left({\bS};\varnothing\right)-\Loss\left({\bS};\tau^\ast\right)\le 2z\bigg)+\p\bigg(\Loss^\ast\left({\bS}\right)-\Loss\left({\bS, \tau^\ast}\right)\ge z\bigg)\nonumber
\\
\le&\p\left(\chi^2_1(\nu)\le 2z\right)+\p\left(\chi^2_3\ge z\right)\nonumber\\
\le&\exp\left(-\dfrac{(1+\nu-2z)^2}{4+8\nu}\right)+\exp\left(-\dfrac{z-\sqrt{6z}}{2}\right)\\
\le&2\exp\left(-\dfrac{z}{20}\right),\nonumber
\end{align}
where the second inequality follows from Lemma \ref{lemma.chi-square}.
\end{proof}

 Note that  Lemmas \ref{lemma.addk.B}, \ref{lemma.addk2.B} and \ref{lemma.drop1.B} hold for any $\bS\in \{\bS_{1,n}(t)\}$, $\{\bS_{2,n}(t)\}$ and $\{\bS_{\Delta,n}(t)\}$, respectively. Therefore, it is straightforward to obtain Propositions \ref{prop:slope1} and \ref{prop:slope2}.

 \subsection{Proof of Theorem \ref{thm:slope}}
Similar to the proof of Theorem \ref{thm:mean}, we take $\beta=(2+\epsilon)\log T$ and  $$n_j=\min\left\{\left(\dfrac{100(\beta+a(\beta, T))}{\Delta_j^2}\right)^{1/3},\, \delta_j,\, \delta_{j+1}\right\},$$ where $a(\beta, T)=(\beta-2\log T)/6$ as indicates in Proposition \ref{prop:slope1} and let $b(\beta, T)=(\beta-2\log T)/6(2m^\ast+1)$. 

Therefore, since $\delta_T^3\Delta_T^2\ge (200+350\epsilon/3)\log T=100(\beta+a(\beta, T)$, we have
$$n_j= \left(\dfrac{(200+350\epsilon/3)\log T}{\Delta_j^{2}}\right)^{1/3}.$$  Combined with the assumption that $\Delta_T^2\ge \dfrac{8(200+250\epsilon/3)\log T}{T^2}$, thus $\max_j n_j\le T^{2/3}/2$, which leads to $\log\left(2\max_j n_j\right)\le 2/3\log T$.

Therefore, it is straightforward to verify that as long as $T$ is large enough, we have $\beta=(2+\epsilon)\log T\ge \max\{\gamma^{(1)}_{T}, \max_{j}\gamma^{(2)}_{n_j}\}$, 
where
\begin{align*} 
 \gamma_{T}^{(1)}&=\max\left\{(2+\epsilon)\log T, 2\log t+4\sqrt{9+3\log t}+12, 2\log T+96(2m^\ast+1)\right\},\\
\gamma_{n_j}^{(2)}&=\max\left\{(3+\epsilon)\log (2n_j), 2\log(2n_j)+32\log(C\log(2n_j)),   2\log(2n_j)+972(2m^\ast+1), 3240\right\},
\end{align*}
as defined in Proposition \ref{prop:slope1}. Therefore we can verify that
 
 $$\min\bigg\{p_1(\beta, T)/2, p_2(\beta, T), 4p_3\left(\beta, \max_j n_j\right)/9, p_4\left(\beta, \max_j n_j\right)\bigg\} \le T^{-\epsilon/24(2m^\ast+1)},$$
 where  $p_1(\gamma, n), p_2(\gamma, n), p_3(\gamma, n), p_4(\gamma, n)$ can be obtained from Lemmas \ref{lemma.addk.B}, \ref{lemma.addk2.B} and \ref{lemma.drop1.B}.

Applying Proposition \ref{prop:slope2},
 note that if $\bar \mS\ge 4\left(\beta+a(\beta, T)\right)\ge 32$, which is true as $T$ is large enough,
 we have 
 $$p_5\left(\bar\mS, \beta+a\left(\beta,T\right)\right)\le 2T^{-(2+7\epsilon/6)/20} \le 2T^{-\epsilon/(48m^\ast+24)}.$$

 Therefore, 
 \begin{align*}
 &\p\bigg(\hat{m}=m, \max_j|\htau_j-\tau^\ast_j|^3\Delta_j^2\le (200+350\epsilon/3)\log T \bigg)\\ 
 \ge&1-(m^\ast+1)p_1\left(\beta,T\right)-(m^\ast+1) p_2(\beta, T)-m^\ast p_3\left(\beta, \max_j n_j\right)\\
 &-m^\ast p_4\left(\beta, \max_j n_j\right)-m^\ast p_5\left(\sigst,\beta+a(\beta,T)\right)\\
 \ge& 1-(33m^\ast/4+3)T^{-\epsilon/(48m^\ast+24)}.
 \end{align*}
 \qed

\medskip

\section{Proofs of in Section~\ref{sec:change-in-spike}} \label{suppsec:change-in-spike}

\subsection{Proofs for Propositions \ref{prop:spike1} and \ref{prop:spike2}} 

\begin{lemma}\label{lemma.addk.C}For any $t$ and any model, if $\bS=x_{t+1:t+n}$ is a region contains no true changepoint, for any $$\gamma\ge \max\left\{(2+\epsilon)\log n, 2\log n+8\sqrt{16+2\log n}+32, 2\log n + 32(2m^\ast + 1)\right\},$$ where $\epsilon$ is an arbitrarily small positive constant, we have 
 $$\p\left(\min_{1\le k,\tau_{1:k}}\{\Loss\left(\bS;\tau_{1:k}\right)+k\gamma\}-\Loss^\ast\left(\bS\right)\le \dfrac{\gamma-2\log n}{4}\right)< 2\exp\left(-\dfrac{\gamma-2\log n}{4}\right),$$
  and 
  $$
\p\bigg( \Loss^\ast(\bS)- \Loss(\bS;\varnothing) \geq \dfrac{\gamma-2\log n}{4(2m^\ast+1)} \bigg) \le \exp\left(-\dfrac{\gamma-2\log n}{16(2m^\ast+1)}\right).
$$
\end{lemma}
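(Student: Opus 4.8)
The statement and its proof will mirror Lemma \ref{lemma.addk.A} for the change-in-mean problem almost verbatim; the only genuinely new ingredient is checking that the relevant cost differences again reduce to \emph{central} chi-squares despite the exponential-decay structure. The plan is as follows.

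First I would establish the two distributional facts: writing $\bS = x_{t+1:t+n}$ with true mean $c_s = \theta^\ast \alpha^{s - \tau^\ast_{j-1} - 1}$ throughout the region, I claim $\Loss^\ast(\bS) - \Loss(\bS; \tau_{1:k}) \sim \chi^2_{k+1}$ and $\Loss^\ast(\bS) - \Loss(\bS; \varnothing) \sim \chi^2_1$. Each $\Loss(\bS; \tau_{1:k})$ is a least-squares fit that projects the (standardised) data onto the $(k+1)$-dimensional space spanned by the per-segment basis vectors $\{(\alpha^{s - \tau_{i-1} - 1})_s\}$. The key spike-specific observation is that on each fitted sub-segment the true mean $c_s$ is itself a scalar multiple of the fitted basis vector, since $\theta^\ast \alpha^{s - \tau^\ast_{j-1} - 1} = [\theta^\ast \alpha^{\tau_{i-1} - \tau^\ast_{j-1}}]\,\alpha^{s - \tau_{i-1} - 1}$; hence the true mean vector lies exactly in the column space of every segmented design. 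Consequently, after factoring out $1/\sigma^2$, the difference equals the squared norm of the projection of the standardised Gaussian noise onto a subspace of dimension $k+1$ (resp.\ $1$), which is a central chi-square with $k+1$ (resp.\ $1$) degrees of freedom.

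Given these facts, the remainder is identical to the proof of Lemma \ref{lemma.addk.A}. For the first inequality I set $a(\gamma,n) = (\gamma - 2\log n)/4$, bound the minimum over $k\ge 1$ and $\tau_{1:k}$ by a Bonferroni union over the $\binom{n}{k}$ placements followed by a sum over $k$, and apply the upper tail bound (\ref{ineq.chisq.upper}) to the event $\chi^2_{k+1} \ge k\gamma - a(\gamma,n)$. Using $\binom{n}{k} \le n^k/k!$ and the hypothesis $\gamma \ge 2\log n + 8\sqrt{16 + 2\log n} + 32$, which guarantees $\sqrt{2k(k+1)\gamma} \le k(\gamma - 2\log n)/4$, the $k$-th term decays geometrically and the series sums to at most $2\exp(-(\gamma - 2\log n)/4)$. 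For the second inequality I set $b(\gamma,n) = (\gamma - 2\log n)/(8m^\ast + 4)$; the bound $\gamma \ge 2\log n + 32(2m^\ast+1)$ forces $b(\gamma,n) \ge 8$, hence $\sqrt{2b(\gamma,n)} \le b(\gamma,n)/2$, and a single application of (\ref{ineq.chisq.upper}) to $\chi^2_1 \ge b(\gamma,n)$ yields the stated $\exp(-(\gamma - 2\log n)/(16(2m^\ast+1)))$.

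The only step requiring care — and the main obstacle relative to the mean case — is the distributional claim of the second paragraph: I must verify that restarting the exponential at each fitted changepoint does not take the true mean out of the fitted span, so that all non-centrality terms vanish and we are left with purely central chi-squares. Once that elementary geometric fact is in hand, no new probabilistic estimates are needed and the combinatorial and tail-bound bookkeeping is exactly that of Lemma \ref{lemma.addk.A}.
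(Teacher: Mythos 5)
Your proposal is correct and follows essentially the same route as the paper: the paper likewise reduces everything to showing that $\Loss^\ast(\bS)-\Loss(\bS;\tau_{1:k})\sim\chi^2_{k+1}$ and $\Loss^\ast(\bS)-\Loss(\bS;\varnothing)\sim\chi^2_1$ (it does this by explicitly computing $\hat\theta_j=\theta^\ast\alpha^{\tau_{j-1}}+\sum_s\eps_s\alpha^{s-\tau_{j-1}-1}/\sum_s\alpha^{2(s-\tau_{j-1}-1)}$ per segment, which is exactly your observation that the restarted exponential keeps the true mean in the fitted span, phrased in coordinates rather than as a projection), and then invokes verbatim the Bonferroni-plus-chi-square-tail bookkeeping of Lemma \ref{lemma.addk.A} with the same choices of $a(\gamma,n)$ and $b(\gamma,n)$ that you describe.
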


\begin{proof}
We only need to consider the case that $t+1\le \tau_{1:k}\le t+n$. Without loss of generality, let $t=0$, i.e., consider $\bS=x_{1:n}$, where $x_s=\theta^\ast\alpha^{s-1}+\eps_s$.

Note that $\Loss^\ast(\bS)=\sum_{s=1}^{n}{\eps_s^2}/{\sigma^2}$.

For any $1\le \tau_{1:k}<n$, let $\tau_0=0$ and $\tau_{k+1}=n$, it is easy to derive
\begin{align*}
\Loss(\bS; \tau_{1:k})=\sum_{j=1}^{k+1}\sum_{s=\tau_{j-1}+1}^{\tau_j}\dfrac{1}{\sigma^2}\left(x_{s}-\hat\theta_{j}\alpha^{s-\tau_{j-1}-1}\right)^2,
\end{align*}
where $$\hat\theta_j=\dfrac{\sum_{s={\tau_{j-1}+1}}^{\tau_j}x_s\alpha^{s-\tau_{j-1}-1}}{\sum_{s={\tau_{j-1}+1}}^{\tau_j}\alpha^{2(s-\tau_{j-1}-1)}}=\theta^\ast\alpha^{\tau_{j-1}}+\dfrac{\sum_{s={\tau_{j-1}+1}}^{\tau_j}\eps_s \alpha^{s-\tau_{j-1}-1}}{\sum_{s={\tau_{j-1}+1}}^{\tau_j}\alpha^{2(s-\tau_{j-1}-1)}}.$$

Therefore for each $j=1,\dots, m+1$,
\begin{align*}
&\sum_{s=\tau_{j-1}+1}^{\tau_j}\dfrac{1}{\sigma^2}\eps^2_s-\sum_{s=\tau_{j-1}+1}^{\tau_j}\dfrac{1}{\sigma^2}\left(x_{s}-\hat\theta_{j+1}\alpha^{s-\tau_{j-1}-1}\right)^2\\
=&\sum_{s=\tau_{j-1}+1}^{\tau_j}\dfrac{1}{\sigma^2}\eps^2_s-\sum_{s=\tau_{j-1}+1}^{\tau_j}\dfrac{1}{\sigma^2}\left(\eps_s-\alpha^{s-\tau_{j-1}-1}\dfrac{\sum_{s={\tau_{j-1}+1}}^{\tau_j}\eps_s \alpha^{s-\tau_{j-1}-1}}{\sum_{s={\tau_{j-1}+1}}^{\tau_j}\alpha^{2(s-\tau_{j-1}-1)}}\right)^2\\
=&\dfrac{\left(\sum_{s={\tau_{j-1}+1}}^{\tau_j}\eps_s \alpha^{s-\tau_{j-1}-1}\right)^2}{{\sigma^2}\sum_{s={\tau_{j-1}+1}}^{\tau_j}\alpha^{2(s-\tau_{j-1}-1)}}.
\end{align*}
Note that $\dfrac{\sum_{s={\tau_{j-1}+1}}^{\tau_j}\eps_s \alpha^{t-\tau_{j-1}-1}}{\sigma\left(\sum_{s={\tau_{j-1}+1}}^{\tau_j}\alpha^{2(s-\tau_{j-1}-1)}\right)^{1/2}}\sim N(0,1)$. Since $\{\eps_s\}_{s=1}^{T}$ are i.i.d, we have
$
\Loss^\ast\left(\bS\right)-\Loss({\bS};\tau_{1:k})\sim \chi^2_{k+1}.
$ 

Similarly, we have 
$$
\Loss(\bS, \varnothing)=\sum_{s=1}^n\dfrac{1}{\sigma^2}\left(\eps_s-\alpha^{s-1}\dfrac{\sum_{s=1}^n\eps_s\alpha^{(s-1)}}{\sum_{s=1}^n\alpha^{2(s-1)}}\right)^2=\sum_{s=1}^n\dfrac{\eps_s^2}{\sigma^2}-\dfrac{\left(\sum_{s=1}^n\eps_s\alpha^{(s-1)}\right)^2}{\sigma^2\sum_{s=1}^n\alpha^{2(s-1)}},
$$
which leads to
$\Loss^\ast\left(\bS\right)-\Loss({\bS};\varnothing)\sim \chi^2_{1}$.

Using the same argument in the proof of Lemma \ref{lemma.addk.A} completes the proof.
\end{proof}

\begin{lemma}\label{lemma.addk2.C} For any $t$ and any model, if $\bS=x_{t+1:t+2n}$ is a region contains a single changepoint  at $\tau^\ast=t+n$, for any $$\gamma\ge \max\{(8m^\ast+6+\epsilon)\log(2n),  2\log(2n)+64(2m^\ast+1)\},$$  where $\epsilon$ is an arbitrarily small positive constant, for $n\ge 4$, we have  
 $$\p\left(\min_{k\ge 2,\tau_{1:k}}\{\Loss({\bS}; \tau_{1:k})+(k-1)\gamma\}- \Loss^{\ast
}(\bS)\le \dfrac{\gamma-2\log (2n)}{4}\right)<  \exp\left(-\dfrac{\gamma-8\log n}{4}\right),$$
and 
$$
\p\left(\Loss^\ast({\bS})-\min_{\tau_1}\Loss(\bS; \tau_1)\le \dfrac{\gamma-2\log (2n)}{4(2m^\ast+1)}\right) \le \exp\left(-\dfrac{\gamma-(8m^\ast+6)\log (2n)}{16(2m^\ast+1)}\right).
$$
\end{lemma}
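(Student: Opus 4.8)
The plan is to mirror the change-in-mean argument of Lemma~\ref{lemma.addk2.A}, the only genuinely new ingredient being the distributional reductions for the exponential-decay parameterisation. First I would record the two facts that drive everything. Writing $\bS=x_{t+1:t+2n}$ with its single true change at $\tau^\ast=t+n$, the orthogonality computation used in Lemma~\ref{lemma.addk.C} (projecting the decay-weighted residuals onto the span of the vectors $s\mapsto\alpha^{s-\tau_{j-1}-1}$ on each putative segment) shows that the true mean lies in the fitted model space whenever that space contains a break at $\tau^\ast$. Hence for any $\tau_{1:k}$ we have $\Loss^\ast(\bS)-\Loss(\bS;\tau_{1:k},\tau^\ast)\sim\chi^2_{k+2}$, and in particular $\Loss^\ast(\bS)-\Loss(\bS;\tau_1)\sim\chi^2_2$ at a single added change, since on each segment the statistic $\sum_s\eps_s\alpha^{s-\tau_{j-1}-1}/\big(\sigma(\sum_s\alpha^{2(s-\tau_{j-1}-1)})^{1/2}\big)$ is standard normal. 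Thus once the segmentation is fixed the cost reductions are genuine $\chi^2$ variables and the arguments of the change-in-mean proof transfer.

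For the first display I would proceed exactly as in Lemma~\ref{lemma.addk2.A}. Using the monotonicity $\Loss^\ast(\bS)-\Loss(\bS;\tau_{1:k})\le\Loss^\ast(\bS)-\Loss(\bS;\tau_{1:k},\tau^\ast)\sim\chi^2_{k+2}$ (adjoining the true change can only lower the fitted cost) and a Bonferroni bound over the $\binom{2n}{k}$ placements, with $a(\gamma,2n)=(\gamma-2\log(2n))/4$,
\[
\p\Big(\min_{k\ge2,\tau_{1:k}}\{\Loss(\bS;\tau_{1:k})+(k-1)\gamma\}-\Loss^\ast(\bS)\le a(\gamma,2n)\Big)\le\sum_{k\ge2}\binom{2n}{k}\p\big(\chi^2_{k+2}\ge(k-1)\gamma-a(\gamma,2n)\big).
\]
Applying the upper tail~(\ref{ineq.chisq.upper}) of Lemma~\ref{lemma.chi-square}, and using $\gamma\ge\gamma_n^{(2)}$ to guarantee $\sqrt{2(k-1)(k+2)\gamma}\le(k-1)\gamma/4$, each summand is at most $2^{-(k-1)}\exp\big(-(k-1)(\gamma-8\log(2n))/4\big)$; summing the geometric series gives $\exp\big(-(\gamma-8\log(2n))/4\big)$, which is the displayed bound.

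The second display needs a comment on orientation. The quantity $\Loss^\ast(\bS)-\min_{\tau_1}\Loss(\bS;\tau_1)$ is nonnegative, since taking $\tau_1=\tau^\ast$ already gives $\Loss(\bS;\tau^\ast)\le\Loss^\ast(\bS)$; and because $\Loss^\ast$ already uses the correct segmentation, it measures only the noise gained by refitting a single change, i.e.\ a maximum of $\chi^2$-type variables over the $\le 2n-1$ candidate locations, which is typically of order $\log(2n)$ and can never be forced to be large. Consequently only the \emph{upper} tail is non-trivial, and the bound actually required by Condition~\ref{cond.1}(ii) and by the event $E_1$ of Theorem~\ref{thm.global} is on $\p\big(\Loss^\ast(\bS)-\min_{\tau_1}\Loss(\bS;\tau_1)\ge b(\gamma,2n)\big)$; the ``$\le$'' in the displayed statement should read ``$\ge$'', exactly as in Lemma~\ref{lemma.addk2.A}. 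I would prove it by the same union bound, with $b(\gamma,2n)=(\gamma-2\log(2n))/(4(2m^\ast+1))$:
\[
\p\big(\Loss^\ast(\bS)-\min_{\tau_1}\Loss(\bS;\tau_1)\ge b(\gamma,2n)\big)\le(2n-1)\,\p\big(\chi^2_2\ge b(\gamma,2n)\big).
\]
Since $\gamma\ge2\log(2n)+64(2m^\ast+1)$ forces $b(\gamma,2n)\ge16$, so that $2\sqrt{b(\gamma,2n)}\le b(\gamma,2n)/2$, the tail~(\ref{ineq.chisq.upper}) together with absorbing the factor $2n=\exp(\log(2n))$ into the exponent yields a bound of the stated form $\exp\big(-(\gamma-(8m^\ast+6)\log(2n))/(16(2m^\ast+1))\big)$, the universal constants not being optimised.

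The main obstacle is therefore not any single step — they are line-for-line the change-in-mean argument — but rather (i) verifying that the decay-weighted least-squares fits still decompose into independent $\chi^2$ pieces with the correct degrees of freedom, which is precisely the content of the computation in Lemma~\ref{lemma.addk.C}, and (ii) recognising the correct orientation of the second bound, namely that the probability to be controlled is the upper-tail event that the best single-change refit improves on $\Loss^\ast(\bS)$ by at least $b(\gamma,2n)$, not the trivially-likely lower-tail event displayed.
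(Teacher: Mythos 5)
Your proposal matches the paper's own proof essentially line for line: the paper likewise obtains $\Loss^\ast(\bS)-\Loss(\bS;\tau_{1:k},\tau^\ast)\sim\chi^2_{k+2}$ by splitting $\bS$ at $\tau^\ast$ and reusing the projection computation from the proof of Lemma \ref{lemma.addk.C}, and then invokes verbatim the Bonferroni-plus-chi-square-tail argument of Lemma \ref{lemma.addk2.A} for both displays. Your reading of the second display's ``$\le$'' as a typo for ``$\ge$'' is also correct --- it is what Condition \ref{cond.1}(ii), Lemma \ref{lemma.addk2.A} and the definition of $p_4$ in Proposition \ref{prop:spike1} require, and your union bound over the $2n-1$ candidate locations is exactly the intended argument.
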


\begin{proof} 
For any $\tau_{1:k}$, note that
\begin{align} \lbl{eq:spike1}
\Loss^\ast(\bS)-\Loss({\bS};\tau_{1:k})\le&\Loss^\ast({\bS})-\Loss({\bS};\tau_{1:k},\tau^\ast)\nonumber\\
=&\Loss^\ast\left(x_{(t+1):(t+n)}\right)+\Loss^\ast\left(x_{(t+n+1):(t+2n)}\right)\nonumber\\
&-\Loss\left(x_{(t+1):(t+n)}; \tau_{1:k}\right)-\Loss\left(x_{(t+n+1):(t+2n)}; \tau_{1:k}\right)
\end{align}
From the proof of Lemma \ref{lemma.addk.C}, we have (\ref{eq:spike1}) follows a chi-square distribution with degrees of freedom $k+2$. Similarly, we have
\begin{align}
\Loss^\ast(\bS)-\min_{\tau_1}\Loss\left(\bS;\tau_1\right)\le \Loss^\ast(\bS)-\Loss(\bS; \tau^\ast)\sim \chi^2_2.
\end{align}
Using the same argument as in the proof of Lemma \ref{lemma.addk2.A}, we obtain the results.
\end{proof}

\begin{lemma}\label{lemma.drop1.C}If $\bS=x_{t+1:t+2n}$ is a region contains a single changepoint  at $\tau^\ast=t+n$ and $\Delta$ be the absolute difference between the true means before and after the change. For any $5\le z\le \dfrac{\Delta^2}{(1-\alpha^{2n})(1-\alpha^2)}$, we have
$$\p\left\{\Loss\left({\bS};\varnothing\right)-\Loss^\ast\left({\bS}\right)\le z\right\}\le \exp\left(-{z}/{20}\right).
$$
\end{lemma}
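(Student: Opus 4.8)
The plan is to mirror the proofs of Lemmas \ref{lemma.drop1.A} and \ref{lemma.drop1.B}, comparing the no-change fit against the idealised fit that places a change exactly at $\tau^\ast$. Set $A = \Loss(\bS;\varnothing) - \Loss(\bS;\tau^\ast)$ and $B = \Loss^\ast(\bS) - \Loss(\bS;\tau^\ast)$. First I would identify the two distributions. Since within each segment the spike cost is quadratic in the single amplitude $\theta_j$, fitting a segment amounts to an orthogonal projection onto the one-dimensional span of the decay vector, exactly as exploited in Lemma \ref{lemma.addk.C}. Hence $B \sim \chi^2_2$ (two freely estimated amplitudes, centrally distributed), and, because the no-change and change-at-$\tau^\ast$ fits are least-squares projections onto nested subspaces of dimension $1$ and $2$, their difference satisfies $A \sim \chi^2_1(\nu)$, with non-centrality $\nu$ equal to the noise-free reduction in cost achieved by the true mean signal.

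The probabilistic core is then routine and identical to the earlier lemmas. Writing $\Loss(\bS;\varnothing) - \Loss^\ast(\bS) = A - B$ and using the inclusion $\{A - B \le z\} \subseteq \{A \le 2z\} \cup \{B \ge z\}$, a union bound gives
\begin{align*}
\p\big(\Loss(\bS;\varnothing) - \Loss^\ast(\bS) \le z\big) \le \p\big(\chi^2_1(\nu) \le 2z\big) + \p\big(\chi^2_2 \ge z\big).
\end{align*}
The second term is controlled by the upper tail bound (\ref{ineq.chisq.upper}) and the first by the non-central lower tail bound (\ref{ineq.chisq.lower}), just as in Lemma \ref{lemma.drop1.A}; as long as $\nu$ exceeds a fixed multiple of $z$ (so that $2z < 1+\nu$), these combine to a bound of the stated exponential form.

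The substantive step, and the one I expect to be the main obstacle, is computing $\nu$ and relating it to the signal strength $\mS(\Delta,n) = \Delta^2/[(1-\alpha^{2n})(1-\alpha^2)]$. Here $\nu$ is the squared norm of the component of the true mean orthogonal to the single decay vector $v_s = \alpha^{s-1}$ across the region (in the cost's units). Parametrising the true mean as $a\alpha^{s-1}$ on the first half and $b\alpha^{s-n-1}$ on the second, so that $\Delta = |b - a\alpha^{n-1}|$ is the jump at the change, I would evaluate the geometric sums $\|v\|^2$, $\langle c,v\rangle$ and $\|c\|^2$ and simplify the projection residual to a closed form in $(b - a\alpha^n)$, $\alpha^{2n}$ and $1 - \alpha^2$. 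The delicate accounting is to bound this residual below by a constant multiple of $\mS(\Delta,n)$; as a sanity check, letting $\alpha \to 1$ should recover the change-in-mean non-centrality $n\Delta^2/2$ of Lemma \ref{lemma.drop1.A}. The restriction $z \le \mS(\Delta,n)$ is then precisely what keeps $2z$ below $1+\nu$, so that the non-central tail bound is applicable and the argument closes.
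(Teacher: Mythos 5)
Your proposal follows essentially the same route as the paper's proof: the same decomposition into $A=\Loss(\bS;\varnothing)-\Loss(\bS;\tau^\ast)\sim\chi^2_1(\nu)$ and $B=\Loss^\ast(\bS)-\Loss(\bS;\tau^\ast)\sim\chi^2_2$, the same inclusion $\{A-B\le z\}\subseteq\{A\le 2z\}\cup\{B\ge z\}$ with a union bound, and the same chi-square tail bounds from Lemma \ref{lemma.chi-square}. The one step you defer---computing $\nu$---is carried out in the paper exactly as you outline: setting $\eta=\alpha^n$ and projecting onto the decay vector gives $\nu=(\theta_2^\ast-\eta\theta_1^\ast)^2(1-\eta^2)/\big[(1+\eta^2)(1-\alpha^2)\big]$, which the paper bounds below by $\mS(\Delta,n)=\Delta^2/\big[(1-\alpha^{2n})(1-\alpha^2)\big]$, so the restriction on $z$ in the statement indeed makes the non-central lower-tail bound applicable, just as you predict.
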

\begin{proof} Without loss of generality we let $t=0$, i.e., $\bS=x_{1;2n}$ with $\tau^\ast=n$ is a changepoint. Therefore, we write that
$$
\begin{cases}
x_s=\theta_1^\ast\alpha^{s-1}+\eps_s,\quad 1\le s\le n\\
x_s=\theta_2^\ast\alpha^{s-n-1}+\eps_s, \quad n+1\le s\le 2n.
\end{cases}
$$
First, we have $\Loss^\ast(\bS)=\sum_{s=1}^{2n}{\eps_s^2/\sigma^2}$ and  
\begin{align} \lbl{eq:spike.omit1}
\Loss(\bS; \tau^\ast)=\sum_{s=1}^{n}\dfrac{1}{\sigma^2}\left(x_{s}-\hat\theta_{1}\alpha^{s-1}\right)^2+\sum_{s=n+1}^{2n}\dfrac{1}{\sigma^2}\left(x_{s}-\hat\theta_{2}\alpha^{s-n-1}\right)^2,
\end{align}
where $$\hat\theta_1=\theta_1^{\ast} +  \dfrac{\sum_{s={1}}^{n}\eps_s \alpha^{s-1}}{\sum_{s=1}^{n}\alpha^{2(s-1)}} \quad, \mbox{ and }\quad  \hat\theta_2=\theta_2^{\ast} + \dfrac{\sum_{s={n+1}}^{2n}\eps_s \alpha^{s-n-1}}{\sum_{s=n+1}^{2n}\alpha^{2(s-n-1)}}.
$$
Using the similar argument as in the proof of Lemma \ref{lemma.addk.C},
we can rewrite (\ref{eq:spike.omit1}) as 
\begin{align*}
\Loss(\bS; \tau^\ast)= &\sum_{s=1}^{n}\dfrac{1}{\sigma^2}\left(\eps_s-\alpha^{s-1}\dfrac{\sum_{s=1}^n\eps_s\alpha^{(s-1)}}{\sum_{s=1}^n\alpha^{2(s-1)}}\right)^2\\
&+\sum_{s=n+1}^{2n}\dfrac{1}{\sigma^2}\left(\eps_s-\alpha^{s-n-1}\dfrac{\sum_{s=n+1}^{2n}\eps_s\alpha^{(s-n-1)}}{\sum_{s=n+1}^{2n}\alpha^{2(s-n-1)}}\right)^2\\
=&\sum_{s=1}^{2n}\dfrac{1}{\sigma^2}\eps^2_s-\dfrac{\left(\sum_{s=1}^{n}\eps_s \alpha^{s-1}\right)^2}{{\sigma^2}\sum_{s=1}^{n}\alpha^{2(s-1)}}-\dfrac{\left(\sum_{s=n+1}^{2n}\eps_s \alpha^{s-n-1}\right)^2}{{\sigma^2}\sum_{s=n+1}^{2n}\alpha^{2(s-n-1)}}.
\end{align*} 

Therefore, $\Loss^\ast(\bS)-\Loss(\bS; \tau^\ast)\sim\chi^2_2$. Moreover, note $\Loss(\bS,\varnothing)=\sum_{s=1}^{2n}(x_s-\hat\theta\alpha^{s-1})^2/\sigma^2$, where we omit the changepoint and as there is a single parameter $\theta$ to estimate, let $\eta=\alpha^n$, we have
\begin{align*}
\hat\theta=
\dfrac{\theta_1}{1+\eta^2}+\dfrac{\theta_2\eta}{1+\eta^2}+\dfrac{\sum_{s=1}^{2n}\eps_s\alpha^{s-1}}{\sum_{s=1}^{2n}\alpha^{2(s--1)}},
\end{align*}
Thus, by simply algebra calculation, we obtain
$
\Loss(\bS,\varnothing)-\Loss(\bS, \tau^\ast)\sim \chi^2_{1}(v)
$
, where the non-centrality parameter 
\begin{align}
\nu=\dfrac{(\theta_2-\eta\theta_1)^2(1-\eta^2)}{(1+\eta^2)(1-\alpha^2)}=\dfrac{\Delta^2(1+\alpha^{2n})}{(1-\alpha^{2n})(1-\alpha^2)}\ge\dfrac{\Delta^2}{(1-\alpha^{2n})(1-\alpha^2)}
\end{align}

Follow the same argument in the proof of Lemma \ref{lemma.drop1.A} we obtain that,  as long as $5\le z\le \nu/4$,
\begin{align*}
&\p\left(\Loss\left({\bS};\varnothing\right)-\Loss^\ast\left({\bS}\right)\le z + \Loss^\ast\left({\bS}\right)-\Loss\left({\bS, \tau^\ast}\right)\right)\\
\le& 
\p\left(\Loss\left({\bS};\varnothing\right)-\Loss\left({\bS};\tau^\ast\right)\le 2z\right)+\p\left(\Loss^\ast\left({\bS}\right)-\Loss\left({\bS, \tau^\ast}\right)\ge z\right)
\\
\le&\p\left(\chi^2_1(\nu)\le 2z\right)+\p\left(\chi^2_2\ge z\right)\\
\le&\exp\left(-\dfrac{(1+\nu-2z)^2}{4+8\nu}\right)+\exp\left(-\dfrac{z-2\sqrt{z}}{2}\right)\\
\le&2\exp\left(-\dfrac{z}{20}\right),
\end{align*}
\end{proof}

 Note that  Lemmas \ref{lemma.addk.C}, \ref{lemma.addk2.C} and \ref{lemma.drop1.C} hold for any $\bS\in \{\bS_{1,n}(t)\}$, $\{\bS_{2,n}(t)\}$ and $\{\bS_{\Delta,n}(t)\}$, respectively. Therefore, it is straightforward to obtain Propositions \ref{prop:spike1} and \ref{prop:spike2}.

\subsection{Proof of Theorem \ref{thm:spike}}

Similar to the proof of Theorem \ref{thm:mean}, we take $\beta=(2+\epsilon)\log T$ and 
$$n_j=\min\left\{\dfrac{1}{2}\log_{\alpha}\left(1-\dfrac{\Delta_j^2}{4(1-\alpha^2)(\beta+a(\beta,T))}\right), \delta_j. \delta_{j+1}\right\},$$
 where $a(\beta, T)=(\beta-2\log T)/4$ as indicates in Proposition \ref{prop:slope1} and let $b(\beta, T)=(\beta-2\log T)/4(2m^\ast+1)$. 

Therefore, since we assume 
$\dfrac{\Delta_T^2}{(1-\alpha^{2\delta^T})(1-\alpha^2)}\ge (8+5\epsilon)\log T$, we will have each $n_j$ achieves the minumum value at
\begin{align}
\dfrac{1}{2}\log_{\alpha}\left(1-\dfrac{\Delta_j^2}{4(1-\alpha^2)(\beta+a(\beta,T))}\right)=\dfrac{1}{2}\log_{\alpha}\left(1-\dfrac{\Delta_j^2}{(1-\alpha^2)(8+5\epsilon)\log T}\right).
\end{align}

 Combined with the assumption that $$\log_{\alpha}\left(1-\dfrac{\Delta_T^2}{4(1-\alpha^2)(\beta+a(\beta,T))}\right)\le {T^{2/(8m^\ast+6+\epsilon)}},$$ which leads to $2\max_j n_j\le T^{1/4}$. Therefore, it is straightforward to verify that as long as $T$ is large enough, we have $\beta=(2+\epsilon)\log T\ge \max\{\gamma^{(1)}_{T}, \max_{j}\gamma^{(2)}_{n_j}\}$,
where
\begin{equation*}
\gamma_T^{(1)}=\max\left\{(2+\epsilon)\log T, 2\log n+8\sqrt{16+2\log T}+32, 2\log T + 32(2m^\ast + 1)\right\},
\end{equation*} 

\begin{equation*}
 \gamma_{n_j}^{(2)}=\max\left\{(8m^\ast+6+\epsilon)\log (2n_j), 2\log (2n_j)+64(2m^\ast+1)\right\},
 \end{equation*}
as defined in Proposition \ref{prop:spike1}.

Next,by Proposition \ref{prop:spike1}, we can work out $p_1(\gamma, n), p_2(\gamma, n), p_3(\gamma, n)$ and $p_4(\gamma, n)$.
Since $\beta=(2+\epsilon)\log T$,  it is straightforward that 
 $$\min\bigg\{p_1(\beta, T)/2, p_2(\beta, T), p_3\left(\beta, \max_j n_j\right), p_4\left(\beta, \max_j n_j\right)\bigg\} \le T^{-\epsilon/16(2m^\ast+1)}.$$

Moreover, note that $\bar \mS=\min_j\dfrac{\Delta_j^2}{(1-\alpha^{2n_j})(1-\alpha^2)}\ge 4\left(\beta+a(\beta, T)\right)\ge 20$ as $T$ is large enough, we have 
 $$p_5\left(\bar\mS, \beta+a\left(\beta,T\right)\right)\le 2T^{-(2+5\epsilon/4)/20} \le 2T^{-\epsilon/(32m^\ast+16)}.$$

  Therefore, 
 \begin{align*}
&\p\bigg(\hat{m}=m, \min_j\dfrac{\Delta_j^2}{\left(1-\alpha^2\right)\left(1-\alpha^{2|\htau_j-\tau^\ast_j|}\right)}\ge (8+5\epsilon)\log T \bigg) \\
 \ge&1-(m^\ast+1)p_1\left(\beta,T\right)-(m^\ast+1) p_2(\beta, T)-m^\ast p_3\left(\beta, \max_j n_j\right)
 \\&-m^\ast p_4\left(\beta, \max_j n_j\right)\\&-m^\ast p_5\left(\sigst,\beta+a(\beta,T)\right)\\
 \ge& 1-(7m^\ast+3)T^{-\epsilon/(32m^\ast+16)}.
 \end{align*}
\qed

\medskip
\subsection{Proof of Corollary \ref{coro:spike}}
Let $c_5 \ge  8+5\epsilon$, and $0<c_6^{-1}\le (1-D)c_5$, since $\alpha^{T^{2/(8m^\ast+6+\epsilon)}}\le D < 1$, we have (\ref{spike.require.1}) and (\ref{spike.require.2}) hold. Applying Theorem \ref{thm:spike} with $c_5\ge 8+5\epsilon$, since $m^\ast=o(\log T)$,  as $T\ra \infty$, we obtain
 \begin{align*}
 &\p\bigg(\hat{m}=m, \min_j\dfrac{\Delta_j^2}{\left(1-\alpha^2\right)\left(1-\alpha^{2|\htau_j-\tau^\ast_j|}\right)}\ge c_5\log T \bigg) \\
 \ge& 1-(7m^\ast+3)T^{-\epsilon/(32m^\ast+16)}\ra 1.
 \end{align*}
\qed

\section{Orthogonal basis techniques for change-in-slope}\label{subsec:basis}

This appendix provides additional technical lemmas needed for the change-in-slope problem. We believe the orthogonal basis representation and maxima inequality of correlated Gaussian variables to be of independent interest and therefore present them in this separate appendix.

Without loss of generality, we re-index the $2n$ points in a local segment $\bS=x_{t+1:t+2n}$ as $\boldsymbol{x}=(x_1,\dots, x_{2n})^T$ with a single true changepoint at $\tau^\ast=n$. Let $\boldsymbol{f}=(f_1,\dots,f_{2n})^T$ denotes the vector of the linear signals with a change of slope at $\tau^\ast$, e.g

$$
f_i=\begin{cases}
\theta_0+ \dfrac{\theta_1-\theta_0}{n}i,  &i=1,\dots,n;\\[20pt]
\theta_1+ \dfrac{\theta_2-\theta_1}{n}(i-n), \quad &i=n+1,\dots,2n,
\end{cases}
$$
where $\theta_0$, $\theta_1$ and $\theta_2$ are unknown parameters, and $\bve=(\eps_1,\dots,\eps_{2n})^T$ denote the vector of Gaussian stochastic noises. Therefore $\boldsymbol{x}=\boldsymbol{f}+\bve$.  The following basis representation in the $2n$-dimensional vector space will be used to approximate $\boldsymbol{x}$. 

\subsection{Orthogonal basis.}

By algebra calculation, we can sequentially calculate the following basis representation for the $2n-$vector $\boldsymbol{x}$.

\medskip 

{\sc Basis Representation:}
\begin{itemize}
\item[1.] Constant basis representation: $\psi_{(C)}=\left(\psi_{(C)}(1),\dots,\psi_{(C)}(2n)\right)^T$ with $\psi_{(C)}(i)=(2n)^{-1/2}$. 
\item[2.] Linear basis: $\psi_{(L)}=\left(\psi_{(L)}(1),\dots,\psi_{(L)}(2  n)\right)^T$, with
$$\psi_{(L)}(i)=\sqrt{\frac{12}{2n(2n-1)(2n+1)}} \left(i-\dfrac{2n+1}{2}\right).$$ 
Note that $\psi_{(L)}$ is orthonormal to $\psi_{(C)}$.

\item[3.] Basis corresponding to $\tau^\ast=n$: $\psi_{(\tau^\ast)}=\left(\psi_{(\tau^\ast)}(1),\dots,\psi_{(\tau^\ast)}(2n)\right)^T$, with
$$
\psi_{(\tau^\ast)}(i)=\begin{cases}
-\sqrt{\dfrac{3(n+1)}{n(4n^2-1)(2n^2+1)(n-1)}}\bigg[(4n-1)i-n(2n+1)\bigg], \,  i=1,\dots,n;\\[20pt]
\sqrt{\dfrac{3(n-1)}{n(4n^2-1)(2n^2+1)(n+1)}}\bigg[(4n+1)i-3n(2n+1)\bigg], \, i=n+1,\dots,2n.
\end{cases}
$$
 Note that $\psi_{(\tau^\ast)}$ is orthonormal to both $\psi_{(C)}$ and $\psi_{(L)}$.

\item[4.] Basis  $\psi_{(\tau_1)}$ corresponding to adding an additional change $\tau_{1}$ on $\bS$, where $2\le \tau_1\le 2n$ and $\tau_1\neq \tau^\ast$. For example, if $2\le \tau_1 \le n-1$, then

\begin{align*}
\psi_{(\tau_1)}(i)=\begin{cases}
-A_n\sqrt{\dfrac{n-\tau_1}{n(n-1)(2n^2+1)\tau_1(\tau_1-1)}} 
\bigg(a_n i-b_n\bigg), \, i=1,\dots,\tau_1;\\[20pt]
 A_n\sqrt{\dfrac{\tau_1(\tau_1-1)}{n(2n^2+1)(n-\tau_1)(n-1)}}\bigg(c_ni
 -d_n\bigg), \, i=\tau_1+1,\dots,n;\\[20pt]
 -A_n\sqrt{\dfrac{\tau_1(\tau_1-1)(n-\tau_1)(n-1)}{n(2n^2+1)}}\bigg[3i-(5n+1)\bigg], \, i=n+1,\dots,2n.
\end{cases}
\end{align*}

where 
\begin{align*}
 \hspace*{-1cm}  A_n=&\sqrt{3}(8n^3\tau_1-4n^3-13n^2\tau_1^2+9n^2\tau_1+4n^2+5n\tau_1^3\\
&-6n\tau_1^2+5n\tau_1-2n+\tau_1^3-5\tau_1^2+2\tau_1+2)^{-1/2},\\
a_n=&(4n^3+4n^2\tau_1-4n^2-5n\tau_1^2+5n\tau_1+2n-\tau_1^2+3\tau_1-2),\\
b_n=&\tau_1(4n^3-3n^2\tau_1+3n^2-2n\tau_1+4n-\tau_1+1),\\
c_n=&9n^2-5n\tau_1+n-\tau_1+2,\\
d_n=&7n^3-3n^2\tau_1+2n^2-2n\tau_1+3n-\tau_1.
\end{align*}
Similarly we can write $\psi_{(\tau_1)}$ for $n+1\le \tau_1\le 2n$. Note that $\psi_{(\tau_1)}$ is orthonormal to $\psi_{(C)}$, $\psi_{(L)}$ and $\psi_{(\tau^\ast)}$. 

\item[5.] Basis  $\psi_{(\tau_j)}$, $j=2,3,\dots$, corresponding to adding a $j$-th  change $\tau_{j}$ on $\bS$ after $\tau_1,\dots, \tau_{j-1}$,   where  $2\le \tau_{j} \le 2n$ and $\tau_j\neq \tau_{1:(j-1)} \mbox{ or } n$. Moreover $\psi_{(\tau_j)}$ is orthonormal to $\psi_{(C)}$, $\psi_{(L)}$, $\psi_{(\tau^\ast)}$ and $\psi_{(\tau_{1:(j-1)})}$.

\end{itemize}

The formulas for $\psi_{(C)}, \psi_{(L)}$ and $\psi_{(\tau^\ast)}$ were also given in \cite{baranowski2016narrow}. We derive $\psi_{(\tau_1)}$ as it will be used in the proof of Lemmas \ref{lemma.max.Gaussian.A} and \ref{lemma.max.Gaussian.B}. The formulas for $\psi_{(\tau_j)}$  can be calculated by  applying the Gram-Schmidt procedure to make the vector $\nu_{(\tau_j)}$ (linear with a kink at $\tau_j$) orthogonal to $\psi_{(C)}, \psi_{(L)}$, $\psi_{(\tau^\ast)}$and $\psi_{(\tau_{1})},..., \psi_{(\tau_{j-1})}$, where
$$\nu_{(\tau_j)}(i)=\begin{cases}0,\quad i=1,2,\dots, \tau_j\\
i-\tau_j \quad i=\tau_j+1,\dots,2n,\end{cases}$$ 

We define $\bS_{\tau_1}= \{1,\dots, 2n\}\setminus \{1, n\}$;  $\bS_{\tau_2}= \{1,\dots, 2n\}\setminus \{1, \tau_1, n\}$ for any given $\tau_1$; and $$\bS_{\tau_{k+1}}= \{1,\dots, 2n\}\setminus \{1, \tau_1,\tau_2,\dots,\tau_k, n\}, \quad \mbox{given } \tau_1, \dots, \tau_k;$$ which are the sets of possible locations for $\tau_1$, $\tau_2$ and $\tau_{k+1}$ on $\bS$, respectively. To distinguish each of $\psi_{(\tau_j)}$, we write $\psi_{(i,j)}$ as the basis formulas for $\psi_{(\tau_j)}$ at locations $i$, where $j\in \{1,2,\dots,k\}$ and $i\in \bS_{\tau_j}$.

For each orthogonal basis $\psi_{(\cdot)}$, define the coefficients that correspond to $\boldsymbol{x}$ projected onto it as $x_{{(\cdot)}}=\left<\boldsymbol{x}, \psi_{(\cdot)}\right>=\left<\boldsymbol{f},\psi_{(\cdot)}\right>+\left<\bve,\psi_{(\cdot)}\right>=f_{(\cdot)}+\eps_{(\cdot)}$. We have the following straightforward properties for the signal components $f_{(\cdot)}$ and the noise components $\eps_{(\cdot)}$.
\begin{itemize}
\item[(1)] $f_{(\tau_j)}=0$  for $j=1,2,\dots,k$. 
\item[(2)] Each of $\eps_{(\cdot)}$ are i.i.d with distribution $\cN(0,\sigma^2)$. Without loss of generality, we assume $\sigma=1$ for the rest of the section. 

\item[(3)] For any two possible locations $i$ and $j$ in $\bS_{\tau_k}$,  we have $\e \{\eps_{(i,k)}, \eps_{(j,k)}\}=\mbox{corr} \{\eps_{(i,k)}, \eps_{(j,k)}\}=\left<\psi_{(i,k)},\psi_{(j,k)}\right>$.

\end{itemize}

The cost function of fitting changes within $\bS$ therefore can be expressed using above basis representation, for example:
\begin{align*}
\Loss^\ast(\bS)
=&\|\boldsymbol{x}-f_{(C)}\psi_{(C)}-f_{(L)}\psi_{(L)}-f_{(\tau^\ast)}\psi_{(\tau^\ast)}\|^2,\\
 \Loss\left(\bS, \varnothing\right) =&\|\boldsymbol{x}-x_{(C)}\psi_{(C)}-x_{(L)}\psi_{(L)}\|^2,\\
\Loss\left(\bS, \tau^\ast\right)
=&\|\boldsymbol{x}-x_{(C)}\psi_{(C)}-x_{(L)}\psi_{(L)}-x_{(\tau^\ast)}\psi_{(\tau^\ast)}\|^2,\\
\Loss(\bS; \tau_{1:k}, \tau^\ast)=&\bigg\|\boldsymbol{x}-x_{(C)}\psi_{(C)}-x_{(L)}\psi_{(L)}-x_{(\tau^\ast)}\psi_{(\tau^\ast)}- \sum_{j=1}^k x_{(\tau_j)}\psi_{(\tau_j)}\bigg\|^2.
\end{align*}

\begin{lemma}\label{lem.chi-square.slope} 
The following differences in cost follow  chi-square distributions:

$$\Loss^\ast(\bS)-\Loss\left(\bS;\tau^\ast\right)\sim\chi^2_{3}$$, $$\Loss^\ast({\bS})-\Loss({\bS};\tau_{1:k},\tau^\ast)\sim\chi^2_{k+3}$$ and $$\Loss(\bS,\varnothing)-\Loss(\bS,\tau^\ast)\sim\chi^2_{1}(\nu),$$
where $$\nu=\dfrac{(\theta_2-2\theta_1+\theta_0)^2}{n^2}\dfrac{n(n+1)(n-1)(2n^2+1)}{12(2n-1)(2n+1)}.$$
\end{lemma}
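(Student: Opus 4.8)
The plan is to use the orthonormal basis representation to reduce each cost to a sum of squared projection coefficients, after which the chi-square distributions can be read off directly from the noise properties (i)--(iii).

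First I would complete the orthonormal system $\{\psi_{(C)},\psi_{(L)},\psi_{(\tau^\ast)},\psi_{(\tau_1)},\dots,\psi_{(\tau_k)}\}$ to a full orthonormal basis of $\R^{2n}$ by appending vectors $\phi_1,\phi_2,\dots$ spanning the orthogonal complement. The key structural fact is that the true signal $\boldsymbol{f}$, being piecewise linear with its only kink at $\tau^\ast$, lies entirely in $\mathrm{span}\{\psi_{(C)},\psi_{(L)},\psi_{(\tau^\ast)}\}$; consequently $\left<\boldsymbol{f},\psi_{(\tau_j)}\right>=0$ (which is property (i)) and $\left<\boldsymbol{f},\phi_i\right>=0$ for every appended vector. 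Writing each cost $\Loss$ as the squared norm of the residual of projecting $\boldsymbol{x}$ onto the relevant subspace, Parseval's identity expresses that residual as the sum of the squared coefficients $\left<\boldsymbol{x},\cdot\right>^2$ along the omitted basis directions; and since $\boldsymbol{f}$ has no component along those directions in the first two cases, each such coefficient reduces to a pure noise coefficient $\eps_{(\cdot)}=\left<\bve,\psi_{(\cdot)}\right>$.

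Next I would evaluate the three differences. Because $\boldsymbol{f}\in\mathrm{span}\{\psi_{(C)},\psi_{(L)},\psi_{(\tau^\ast)}\}$ we have $\Loss^\ast(\bS)=\norm{\bve}^2$, whereas $\Loss(\bS;\tau^\ast)$ is the residual of $\boldsymbol{x}$ after projecting onto those same three directions; subtracting gives $\Loss^\ast(\bS)-\Loss(\bS;\tau^\ast)=\eps_{(C)}^2+\eps_{(L)}^2+\eps_{(\tau^\ast)}^2$, a sum of three i.i.d.\ $\cN(0,1)$ squares by (ii), hence $\chi^2_3$. Including the $k$ further orthonormal directions $\psi_{(\tau_1)},\dots,\psi_{(\tau_k)}$ in the projection appends $\sum_{j=1}^k\eps_{(\tau_j)}^2$, i.e.\ $k$ more independent squared standard normals, yielding $\chi^2_{k+3}$. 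For the last claim, $\Loss(\bS;\varnothing)$ projects onto $\mathrm{span}\{\psi_{(C)},\psi_{(L)}\}$ only, so $\Loss(\bS;\varnothing)-\Loss(\bS;\tau^\ast)=x_{(\tau^\ast)}^2=(f_{(\tau^\ast)}+\eps_{(\tau^\ast)})^2$; with $\eps_{(\tau^\ast)}\sim\cN(0,1)$ this is $\chi^2_1(\nu)$ with non-centrality $\nu=f_{(\tau^\ast)}^2=\left<\boldsymbol{f},\psi_{(\tau^\ast)}\right>^2$.

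The main obstacle is the final algebraic identification of $\nu$. This amounts to substituting the explicit piecewise formula for $\psi_{(\tau^\ast)}(i)$ together with the piecewise-linear $f_i$ and evaluating the two sums $\sum_{i=1}^n$ and $\sum_{i=n+1}^{2n}$ of products of affine functions of $i$, using the standard closed forms for $\sum i$ and $\sum i^2$. The terms in $\theta_0,\theta_1,\theta_2$ must combine so that only the second difference $\theta_2-2\theta_1+\theta_0$ survives --- as it should, since orthogonality of $\psi_{(\tau^\ast)}$ to $\psi_{(C)}$ and $\psi_{(L)}$ kills any dependence on the constant and linear parts of the signal, leaving only the curvature at the kink --- and the residual polynomial in $n$ must collapse to $n(n+1)(n-1)(2n^2+1)/[12(2n-1)(2n+1)]$. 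I expect this to be a careful but routine computation, with the normalising constant built into $\psi_{(\tau^\ast)}$ precisely engineered to deliver the claimed factor.
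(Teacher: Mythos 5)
Your proposal is correct and follows essentially the same route as the paper: both express each cost as a squared residual norm in the orthonormal system $\{\psi_{(C)},\psi_{(L)},\psi_{(\tau^\ast)},\psi_{(\tau_j)}\}$, use the fact that the true signal lies in $\mathrm{span}\{\psi_{(C)},\psi_{(L)},\psi_{(\tau^\ast)}\}$ so the differences collapse to sums of squared i.i.d.\ $\cN(0,1)$ coefficients (plus the signal term $f_{(\tau^\ast)}$ in the non-central case), and identify $\nu=f_{(\tau^\ast)}^2=\left<\boldsymbol{f},\psi_{(\tau^\ast)}\right>^2$. The closing algebraic evaluation of $\nu$ that you defer is likewise stated without detail in the paper, so there is no substantive gap relative to its proof.
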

\begin{proof}
Applying the above properties and basis representation of loss functions, it is straightforward that 
\begin{align*}
\Loss^\ast(\bS)-\Loss\left(\bS;\tau^\ast\right)=&\|\boldsymbol{x}-f_{(C)}\psi_{(C)}-f_{(L)}\psi_{(L)}-f_{(\tau^\ast)}\psi_{(\tau^\ast)}\|^2\\
&-\bigg\|\boldsymbol{x}-x_{(C)}\psi_{(C)}-x_{(L)}\psi_{(L)}-x_{(\tau^\ast)}\psi_{(\tau^\ast)}\bigg\|^2\\
=&\eps^2_{(C)}+\eps^2_{(L)}+\eps^2_{(\tau^\ast)}\sim \chi^2_{3},
\end{align*}
and
\begin{align*}
\Loss^\ast({\bS})-\Loss({\bS};\tau_{1:k},\tau^\ast)=&\|\boldsymbol{x}-f_{(C)}\psi_{(C)}-f_{(L)}\psi_{(L)}-f_{(\tau^\ast)}\psi_{(\tau^\ast)}\|^2\\
&-\bigg\|\boldsymbol{x}-x_{(C)}\psi_{(C)}-x_{(L)}\psi_{(L)}-x_{(\tau^\ast)}\psi_{(\tau^\ast)}- \sum_{j=1}^k x_{(\tau_j)}\psi_{(\tau_j)}\bigg\|^2\\
=&\eps^2_{(C)}+\eps^2_{(L)}+\eps^2_{(\tau^\ast)}+\sum_{j=1}^{k}\eps^2_{(\tau_j)}\sim \chi^2_{k+3}.
\end{align*}
In addition,
\begin{align*}
\Loss(\bS,\varnothing)-\Loss(\bS,\tau^\ast)=&\|\boldsymbol{x}-f_{(C)}\psi_{(C)}-f_{(L)}\psi_{(L)}-f_{(\tau^\ast)}\psi_{(\tau^\ast)}\|^2\\
&-\|\boldsymbol{x}-x_{(C)}\psi_{(C)}-x_{(L)}\psi_{(L)}\|^2\\
=&x^2_{(\tau^\ast)}=\big\{f_{(\tau^\ast)}+\eps_{(\tau^\ast)}\big\}^2\sim \chi^2_1\left(f^2_{(\tau^\ast)}\right),
\end{align*}
where $f^2_{\tau^\ast}=\left<\boldsymbol{f},\psi_{(\tau^\ast)}\right>=(2n^2+1)\dfrac{(\theta_2-2\theta_1+\theta_0)^2}{n^2}\dfrac{n(n+1)(n-1)}{12(2n-1)(2n+1)}.$
\end{proof}

Moreover, one should noticed that if we consider $\bS'$ as a local region consisting no true changepoint, by similar arguments, we have the following lemma.
\begin{lemma} \label{lem.chi-square2.slope}
$\Loss^\ast(\bS')-\Loss(\bS'; \tau_{1:k})\sim\chi^2_{k+2}$ and
$\Loss^\ast\left(\bS'\right)-\Loss({\bS'};\varnothing)\sim \chi^2_{2}$.
\end{lemma}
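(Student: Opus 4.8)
The plan is to mirror exactly the basis-projection argument used to prove Lemma~\ref{lem.chi-square.slope}, adapted to a region $\bS'$ whose signal has no kink. First I would record the single structural fact that distinguishes a no-change region: since the mean $\boldsymbol{f}$ restricted to $\bS'$ is a single affine function, it lies in the span of $\psi_{(C)}$ and $\psi_{(L)}$, so that $f_{(\tau^\ast)}=0$ and $f_{(\tau_j)}=0$ for every candidate-change basis. Consequently, every coefficient of $\boldsymbol{x}$ along a basis vector outside $\{\psi_{(C)},\psi_{(L)}\}$ satisfies $x_{(\cdot)}=\eps_{(\cdot)}$, since $x_{(\cdot)}=f_{(\cdot)}+\eps_{(\cdot)}$.

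Second, I would rewrite the costs in the orthonormal basis. Because the continuity-constrained piecewise-linear fit with kinks at $\tau_{1:k}$ is the orthogonal projection of $\boldsymbol{x}$ onto the span of $\psi_{(C)},\psi_{(L)},\psi_{(\tau_1)},\dots,\psi_{(\tau_k)}$ (the $(k+2)$-dimensional space of continuous piecewise-linear signals with those kinks), its residual cost equals the sum of the squared coefficients of $\boldsymbol{x}$ along the complementary basis vectors; likewise $\Loss^\ast(\bS')=\norm{\boldsymbol{x}-\boldsymbol{f}}^2=\norm{\bve}^2$ equals the sum of squared noise coefficients over the whole basis. Subtracting, the complementary terms cancel because there $x_{(\cdot)}=\eps_{(\cdot)}$ by the first step, leaving
$$\Loss^\ast(\bS')-\Loss(\bS';\tau_{1:k})=\eps_{(C)}^2+\eps_{(L)}^2+\sum_{j=1}^k\eps_{(\tau_j)}^2,$$
and the identical computation with $k=0$ (fitting only $\psi_{(C)},\psi_{(L)}$) gives $\Loss^\ast(\bS')-\Loss(\bS';\varnothing)=\eps_{(C)}^2+\eps_{(L)}^2$.

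Third, I would invoke property~(ii) of the basis decomposition: the coefficients $\eps_{(C)},\eps_{(L)},\eps_{(\tau_1)},\dots,\eps_{(\tau_k)}$ are projections of the i.i.d.\ Gaussian vector $\bve$ onto mutually orthonormal directions, hence are themselves i.i.d.\ $\cN(0,1)$. The two sums of squares are therefore distributed as $\chi^2_{k+2}$ and $\chi^2_2$ respectively, which is precisely the claim.

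The main obstacle I anticipate is structural rather than distributional: I must construct, for a no-change region, the orthonormal kink bases $\psi_{(\tau_1)},\dots,\psi_{(\tau_k)}$ by Gram--Schmidt against $\psi_{(C)},\psi_{(L)}$ and one another, and verify that they remain orthonormal and linearly independent for arbitrary distinct candidate locations $\tau_{1:k}$, so that the fitted subspace genuinely has dimension $k+2$ and the projection picture is valid. This parallels the construction already carried out for $\Loss(\bS;\tau_{1:k},\tau^\ast)$ but omits the true-change basis $\psi_{(\tau^\ast)}$; once this is in place, the remainder is the same cancellation as in Lemma~\ref{lem.chi-square.slope}.
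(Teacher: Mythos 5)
Your proposal is correct and follows essentially the same route as the paper: the paper proves this lemma simply by noting it follows "by similar arguments" to Lemma \ref{lem.chi-square.slope}, i.e., by the same orthonormal basis decomposition with $\psi_{(\tau^\ast)}$ omitted, so that the no-change signal lies in ${\rm span}\{\psi_{(C)},\psi_{(L)}\}$ and the cost differences reduce to sums of $k+2$ (respectively $2$) squared i.i.d.\ standard Gaussian coefficients. Your explicit projection formulation and the Gram--Schmidt verification you flag are exactly the content the paper's abbreviated argument relies on.
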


\subsection{Maxima of correlated Gaussian variables.}  In this step, we prove a lemma that provides the upper bound of probability tail  for the maxima of a series of Gaussian random variables, i.e., $\max_{\tau_1\in\bS_{\tau_1}}\{\eps_{(\tau_1)}\}$, $\max_{\tau_2\in\bS_{\tau_2}}\{\eps_{(\tau_2)}\}$ for given $\tau_1$, and $\max_{\tau_3\in\bS_{\tau_3}}\{\eps_{(\tau_3)}\}$ for given $\tau_1$ and $\tau_2$. 

We first introduce the following Lemma \ref{lemma.davies}, which is a direct adaptation from a result in \cite{davies1977hypothesis}.
\begin{lemma}\label{lemma.davies} Let $\cG(t)$ be a Gaussian process indexed by $t\in [a,b]$, with expectation $0$ and covariance function $\e[\cG(t_1)\cG(t_2)]=\rho(t_1, t_2)$. Let $$\rho_{11}(t_1)=\dfrac{\partial^2\rho(t_1,t_2)}{\partial t_2^2}\bigg\rvert_{t_2=t_1}.$$  Then for any $z>0$:
\begin{equation}
\p\left(\sup_{ t} \cG(t)>z\right)\le \Phi(-z)+\dfrac{1}{2\pi}\exp\left(-\dfrac{z^2}{2}\right)\int_{a}^{b} |\rho_{11}(t)|^{1/2}\dt t,
\end{equation}
where $\Phi(\cdot)$ denotes the cumulative distribution function of $\cN(0,1)$.
\end{lemma}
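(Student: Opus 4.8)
The plan is to follow the classical upcrossing argument of \cite{davies1977hypothesis}, which I now sketch. Throughout I assume, as holds in our application where each $\eps_{(\tau)}$ is standard normal, that the process is standardised so that $\rho(t,t)=1$ for all $t$, and that the sample paths of $\cG$ are continuously differentiable. First I would observe that if $\sup_t \cG(t)>z$, then either the process already exceeds $z$ at the left endpoint, $\cG(a)>z$, or (by continuity) it must cross the level $z$ from below somewhere in $(a,b]$. Writing $U_z$ for the number of upcrossings of level $z$ by $\cG$ on $[a,b]$, this gives
\begin{equation*}
\left\{\sup_{t\in[a,b]}\cG(t)>z\right\}\subseteq \{\cG(a)>z\}\cup\{U_z\ge 1\},
\end{equation*}
so that by the union bound together with Markov's inequality applied to the nonnegative integer $U_z$,
\begin{equation*}
\p\left(\sup_{t}\cG(t)>z\right)\le \p(\cG(a)>z)+\e[U_z].
\end{equation*}
Since $\cG(a)\sim\cN(0,1)$, the first term equals $\Phi(-z)$, which is exactly the first term of the stated bound.

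The second step is to evaluate $\e[U_z]$ via Rice's formula, which for a smooth Gaussian process reads
\begin{equation*}
\e[U_z]=\int_a^b \e\!\left[(\cG'(t))^+\mid \cG(t)=z\right]\, p_t(z)\,\dt t,
\end{equation*}
where $p_t$ is the density of $\cG(t)$ and $x^+=\max\{x,0\}$. Because $\cG$ has constant variance, $\mathrm{Cov}(\cG(t),\cG'(t))=\tfrac12\tfrac{\dt}{\dt t}\mathrm{Var}(\cG(t))=0$, so the jointly Gaussian pair $(\cG(t),\cG'(t))$ is independent and the conditional expectation reduces to $\e[(\cG'(t))^+]$. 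Writing $\lambda(t)=\mathrm{Var}(\cG'(t))$, a mean-zero Gaussian computation gives $\e[(\cG'(t))^+]=\sqrt{\lambda(t)/(2\pi)}$, while $p_t(z)=(2\pi)^{-1/2}\exp(-z^2/2)$. Substituting,
\begin{equation*}
\e[U_z]=\frac{1}{2\pi}\exp\!\left(-\frac{z^2}{2}\right)\int_a^b \sqrt{\lambda(t)}\,\dt t.
\end{equation*}

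It then remains to identify $\lambda(t)$ with $|\rho_{11}(t)|$. Differentiating the identity $\rho(t,t)=1$ once and using the symmetry $\rho(t_1,t_2)=\rho(t_2,t_1)$ shows the first diagonal derivatives vanish; differentiating a second time gives $\partial^2_{t_2}\rho+\partial_{t_1}\partial_{t_2}\rho=0$ on $t_1=t_2$, i.e. $\lambda(t)=\partial_{t_1}\partial_{t_2}\rho|_{t_1=t_2=t}=-\rho_{11}(t)$. In particular $\rho_{11}(t)\le 0$ and $\sqrt{\lambda(t)}=\sqrt{|\rho_{11}(t)|}$, which combined with the two displays above yields the claimed inequality. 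The main obstacle is not the algebra but the rigorous justification of Rice's formula and the upcrossing bound: one must verify the sample-path regularity (almost sure continuous differentiability, and the absence of tangencies to the level $z$) under which Rice's formula holds, and these are precisely the conditions supplied by \cite{davies1977hypothesis}. I would therefore invoke that reference for the analytic details rather than reprove them, presenting the identification $\lambda(t)=|\rho_{11}(t)|$ and the evaluation of the two constituent terms as the adaptation specific to our setting.
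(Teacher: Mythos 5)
Your proposal is correct: the paper gives no proof of this lemma, stating it as a direct adaptation of Davies (1977), and your upcrossing-plus-Rice's-formula argument is precisely the argument underlying that cited result, including the decomposition $\p(\sup_t \cG(t)>z)\le \p(\cG(a)>z)+\e[U_z]$ and the identification $\var(\cG'(t))=-\rho_{11}(t)$ under the standardisation $\rho(t,t)=1$. Your explicit note that the statement implicitly requires unit variance (which holds for the projected noise variables $\eps_{(\tau)}$ in the paper's application) and that the sample-path regularity conditions are inherited from Davies is exactly the right way to fill in what the paper leaves to the reference.
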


Based on Lemma \ref{lemma.davies}, we can prove the following useful lemmas.

\begin{lemma}\label{lemma.max.Gaussian.A} There exists positive constants $C_1$, such that for any $z>0$,
\begin{equation}\label{ieq.gausain0.slope}
\p\left(\max_{\tau_1\in\bS_{\tau_1}} \eps_{(\tau_1)}>z\right)< C_1\exp\left(-\dfrac{z^2}{2}\right)\log (2n),
\end{equation}
\end{lemma}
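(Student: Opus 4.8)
The plan is to dominate the discrete maximum over $\bS_{\tau_1}$ by the supremum of an associated continuous Gaussian process and then invoke Davies' tail bound, Lemma~\ref{lemma.davies}. Since each $\psi_{(\tau_1)}$ is a unit vector, the coefficient $\eps_{(\tau_1)}=\langle\bve,\psi_{(\tau_1)}\rangle$ is marginally $\cN(0,1)$, and the correlation between two candidate locations $i,j\in\bS_{\tau_1}$ is $\langle\psi_{(i,1)},\psi_{(j,1)}\rangle$, as recorded in the stated properties of the noise coefficients. First I would extend the explicit piecewise-polynomial formula for $\psi_{(\tau_1)}$ to a real index, defining $\cG(t)=\langle\bve,\psi_{(t)}\rangle$; because the closed form differs on $2\le\tau_1\le n-1$ and $n+1\le\tau_1\le 2n$, I would treat these as two processes on $[2,n-1]$ and $[n+1,2n]$, with the second handled symmetrically. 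Bounding the discrete maximum over $\bS_{\tau_1}$ by the two continuous suprema and applying a union bound reduces the claim to controlling $\p\(\sup_t\cG(t)>z\)$ on each interval.

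To apply Lemma~\ref{lemma.davies} I would check that $\cG$ is centred with unit variance, i.e.\ $\rho(t,t)=\|\psi_{(t)}\|^2=1$, and then compute the covariance $\rho(t_1,t_2)=\langle\psi_{(t_1)},\psi_{(t_2)}\rangle$ together with the diagonal value of its second derivative, $\rho_{11}(t)=\partial_{t_2}^2\rho(t_1,t_2)|_{t_2=t_1}$. Substituting the closed forms of $A_n,a_n,b_n,c_n,d_n$ and summing the three pieces of the inner product yields an explicit rational function of $(t,n)$; differentiating twice in $t_2$ and setting $t_2=t_1=t$ then gives $\rho_{11}(t)$. Davies' bound supplies
\begin{equation*}
\p\(\sup_t\cG(t)>z\)\le \Phi(-z)+\dfrac{1}{2\pi}\exp\(-\dfrac{z^2}{2}\)\int |\rho_{11}(t)|^{1/2}\dt t .
\end{equation*}

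The decisive step is to show $\int|\rho_{11}(t)|^{1/2}\dt t\le C\log(2n)$. I expect $|\rho_{11}(t)|^{1/2}$ to remain bounded in the bulk of each interval but to develop an integrable singularity of order $1/(t-1)$ near $t=1$ and $1/(n-t)$ near $t=n$, where the segment on one side of the kink shrinks and the normalising factor in $\psi_{(\tau_1)}$ (through $\tau_1(\tau_1-1)$ or $n-\tau_1$ in the denominator) blows up; integrating such a singularity from the grid cutoff produces exactly the $\log(2n)$ factor. Finally I would use the Gaussian tail bound $\Phi(-z)\le\frac12\exp(-z^2/2)$ to absorb the first term and collect all numerical constants, together with the factor of two from the two intervals, into a single $C_1$. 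The main obstacle is the explicit computation and bounding of $\rho_{11}(t)$: the inner product of two slope-kink basis vectors is algebraically heavy, and pinning down the precise order of the endpoint singularities so as to extract the sharp logarithmic growth of the integral is the technical crux of the argument.
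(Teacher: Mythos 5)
Your plan is correct and is essentially the paper's own proof: the paper likewise passes from the discrete maximum to the supremum of a continuous-index Gaussian process on $[2,n-1]\cup[n+1,2n]$, applies Davies' bound (Lemma \ref{lemma.davies}), and obtains the $\log(2n)$ factor from exactly the endpoint singularities you predict, establishing $|\rho_{11}(x)|\le c\max\{(x-1)^{-2},(x-n)^{-2},(2n-x)^{-2}\}$ by symbolic computation (the \pkg{Psdgcd} package) applied to the explicit polynomial formulas $D_{n,1},E_{n,1},F_{n,1}$ for the extended correlation. The only substantive difference is the construction of the continuous process: you extend the basis vector $\psi_{(t)}$ to real $t$ (which then requires renormalisation, since the naive piecewise extension satisfies $\|\psi_{(t)}\|=1$ only at integer $t$), whereas the paper extends the correlation formula $\langle\psi_{(i,1)},\psi_{(j,1)}\rangle$ to real arguments and works with an abstract Gaussian process having that covariance.
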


\begin{proof}
Note that the collection of random variables $\{\eps_{(i,1)}\}$ for $i\in \bS_{\tau_1}$, which are $\bve$ projecting onto all the possible locations of $\tau_1$,  are jointly Gaussian with covariance $\e \{\eps_{(i,1)}, \eps_{(j,1)}\}=\left<\psi_{(i,1)},\psi_{(j,1)}\right>$, as each of them is a linear combination of i.i.d Gaussian variables $\eps_{1},\dots, \eps_{2n}$.

  Let $\mR_{\tau_1}=[2,n-1]\cup[n+1,  2n]$. Define a function $\rho(x,y)$ on $\mR_{\tau_1}\times \mR_{\tau_1}$ with continuous second derivatives  with respect to both components, such that for any pair $(i, j)\in \bS_{\tau_1}\times \bS_{\tau_1}$, we have $\rho(i,j)=\mbox{corr}(\eps_{(i,1)},\eps_{(j,1)})$. For example, we  could let $\rho(x,y)$ be the function that replace the discrete pair of variables $(i,j)$ in the formula of $\left<\psi_{(i,1)},\psi_{(j,1)}\right>$ by continuous pair of variables $(x,y)$. In this way,  by algebra calculation, we have
\begin{align}\label{eq.rho.slope2}
\rho(x,y)=
\begin{cases}
\sqrt{\dfrac{x(x-1)(n-y)}{y(y-1)(n-x)}}\dfrac{D_{n,1}(x,y)}{\sqrt{D_{n,1}(x) D_{n,1}(y)}},\hfill 2\le x\le y\le n-1;\\[20pt]
\sqrt{\dfrac{(n-y)(x-\tau_1)}{(n-x)(y-\tau_1)}}\dfrac{E_{n,1}(x,y)}{\sqrt{E_{n,1}(x) E_{n,1}(y)}},\hfill   x \le n-1 \mbox{ and } y\ge n+1;\\[20pt]
\sqrt{\dfrac{(2n-y)(2n-y+1)(x-n)}{(2n-x)(2n-x+1)(x-n)}}\dfrac{F_{n,1}(x,y)}{\sqrt{F_{n,1}(x) F_{n,1}(y)}}, \hfill  n+1\le x\le y\le 2n,
\end{cases}
\end{align}
 where we write

\noindent$D_{n,1}(x,y)= 2x - 2n - 3xy - 2xn + 7yn + xy^2 + 4xn^2 - 4xn^3 + 5yn^2 - y^2n + 12yn^3 - 2y^2 + 4n^2 - 4n^3 - 9y^2n^2 - 4xyn^2 + 5xy^2n - 5xyn + 2;
 $  
 $$D_{n,1}(x)=D_{n,1}(x,x), \quad D_{n,1}(y)=D_{n,1}(y,y);$$ 

 \noindent$E_{n,1}(x,y)=2x + 2y - 4n - xy - 7xn + 9yn + 4xn^2 + 4xn^3 + 4yn^2 + 36yn^3 + 7n^2 - 8n^3 - 12n^4 - 20xyn^2 + 2$, 
$$E_{n,1}(x)=E_{n,1}(x,x), \quad E_{n,1}(y)=E_{n,1}(y,y);$$ 

\noindent$F_{n,1}(x,y)=2y - 2n - 3xy + 7xn + 8yn + x^2y - xn^2 - x^2n - 8xn^3 - 2yn^2 - 8yn^3 - 2x^2 - 6n^2 + 2n^3 + 4n^4 + x^2n^2 + 16xyn^2 - 5x^2yn + xyn + 2$;
$$F_{n,1}(x)=F_{n,1}(x,x), \quad  F_{n,1}(y)=F_{n,1}(y,y).$$

If $x>y$, note that $\rho(x,y)=\rho(y,x)$, which completes the defination of $\rho(x,y)$. Therefore, we can construct a Gaussian process $\cG(t)$ indexed by  $\mR_{\tau_1}$, with mean $0$ and covariance function $\e[\cG(t_1) \cG(t_2)]=\rho(t_1, t_2)$, such that $\cG(i)=\eps_{(i)}$ for $i\in \bS_{\tau_1}$.

Notice that for any $2\le x\le n-1$, we obtain
\begin{align*}
\rho_{11}(x) = &\dfrac{1}{4}\left(\dfrac{1}{n-x}+\dfrac{1}{x}+\dfrac{1}{x-1}\right)^2-\dfrac{1}{2}\bigg\{\dfrac{1}{(n-x)^2}-\dfrac{1}{x^2}-\dfrac{1}{(x-1)^2}\bigg\}\\
&+\dfrac{1}{2 D_{n,1}(x)}\left(\dfrac{1}{n-x}+\dfrac{1}{x}+\dfrac{1}{x-1}\right)
\bigg\{\dfrac{\partial D_{n,1}(x)}{\partial x}-2\dfrac{\partial D_{n,1}(x,y)}{\partial y}\bigg\vert_{y=x}\bigg\}\\
&+\dfrac{1}{2D_{n,1}(x)}\bigg\{2\dfrac{\partial^2 D_{n,1}(x,y)}{\partial y^2}\bigg\vert_{y=x}- \dfrac{\partial^2 D_{n,1}(x)}{\partial x^2}\bigg\}\\
&-\dfrac{1}{2 D^2_{n,1}(x)}\left(\dfrac{1}{n-x}+\dfrac{1}{x}+\dfrac{1}{x-1}\right)
\bigg\{2\dfrac{\partial D_{n,1}(x)}{\partial x}\dfrac{\partial D_{n,1}(x,y)}{\partial y}\bigg\vert_{y=x}-\dfrac{3}{2}\left(\dfrac{\partial D_{n,1}(x)}{\partial x}\right)^2\bigg\}.
\end{align*}
 Let $g(x)=\dfrac{2}{x-1}+\dfrac{2}{n-x}$, applying the Maple program \pkg{Psdgcd} \citep{han2016proving} which can prove polynomial inequalities using symbolic computation, we prove that:
\begin{equation}\label{ieq.Dn.slope1}
\bigg\vert\dfrac{\partial D_{n,1}(x,y)/\partial y\vert_{y=x}}{D_{n,1}(x)}\bigg\vert \le g(x), \quad \bigg\vert\dfrac{\partial D_{n,1}(x)/\partial x}{D_{n,1}(x)}\bigg\vert \le g(x),
\end{equation}
and 
\begin{equation}\label{ieq.Dn.slope2}
  \bigg\vert\dfrac{\partial^2 D_{n,1}(x,y)/\partial y^2\vert_{y=x}}{D_{n,1}(x)}\bigg\vert \le g(x), \quad \bigg\vert\dfrac{\partial^2 D_{n,1}(x)/\partial x^2}{D_{n,1}(x)}\bigg\vert \le g(x).
\end{equation}

Hence, for $2\le x\le n-1$, the following inequality holds for some  $c>0$,
$$|\rho_{11}(x)| < c \max\left\{\dfrac{1}{(x-1)^2}, \dfrac{1}{(n-x)^2}\right\}.$$   
In a similar way, we can also obtain that for $\tau_1+1\le x\le n-1$, the following inequality holds for some $c>0$, 
$$|\rho_{11}(x)| < c \max\left\{\dfrac{1}{(x-n)^2}, \dfrac{1}{(2n-x)^2}\right\}.$$

Altogether, we have that there exists an absolute constant $C_1$ that does not depend on $n,\tau_1$, such that for any $x\in \mR_{\tau_1}$ the following inequality holds: 
$$|\rho_{11}(x)|\le 
4\pi^2 C_1^2 \max\left\{\dfrac{1}{(x-1)^2},  \dfrac{1}{(x-n)^2}, \dfrac{1}{(2n-x)^2}
\right\}.
$$

By Lemma \ref{lemma.davies}, we have
\begin{align*}
\p\left(\sup_{ t\in  \mR_{\tau_1}} \cG(t)>z\right)=& \dfrac{1}{2\pi}\exp\left(-\dfrac{z^2}{2}\right)\int_{\mR_{\tau_1}} |\rho_{11}(t)|^{1/2}\dt t\\
=&  C_1\exp\left(-\dfrac{z^2}{2}\right)\log (2n-4).
\end{align*}

As a result, we obtain
\begin{align*}
\p\left(\max_{\tau_2\in \bS_{\tau_1}} \eps_{(\tau_2)}>z\right)\le\p\left(\sup_{t\in \mR_{\tau_1}} \cG(t)>z\right)= C_1\exp\left(-\dfrac{z^2}{2}\right)\log (2n-4),
\end{align*}
which proves (\ref{ieq.gausain0.slope}).

\end{proof}

\begin{lemma}\label{lemma.max.Gaussian.B} There exists absolute constants $C_2, C_3$, such that for any given $\tau_1$,
\begin{equation}\label{ieq.gausain1.slope}
\p\left(\max_{\tau_2\in \bS_{\tau_2}} \eps_{(\tau_2)}>z\right)< C_2\exp\left(-\dfrac{z^2}{2}\right)\log (2n),
\end{equation}
and for any given $\tau_1$, $\tau_2$,
\begin{equation}\label{ieq.gausain2.slope}
\p\left(\max_{\tau_3\in \bS_{\tau_3}} \eps_{(\tau_3)}>z\right)< C_3\exp\left(-\dfrac{z^2}{2}\right)\log (2n).
\end{equation}

\end{lemma}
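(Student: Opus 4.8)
The plan is to follow the same route as in the proof of Lemma \ref{lemma.max.Gaussian.A}, applying the Davies tail bound (Lemma \ref{lemma.davies}) to a Gaussian process that interpolates the relevant discrete family of projected noise coefficients. For the $\tau_2$ bound I would fix $\tau_1$ and observe, exactly as before, that the variables $\{\eps_{(\tau_2)} : \tau_2 \in \bS_{\tau_2}\}$ are jointly Gaussian with mean zero and unit variance, since each equals $\left<\bve, \psi_{(\tau_2)}\right>$ for a unit vector $\psi_{(\tau_2)}$, and that their correlations are $\left<\psi_{(i,2)},\psi_{(j,2)}\right>$. I would then build a smooth correlation function $\rho(x,y)$ on $\mR_{\tau_2}\times\mR_{\tau_2}$, where $\mR_{\tau_2}$ is the union of intervals obtained by deleting the knot points $\{1,\tau_1,n\}$ from $[2,2n]$, agreeing with $\left<\psi_{(i,2)},\psi_{(j,2)}\right>$ at integer pairs, and construct a Gaussian process $\cG$ with $\cG(i)=\eps_{(i,2)}$ for $i\in\bS_{\tau_2}$.

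The core of the argument is the diagonal curvature bound on $\rho_{11}(x)=\left.\partial^2\rho(x,y)/\partial y^2\right|_{y=x}$. Because $\psi_{(\tau_2)}$ is produced by Gram--Schmidt orthogonalisation of the kink vector $\nu_{(\tau_2)}$ against $\psi_{(C)}, \psi_{(L)}, \psi_{(\tau^\ast)}$ and $\psi_{(\tau_1)}$, I expect the resulting $\rho$ to be singular only at the knot locations $\{1,\tau_1,n\}$, each contributing a term of the form $1/(x-a)^2$ to $\rho_{11}$. The claim to establish is therefore a uniform bound
$$
|\rho_{11}(x)| \le C \max\left\{\frac{1}{(x-1)^2}, \frac{1}{(x-\tau_1)^2}, \frac{1}{(x-n)^2}, \frac{1}{(2n-x)^2}\right\}
$$
with $C$ an absolute constant independent of $n$ and $\tau_1$. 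Mirroring the treatment of (\ref{ieq.Dn.slope1})--(\ref{ieq.Dn.slope2}), I would reduce this to a finite list of polynomial inequalities relating the numerator and denominator of $\rho$ and their first two derivatives, and discharge them by symbolic computation with \pkg{Psdgcd} \citep{han2016proving}. Integrating the square root of this bound over $\mR_{\tau_2}$ yields a sum of terms each of order $\log(2n)$; since the number of knots is bounded, Lemma \ref{lemma.davies} then delivers (\ref{ieq.gausain1.slope}) with an absolute constant $C_2$.

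The $\tau_3$ bound (\ref{ieq.gausain2.slope}) follows by the same scheme, now fixing both $\tau_1$ and $\tau_2$ and orthogonalising against the extra vector $\psi_{(\tau_2)}$; the singularity set enlarges to $\{1,\tau_1,\tau_2,n\}$ but stays of fixed cardinality, so the integral remains $O(\log(2n))$ and produces $C_3$. The main obstacle is purely algebraic: the correlation functions for $\tau_2$ and $\tau_3$ are markedly more intricate than the $D_{n,1}, E_{n,1}, F_{n,1}$ expressions of Lemma \ref{lemma.max.Gaussian.A}, because each Gram--Schmidt step adds a rank-one correction that must be propagated through two derivatives, and the piecewise domain now has more breakpoints (hence more cases for the relative ordering of $x$, $y$, $\tau_1$, $\tau_2$ and $n$). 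Verifying that every resulting polynomial inequality holds with constants independent of $n$, $\tau_1$ and $\tau_2$ is the delicate, computation-heavy heart of the proof.
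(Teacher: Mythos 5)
Your proposal follows essentially the same route as the paper's own proof: fix $\tau_1$, extend the discrete correlations $\left<\psi_{(i,2)},\psi_{(j,2)}\right>$ to a smooth $\rho(x,y)$ on the knot-deleted domain $\mR_{\tau_2}$, bound the diagonal curvature $\rho_{11}(x)$ by an absolute constant times $\max\left\{(x-1)^{-2},(\tau_1-x)^{-2},(x-n)^{-2},(2n-x)^{-2}\right\}$ via \pkg{Psdgcd}-verified polynomial inequalities, and integrate its square root in Lemma \ref{lemma.davies} to get the $\log(2n)$ factor, with the $\tau_3$ case handled analogously. The paper does exactly this (including writing out the explicit polynomials $D_{n,2}$, $E_{n,2}$, $F_{n,2}$ and omitting the $\tau_3$ computation as similar), so your outline matches it faithfully.
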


\begin{proof}
 For a given $\tau_1$, note that  $\{\eps_{(i,2)}\}_{i\in \bS_{\tau_2}}$  are jointly Gaussian with covariance $\e \{\eps_{(i,2)}, \eps_{(j,2)}\}=\left<\psi_{(i,2)},\psi_{(j,2)}\right>$. Due to the symmetry of the local region $\boldsymbol x$ (having a change in the middle),  we only need to deal with $2\le \tau_1 \le n-1$.  

  Let $\mR_{\tau_2}=[2, \tau_1-1]\cup[\tau_1+1,n-1]\cup[n+1,  2n]$. Define $\rho(x,y): \mR_{\tau_2}\times \mR_{\tau_2}\ra [-1,1]$ as the function that replace the discrete pair $(i,j)$ in the formula of $\left<\psi_{(i,2)},\psi_{(j,2)}\right>$ by continuous pair $(x,y)$. In this way,  by algebra calculation, we have
\begin{align}\label{eq.rho.slope1}
\rho(x,y)=
\begin{cases}
\sqrt{\dfrac{x(x-1)(\tau_1-y)}{y(y-1)(\tau_1-x)}}\dfrac{D_{n,2}(x,y)}{\sqrt{D_{n,2}(x) D_{n,2}(y)}},\hfill 2\le x\le y\le \tau_1-1;\\[20pt]
\sqrt{\dfrac{(n-y)(x-\tau_1)}{(n-x)(y-\tau_1)}}\dfrac{E_{n,2}(x,y)}{\sqrt{E_{n,2}(x) E_{n,2}(y)}},\hfill \tau_1+1 \le x\le y\le n-1;\\[20pt]
\sqrt{\dfrac{(2n-y)(2n-y+1)(x-n)}{(2n-x)(2n-x+1)(x-n)}}\dfrac{F_{n,2}(x,y)}{\sqrt{F_{n,2}(x) F_{n,2}(y)}}, \hfill \quad n+1\le x\le y\le 2n.
\end{cases}
\end{align}
where we write

\noindent$
D_{n,2}(x,y)=2n - 11 n^2 \tau_1^2  + 11 n^2\tau_1^3  - 22 n^3 \tau_1^2  - 5n^2\tau_1^4  + 13n^3\tau_1^3  - 8n^4\tau_1^2  + 2n^2y^2  - 4n^3y^2  + 4n^4y^2
- 3\tau_1^2y^2  + 6\tau_1^3  y^2  - 3\tau_1^4y^2  + 2 n x - 7 n\tau_1^2  + 7 n^2  \tau_1 + 6 n\tau_1^3  + 5 n^3\tau_1 - n\tau_1^4  + 12 n^4\tau_1
   - 2 n^2x + 4n^3x - 4n^4x - 2ny^2  + 3\tau_1^2 y - 6\tau_1^3y + 3\tau_1^4y - 2 n^2 + 4n^3  - 4n^4  - 7n\tau_1^2x
   + 7n^2\tau_1 x + 6n\tau_1^3 x + 5 n^3\tau_1 x - n\tau_1^4x + 12n^4\tau_1 x + 15 n\tau_1^2 y - 4 n^2\tau_1 y - 28 n\tau_1^3 y
   + 8 n^3\tau_1 y + 9 n \tau_1^4 y - 8 n^4\tau_1 y - 2 nxy^2  + 3\tau_1 x y^2  + 6 \tau_1^2  x y - 3\tau_1^3  x y - 11 n^2\tau_1^2 x
   + 11 n^2\tau_1^3 x - 22 n^3\tau_1^2 x - 5n^2\tau_1^4 x + 13n^3\tau_1^3 x - 8n^4\tau_1^2 x - 4 n\tau_1^2 y^2  - 7 n^2\tau_1 y^2
   + 29 n^2\tau_1^2 y + 12 n\tau_1^3 y^2  - 32 n^2\tau_1^3 y - 5n^3\tau_1 y^2  + 31n^3 \tau_1^2 y - 6 n\tau_1^4 y^2  + 15 n^2\tau_1^4 y
   - 39n^3\tau_1^3 y - 12n^4 \tau_1 y^2  + 24 n^4\tau_1^2 y + 2n^2 xy^2  - 4n^3xy^2  + 4n^4xy^2  - 6\tau_1^2xy^2  + 3\tau_1^3xy^2
   + 4 n\tau_1 y - 3\tau_1 x y - 8n^2\tau_1^2y^2  + 9n^2\tau_1^3y^2  + 9n^3\tau_1^2 y^2  - 11n\tau_1 x y - 14n^2 \tau_1^2xy^2
   + 9 n \tau_1 x y^2  + 18 n\tau_1^2  x y - 19 n^2\tau_1 x y - 7 n\tau_1^3xy - 13n^3\tau_1 xy - 8n^4\tau_1 x y
   - 13 n\tau_1^2  x y^2  + 14 n^2 \tau_1 xy^2  + 20 n^2\tau_1^2 xy + 6 n\tau_1^3xy^2  - 5 n^2\tau_1^3 xy + 4n^3\tau_1 xy^2
+ 13 n^3\tau_1^2  x y
 ,$  
  $$D_{n,2}(x)=D_{n,2}(x,x), D_{n,2}(y)=D_{n,2}(y,y);$$ 
\noindent $E_{n,2}(x,y)=10x^2y^2n^2\tau_1 - 5x^2y^2n^2 - 6x^2y^2n\tau_1^2 + 8x^2y^2n\tau_1 - x^2y^2n - 3x^2y^2\tau_1^2 + 3x^2y^2\tau_1 - 8x^2yn^3\tau_1 + 4x^2yn^3 - 6x^2yn^2\tau_1^2 - 4x^2yn^2\tau_1 + 5x^2yn^2 + 6x^2yn\tau_1^3 - 3x^2yn\tau_1^2 - 9x^2yn\tau_1 + 3x^2yn + 3x^2y\tau_1^3 - 3x^2y\tau_1 - 8x^2n^4\tau_1 + 4x^2n^4 + 21x^2n^3\tau_1^2 - 13x^2n^3\tau_1 - 4x^2n^3 - 9x^2n^2\tau_1^3 + 15x^2n^2\tau_1^2 - 10x^2n^2\tau_1 + 2x^2n^2 - 6x^2n\tau_1^3 + 15x^2n\tau_1^2 - 5x^2n\tau_1 - 2x^2n - 3x^2\tau_1^3 + 3x^2\tau_1^2 - 18xy^2n^3\tau_1 + 9xy^2n^3 + 10xy^2n^2\tau_1^2 - 17xy^2n^2\tau_1 + 6xy^2n^2 + 8xy^2n\tau_1^2 - 13xy^2n\tau_1 + 3xy^2n + 3xy^2\tau_1^2 - 3xy^2\tau_1 + 24xyn^4\tau_1 - 12xyn^4 - 8xyn^3\tau_1^2 + 22xyn^3\tau_1 - 9xyn^3 - 4xyn^2\tau_1^2 + 23xyn^2\tau_1 - 12xyn^2 - 6xyn\tau_1^3 - 3xyn\tau_1^2 + 12xyn\tau_1 - 3xyn - 3xy\tau_1^3 + 3xy\tau_1 - 8xn^4\tau_1^2 + 4xn^4\tau_1 - 13xn^3\tau_1^2 + 17xn^3\tau_1 + 9xn^2\tau_1^3 - 19xn^2\tau_1^2 + 8xn^2\tau_1 + 6xn\tau_1^3 - 11xn\tau_1^2 + 7xn\tau_1 + 3x\tau_1^3 - 3x\tau_1^2 + 9y^2n^3\tau_1^2 - 9y^2n^3 - 5y^2n^2\tau_1^3 + y^2n^2\tau_1^2 + 5y^2n^2\tau_1 - y^2n^2 - y^2n\tau_1^3 + 2y^2n\tau_1^2 + y^2n\tau_1 - 2y^2n - 12yn^4\tau_1^2 + 12yn^4 + 4yn^3\tau_1^3 - 5yn^3\tau_1^2 - 4yn^3\tau_1 + 5yn^3 + 5yn^2\tau_1^3 - 7yn^2\tau_1^2 - 5yn^2\tau_1 + 7yn^2 + 3yn\tau_1^3 - 3yn\tau_1 + 4n^4\tau_1^3 + 4n^4\tau_1^2 - 4n^4\tau_1 - 4n^4 - 4n^3\tau_1^3 - 4n^3\tau_1^2 + 4n^3\tau_1 + 4n^3 + 2n^2\tau_1^3 + 2n^2\tau_1^2 - 2n^2\tau_1 - 2n^2 - 2n\tau_1^3 - 2n\tau_1^2 + 2n\tau_1 + 2n;
 $
 $$E_{n,2}(x)=E_{n,2}(x,x), \quad E_{n,2}(y)=E_{n,2}(y,y);$$
 \noindent$F_{n,2}(x,y)=2y - 4n + 2\tau_1 + 17n^2\tau_1^2 - 17n^2\tau_1^3 + 53n^3\tau_1^2 - 32n^3\tau_1^3 + 32n^4\tau_1^2 + 16n^4\tau_1^3 - 52n^5\tau_1^2 + 28n^5\tau_1^3 - 44n^6\tau_1^2 - 3xy + 7xn + 6yn + 2y\tau_1 + 3n\tau_1 + x^2y - 8xn^2 + x^2n + 7xn^3 - 8xn^4 - 14xn^5 + 16xn^6 - 6yn^2 + 6yn^3 - 12yn^4 - 12yn^5 + 16yn^6 - 2x^2\tau_1 - 5y\tau_1^2 + y\tau_1^3 - n\tau_1^2 - 2n^2\tau_1 + 4n\tau_1^3 - 23n^3\tau_1 - 44n^4\tau_1 + 4n^5\tau_1 + 44n^6\tau_1 + 16n^7\tau_1 - 2x^2 - 6n^4 + 12n^5 + 4n^6 - 8n^7 - 5\tau_1^2 + \tau_1^3 - 2x^2n^2 + x^2n^3 + 4x^2n^4 - 2x^2n^5 + 5x^2\tau_1^2 - x^2\tau_1^3 + 9xyn^2 - 6x^2yn - 8xyn^3 + 30xyn^4 - 32xyn^5 + 12xy\tau_1^2 + x^2y\tau_1 - 6xy\tau_1^3 - 22xn\tau_1^2 + 21xn^2\tau_1 - 6x^2n\tau_1 + 8xn\tau_1^3 + 39xn^3\tau_1 - xn^4\tau_1 - 58xn^5\tau_1 - 32xn^6\tau_1 - 35yn\tau_1^2 + 36yn^2\tau_1 + 18yn\tau_1^3 + 49yn^3\tau_1 - 6yn^4\tau_1 - 62yn^5\tau_1 - 32yn^6\tau_1 + 7x^2yn^2 - 12x^2yn^3 + 10x^2yn^4 - 7x^2y\tau_1^2 + 5x^2y\tau_1^3 - 41xn^2\tau_1^2 + 13x^2n\tau_1^2 - 15x^2n^2\tau_1 + 35xn^2\tau_1^3 - 36xn^3\tau_1^2 - 10x^2n\tau_1^3 - 10x^2n^3\tau_1 - 11xn^3\tau_1^3 + 53xn^4\tau_1^2 + 5x^2n^4\tau_1 - 38xn^4\tau_1^3 + 70xn^5\tau_1^2 + 4x^2n^5\tau_1 - 59yn^2\tau_1^2 + 37yn^2\tau_1^3 - 35yn^3\tau_1^2 - 18yn^3\tau_1^3 + 64yn^4\tau_1^2 - 38yn^4\tau_1^3 + 70yn^5\tau_1^2 + 4xyn - 3xy\tau_1 + 7xn\tau_1 + 13yn\tau_1 + 18x^2n^2\tau_1^2 - 2x^2n^2\tau_1^3 - x^2n^3\tau_1^2 + 7x^2n^3\tau_1^3 - 11x^2n^4\tau_1^2 - 11xyn\tau_1 + 13x^2yn^2\tau_1^2 - 17x^2yn^2\tau_1^3 + 37x^2yn^3\tau_1^2 + 29xyn\tau_1^2 - 13xyn^2\tau_1 + 2x^2yn\tau_1 - 25xyn\tau_1^3 + 37xyn^3\tau_1 + 94xyn^4\tau_1 + 64xyn^5\tau_1 - 10xyn^2\tau_1^2 + 5x^2yn\tau_1^2 - 8x^2yn^2\tau_1 + 15xyn^2\tau_1^3 - 77xyn^3\tau_1^2 - 23x^2yn^3\tau_1 + 58xyn^3\tau_1^3 - 122xyn^4\tau_1^2 - 20x^2yn^4\tau_1 + 2$, 
 $$F_{n,2}(x)=F_{n,2}(x,x), \quad F_{n,2}(y)=F_{n,2}(y,y).$$

The formula of $p(x,y)$ for other cases, such as $x<\tau_1<y$ and $x<n<y$ can be derived similarly. Moreover, if $x>y$, note that $\rho(x,y)=\rho(y,x)$. Therefore, we can define a Gaussian process $\cG(t)$ indexed by  $\mR_{\tau_2}$, with mean $0$ and covariance function $\e[\cG(t_1) \cG(t_2)]=\rho(t_1, t_2)$, such that $\cG(i)=\eps_{(i,2)}$ for $i\in \bS_{\tau_2}$.

Notice that for any $2\le x\le \tau_1-1$, we obtain
\begin{align*}
\rho_{11}(x) = &\dfrac{1}{4}\left(\dfrac{1}{\tau_1-x}+\dfrac{1}{x}+\dfrac{1}{x-1}\right)^2-\dfrac{1}{2}\bigg\{\dfrac{1}{(\tau_1-x)^2}-\dfrac{1}{x^2}-\dfrac{1}{(x-1)^2}\bigg\}\\
&+\dfrac{1}{2 D_{n,2}(x)}\left(\dfrac{1}{\tau_1-x}+\dfrac{1}{x}+\dfrac{1}{x-1}\right)
\bigg\{\dfrac{\partial D_{n,2}(x)}{\partial x}-2\dfrac{\partial D_{n,2}(x,y)}{\partial y}\bigg\vert_{y=x}\bigg\}\\
&+\dfrac{1}{2D_{n,2}(x)}\bigg\{2\dfrac{\partial^2 D_{n,2}(x,y)}{\partial y^2}\bigg\vert_{y=x}- \dfrac{\partial^2 D_{n,2}(x)}{\partial x^2}\bigg\}\\
&-\dfrac{1}{2 D^2_n(x)}\left(\dfrac{1}{\tau_1-x}+\dfrac{1}{x}+\dfrac{1}{x-1}\right)
\bigg\{2\dfrac{\partial D_{n,2}(x)}{\partial x}\dfrac{\partial D_{n,2}(x,y)}{\partial y}\bigg\vert_{y=x}-\dfrac{3}{2}\left(\dfrac{\partial D_{n,2}(x)}{\partial x}\right)^2\bigg\}.
\end{align*}
 Let $g(x)=\dfrac{2}{x-1}+\dfrac{2}{\tau_1-x}$, with the help of  \pkg{Psdgcd}, we can prove that:
\begin{equation}\label{ieq.Dn.slope2.1}
\bigg\vert\dfrac{\partial D_{n,2}(x,y)/\partial y\vert_{y=x}}{D_{n,2}(x)}\bigg\vert \le g(x), \quad \bigg\vert\dfrac{\partial D_{n,2}(x)/\partial x}{D_{n,2}(x)}\bigg\vert \le g(x),
\end{equation}
and 
\begin{equation}\label{ieq.Dn.slope2.2}
  \bigg\vert\dfrac{\partial^2 D_{n,2}(x,y)/\partial y^2\vert_{y=x}}{D_{n,2}(x)}\bigg\vert \le g(x), \quad \bigg\vert\dfrac{\partial^2 D_{n,2}(x)/\partial x^2}{D_{n,2}(x)}\bigg\vert \le g(x).
\end{equation}

Hence, for $2\le x\le \tau_1-1$, the following inequality holds,
$$|\rho_{11}(x)| < c \max\left\{\dfrac{1}{(x-1)^2}, \dfrac{1}{(\tau_1-x)^2}\right\}$$ 
where $c>0$ is a constant.  In a similar way, we can also obtain that for $\tau_1+1\le x\le n-1$ 
$$|\rho_{11}(x)| < c \max\left\{\dfrac{1}{(n-x)^2}, \dfrac{1}{(x-\tau_1)^2}\right\}$$ 
 and for $n+1\le x<y\le 2n$
$$|\rho_{11}(x)| < c \max\left\{\dfrac{1}{(x-n)^2}, \dfrac{1}{(2n-x)^2}\right\},$$ 
 where  $c$ is some universal positive constants.

Altogether, we have that there exists an absolute constant $C_2$ that does not depend on $n,\tau_1$, such that for any $x\in \mR_{\tau_2}$ the following inequality holds: 
$$|\rho_{11}(x)|\le 
4\pi^2 C_2^2 \max\left\{\dfrac{1}{(x-1)^2}, \dfrac{1}{(\tau_1-x)^2}, \dfrac{1}{(x-n)^2}, \dfrac{1}{(2n-x)^2}
\right\}.
$$

By Lemma \ref{lemma.davies}, we have
\begin{align*}
\p\left(\sup_{ t\in  \mR_{\tau_2}} \cG(t)>z\right)=& \dfrac{1}{2\pi}\exp\left(-\dfrac{z^2}{2}\right)\int_{\mR_{\tau_2}} |\rho_{11}(t)|^{1/2}\dt t\\
=&  C_2\exp\left(-\dfrac{z^2}{2}\right)\log (2n-6).
\end{align*}

As a result, we obtain
\begin{align*}
\p\left(\max_{\tau_2\in \bS_{\tau_2}} \eps_{(\tau_2)}>z\right)\le\p\left(\sup_{t\in \mR_{\tau_2}} \cG(t)>z\right)= C_2\exp\left(-\dfrac{z^2}{2}\right)\log (2n-6),
\end{align*}
which proves (\ref{ieq.gausain1.slope}).

The proof of (\ref{ieq.gausain2.slope}) can be obtained similarly, thus is omitted here.
\end{proof}

\subsection{Tight probabilistic bounds  for fitting too many changes.} \label{sec:D.3}
In this step, we provide the following two lemmas, which is much tighter than simply applying Bonferroni correction for the probability of maximum over the events that we fit too many changes.

\begin{lemma}\lbl{lemma.max1.B} As long as $\gamma>\max\bigg\{2\log(2n)+32\log(C_1'''\log (2n)), 2\log(2n)+972(2m^\ast+1)\bigg\}$, where $C_1'''$ is a positive constant only related to $C_1$ in (\ref{ieq.gausain0.slope}),
we have 
$$
\p\left(\max_{\tau_1}\left\{\Loss^\ast(\bS)-\Loss_n(\bS; \tau_{1}, \tau^\ast)\right\}\ge \dfrac{\gamma-2\log (2n)}{6}\right)\le \exp\left(-\dfrac{\gamma-2\log(2n)}{24(2m^\ast+1)}\right)
$$
\end{lemma}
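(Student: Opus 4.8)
The plan is to reduce the maximum to the orthogonal-basis representation of Section~\ref{subsec:basis}, isolate the single coefficient that depends on $\tau_1$, and then balance a central chi-square tail bound against the Gaussian-maximum inequality of Lemma~\ref{lemma.max.Gaussian.A}. First I would invoke Lemma~\ref{lem.chi-square.slope}: for each fixed candidate $\tau_1$ we have $\Loss^\ast(\bS)-\Loss(\bS;\tau_1,\tau^\ast)=\eps_{(C)}^2+\eps_{(L)}^2+\eps_{(\tau^\ast)}^2+\eps_{(\tau_1)}^2$. The first three summands are projections of the noise onto the \emph{fixed} orthonormal vectors $\psi_{(C)},\psi_{(L)},\psi_{(\tau^\ast)}$, so they do not depend on $\tau_1$; only $\eps_{(\tau_1)}^2$ does. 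Hence, writing $W:=\eps_{(C)}^2+\eps_{(L)}^2+\eps_{(\tau^\ast)}^2\sim\chi^2_3$,
$$
\max_{\tau_1}\{\Loss^\ast(\bS)-\Loss(\bS;\tau_1,\tau^\ast)\}=W+\max_{\tau_1\in\bS_{\tau_1}}\eps_{(\tau_1)}^2.
$$

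Setting $\Gamma:=\gamma-2\log(2n)$ and $a:=\Gamma/6$, I would split the event by a union bound: for any allocation $a_1+a_2=a$,
$$
\p\Big(W+\max_{\tau_1}\eps_{(\tau_1)}^2\ge a\Big)\le \p\big(W\ge a_1\big)+\p\Big(\max_{\tau_1}|\eps_{(\tau_1)}|\ge \sqrt{a_2}\Big),
$$
which is valid irrespective of the (strong) dependence between the two pieces. The first term is handled by the central chi-square tail \eqref{ineq.chisq.upper} of Lemma~\ref{lemma.chi-square}; since the hypothesis $\gamma>2\log(2n)+972(2m^\ast+1)$ forces $\Gamma>972(2m^\ast+1)$, any allocation with $a_1$ a suitable multiple of $\Gamma/(2m^\ast+1)$ lies well inside the clean regime where $\sqrt{3(2a_1-3)}$ is negligible against $a_1$, so $\p(W\ge a_1)\le \tfrac12\exp(-\Gamma/(24(2m^\ast+1)))$. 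The second term is controlled by Lemma~\ref{lemma.max.Gaussian.A}: by symmetry of the centred Gaussian field $\{\eps_{(\tau_1)}\}$ one gets $\p(\max_{\tau_1}|\eps_{(\tau_1)}|\ge\sqrt{a_2})\le 2C_1\log(2n)\exp(-a_2/2)$.

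The one genuinely delicate point is absorbing the $\log(2n)$ prefactor produced by the maximum inequality into the exponential rate, and this is exactly what the two lower bounds on $\gamma$ buy. Because $m^\ast\ge1$ here, $\tfrac{2m^\ast}{2m^\ast+1}\ge\tfrac23$, so after allocating $a_1$ to the chi-square piece there remains a fixed positive margin $a_2/2-\Gamma/(24(2m^\ast+1))$ that is a definite fraction of $\Gamma$; the condition $\gamma>2\log(2n)+32\log(C_1'''\log(2n))$, with $C_1'''$ chosen as a suitable power of $4C_1$ (one checks the needed inequality $\Gamma\gtrsim 20\log(4C_1\log(2n))$, whose $\log\log(2n)$ coefficient $20$ is comfortably below the stated $32$), guarantees $2C_1\log(2n)\le \tfrac12\exp\big(a_2/2-\Gamma/(24(2m^\ast+1))\big)$. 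The $\log(2n)$ factor is thereby swallowed, the max-term probability is at most $\tfrac12\exp(-\Gamma/(24(2m^\ast+1)))$, and adding the two halves yields the stated bound. The main obstacle is precisely this bookkeeping: choosing the allocation $a_1,a_2$ and the constant $C_1'''$ so that both tail terms simultaneously fall below $\tfrac12\exp(-\Gamma/(24(2m^\ast+1)))$ while invoking only the two given lower bounds on $\gamma$.
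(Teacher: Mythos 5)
Your proof is correct for the lemma as stated, but it takes a genuinely different route from the paper's. Writing $\Gamma=\gamma-2\log(2n)$, you replace the paper's argument --- which exploits the \emph{independence} of $Z_1=\eps^2_{(C)}+\eps^2_{(L)}+\eps^2_{(\tau^\ast)}$ and $Z_2=\sup_{t}\cG^2(t)$, adds their log-Laplace transforms, and applies Birg\'e's Lemma 8.2 to obtain a single sub-exponential tail $C_1'''\log(2n)\exp(-4z/9)$ for the sum at \emph{every} threshold $z$ --- by an elementary union bound over an allocation $a_1+a_2=\Gamma/6$, treating the $\chi^2_3$ piece and the Gaussian-maximum piece separately. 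Your bookkeeping is sound: with $a_1=\Gamma/(6(2m^\ast+1))$, the hypothesis $\Gamma>972(2m^\ast+1)$ puts the chi-square tail comfortably below $\tfrac12\exp\big(-\Gamma/(24(2m^\ast+1))\big)$, and since $m^\ast\ge 1$ the remainder $a_2\ge\Gamma/9$ leaves a margin of at least $\Gamma/24$ with which the condition $\gamma>2\log(2n)+32\log(C_1'''\log(2n))$ absorbs the $\log(2n)$ prefactor from Lemma \ref{lemma.max.Gaussian.A} (your symmetrisation step, costing a factor $2$, is also fine since the field is centred Gaussian). What the paper's heavier machinery buys is a strictly stronger conclusion: the same displayed probability bound but at the much smaller threshold $z=\Gamma/(6(2m^\ast+1))$ --- this is the value substituted at the end of the paper's proof --- and it is this strengthened form, not the lemma as literally stated, that is invoked in the proof of Lemma \ref{lemma.addk2.B} to bound $\p\big(\Loss^\ast(\bS)-\min_{\tau_1}\Loss(\bS;\tau_1,\tau^\ast)\ge b(\gamma,2n)\big)$ with $b(\gamma,2n)=\Gamma/(6(2m^\ast+1))$. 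Your union bound cannot reach that threshold: with $a=\Gamma/(6(2m^\ast+1))$ the target is $\exp(-a/4)$, and to beat $\tfrac12\exp(-a/4)$ you would need $a_1$ strictly above $a/2$ for the chi-square piece (because of the $\sqrt{6a_1}$ correction in the tail bound) and simultaneously $a_2$ strictly above $a/2$ for the Gaussian piece (because of the $\log(2n)$ prefactor), contradicting $a_1+a_2=a$. So your argument establishes the statement as written, while the independence/Laplace-transform route is what makes the downstream use of this lemma legitimate.
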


\begin{proof}
By the orthonormality of the basis and the properties (i), 
\begin{align*}
\Loss^\ast(\bS)-\Loss(\bS; \tau_1)\le& \Loss^\ast(\bS)-\Loss(\bS; \tau_{1},\tau^\ast)\\
=&\eps^2_{(C)}+\eps^2_{(L)}+\eps^2_{(\tau^\ast)}+\eps^2_{(\tau_1)}
\end{align*}

This implies
\begin{align*}
\p\left(\Loss^\ast(\bS)-\min_{\tau_1\in \bS_{\tau_1}}\Loss(\bS; \tau_1)\ge z\right)\le& \p\left(\eps^2_{(C)}+\eps^2_{(L)}+\eps^2_{(\tau^\ast)}+\max_{\tau_1\in\bS_{\tau_1}}\left(\eps^2_{(\tau_1)}\right)\ge z\right)\\
\le & \p\left(\eps^2_{(C)}+\eps^2_{(L)}+\eps^2_{(\tau^\ast)}+\eps^2_{(\tau_1)}+\sup_{t\in \mR_{\tau_1}} \cG^2(t)\ge z\right),
\end{align*}
where $\cG(t)$ is the  continuous Gaussian process constructed based on $\eps_{(\tau_1)}, \tau_1\in\bS_{\tau_1}$ as in the proof of Lemma \ref{lemma.max.Gaussian.A}.

Let $Z_1=\eps^2_{(C)}+\eps^2_{(L)}+\eps^2_{(\tau^\ast)}$ and $Z_2=\sup_{t\in \mR_{\tau_1}} \cG^2(t)$. Note that $\eps_{(C)},\eps_{(L)}$ and $\eps_{(\tau^\ast)}$  are i.i.d random variables with $\cN(0,1)$ distribution, and are all independent to $\eps_{(\tau_1)}$ for any $\tau_1\in \bS_{\tau_1}$. Therefore $Z_1\sim \chi^2_3$ and is independent to $Z_2$.

Using the arguments from the proof of Lemma 1 in \cite{laurent2000adaptive}, we can upper bound the logarithm of Laplace transform of $Z_1$: 
$$
\log\{\e \exp\left(u(Z_1-3)\right)\}\le \frac{3u^2}{1-2u}, \quad \mbox{ for } 0<u<1/2.
$$

Note that for all $z$
$$\p\left(\sup_{t\in \mR_{\tau_1}} \cG(t)\ge z\right) \le \p\left(\sup_{t\in \mR_{\tau_1}} |\cG(t)|\ge z\right) \le 2\p\left(\sup_{t\in \mR_{\tau_1}} \cG(t)\ge z\right). $$

Therefore we have the probability density function $f(z)$ for $\sup_{t\in \mR_{\tau_1}} |\cG(t)|$ is upper bounded by $C_1'\log (2n-6)x\exp(-z^2/2)$ when $z$ is large enough, where $C_1'$ is a positive constant only depending on $C_1$. This further leads to an upper bound on the Laplace transform of $Z_2$:
$$\log\{\e\exp(uZ_2)\}\le \log(C_1''\log(2n-6))-\dfrac{\log(1-2u)}{2}.$$

For $0<u<1/2$, we have 
\begin{align*}
\log\e\exp\left[u\{Z_2-1-\log(C_1''\log(2n-6))\}\right]\le&-u-\dfrac{\log(1-2u)}{2}\\
\le&\dfrac{u^2}{1-2u}
\end{align*}
As a result,  let $Z= Z_1+Z_2-4-\log(C_1''\log(2n-6))$, then
\begin{align*}
\log\{\e \exp\left(u(Z)\right)\}
\le \dfrac{5u^2}{1-2u}
\end{align*}

By Lemma 8.2 in \cite{birge2001alternative}, if 
$$
\log\left(\e e^{uZ}\right)\le \dfrac{v^2u^2}{1-bu}, \quad \mbox{for }  0<t< b^{-1}
$$
then for any positive $x$,
$$
\p\left(Z\ge bx+2v\sqrt{x}\right)\le \exp(-x).
$$
Hence, we have for any given $\tau_1$, as long as $z\ge 162$,
\begin{align*}
\p\left(\Loss^\ast(\bS)-\min_{\tau_1\in \bS_{\tau_1}}\Loss(\bS; \tau_1, \tau_{2})\ge z\right)\le & \p\left(Z\ge z-4-\log(C_1''\log(2n-4))\right)\\
\le& \p\left(Z\ge z-(C_1''+4)-\log\log(2n)\right)\\
\le& C_1'''\log(2n)\exp\left(-\dfrac{4z}{9}\right),
\end{align*}
where $C_1'''$ is a positive constant only depend on $C_1$.


Taking $z=\dfrac{\gamma-2\log (2n)}{6(2m^\ast+1)}$, then as long as 
$$\gamma>\max\bigg\{2\log(2n)+32\log(C_1'''\log (2n)), 2\log(2n)+972(2m^\ast+1)\bigg\}$$ we have  
$$C_1''' \log(2n) \exp\left(-\dfrac{4z}{9}\right)\le \exp\left(-\dfrac{\gamma-2\log(2n)}{24(2m^\ast+1)}\right), $$
which completes the proof.

\end{proof}

\begin{lemma}\lbl{lemma.max2.B} As long as $\gamma>\max\left\{240, 24\log(2C_2'''\log (2n))\right\}$, where $C_2'''$ is a positive constant only related to $C_2$ in (\ref{ieq.gausain1.slope}),
we have 
$$
\p\left(\max_{\tau_1,\tau_2}\left\{\Loss^\ast(\bS)-\Loss_n(\bS; \tau_{1:2}, \tau^\ast)\right\}\ge \gamma-\dfrac{\gamma-2\log (2n)}{6}\right)\le \exp\left(-\dfrac{\gamma-3\log(2n)}{3}\right)
$$
\end{lemma}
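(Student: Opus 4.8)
The plan is to follow the template of the proof of Lemma \ref{lemma.max1.B}, but to treat the two extra changepoints asymmetrically: one is dispatched by a crude union bound and the other through the maximal inequality for its associated Gaussian process. First I would invoke the orthogonal-basis representation of Section \ref{subsec:basis}. Since $\psi_{(\tau_1)}$ and $\psi_{(\tau_2)}$ are orthonormal to one another and to $\psi_{(C)},\psi_{(L)},\psi_{(\tau^\ast)}$, property (i) together with the computation in Lemma \ref{lem.chi-square.slope} gives
\begin{equation*}
\Loss^\ast(\bS)-\Loss(\bS;\tau_{1:2},\tau^\ast)=\eps^2_{(C)}+\eps^2_{(L)}+\eps^2_{(\tau^\ast)}+\eps^2_{(\tau_1)}+\eps^2_{(\tau_2)}.
\end{equation*}
The structural choice is to separate the two maxima at different levels: I would first apply a union bound over the at-most $2n$ possible locations of $\tau_1$, which costs a multiplicative factor $2n$ (precisely the factor that re-emerges as the $3\log(2n)$ in the final exponent), and then, for each fixed $\tau_1$, treat the remaining maximum over $\tau_2$ analytically.

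For a fixed $\tau_1$, the four coefficients $\eps_{(C)},\eps_{(L)},\eps_{(\tau^\ast)},\eps_{(\tau_1)}$ are i.i.d.\ standard normals, jointly Gaussian with and uncorrelated to the whole family $\{\eps_{(\tau_2)}\}$, hence independent of it; so $Z_1:=\eps^2_{(C)}+\eps^2_{(L)}+\eps^2_{(\tau^\ast)}+\eps^2_{(\tau_1)}\sim\chi^2_4$ is independent of $Z_2:=\max_{\tau_2\in\bS_{\tau_2}}\eps^2_{(\tau_2)}=\sup_{t\in\mR_{\tau_2}}\cG^2(t)$. I would bound the two Laplace transforms exactly as in Lemma \ref{lemma.max1.B}: the centred $\chi^2_4$ satisfies $\log\e\exp(u(Z_1-4))\le 4u^2/(1-2u)$, while the maximal inequality (\ref{ieq.gausain1.slope}) yields a density bound of the form $C_2'\log(2n)\,z\exp(-z^2/2)$ for $\sup_t|\cG(t)|$ and hence $\log\e\exp(uZ_2)\le\log(C_2''\log(2n))-\tfrac12\log(1-2u)$ for $0<u<1/2$. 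Adding and recentring gives $\log\e\exp(uZ)\le 5u^2/(1-2u)$ with $Z=Z_1+Z_2-5-\log(C_2''\log(2n))$, so Lemma 8.2 of \cite{birge2001alternative} (with $b=2$, $v^2=5$) supplies $\p(Z\ge 2x+2\sqrt{5}\,\sqrt{x})\le e^{-x}$.

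Finally I would choose the deviation $x$ so that $2x+2\sqrt{5}\,\sqrt{x}$ matches the threshold $\gamma-\tfrac{\gamma-2\log(2n)}{6}$ minus the recentring constant $5+\log(C_2''\log(2n))$; for large $\gamma$ this forces $x$ of order $\gamma$, giving a per-$\tau_1$ bound of order $e^{-x}$. Multiplying by the $2n$ union-bound factor and absorbing the offset then produces the stated $\exp(-\tfrac{\gamma-3\log(2n)}{3})$, with $C_2'''$ collecting the constants from (\ref{ieq.gausain1.slope}). The two hypotheses on $\gamma$ are exactly what render the lower-order terms negligible: $\gamma>240$ dominates the absolute constants and the $\sqrt{x}$ correction, while $\gamma>24\log(2C_2'''\log(2n))$ dominates the logarithmic offset coming from the Gaussian-process maximum. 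The main obstacle, and the whole reason this lemma is needed in place of a Bonferroni bound, is to keep a $\log(2n)$ term out of the requirement on $\gamma$: a naive union bound over the pair $(\tau_1,\tau_2)$ would contribute $(2n)^2$ and force $\gamma\gtrsim\log(2n)$, whereas treating $\tau_2$ through the Davies-type maximal inequality (\ref{ieq.gausain1.slope}) replaces one of the $2n$ factors by $\log(2n)$, leaving only the far milder requirement $\gamma\gtrsim\log\log(2n)$.
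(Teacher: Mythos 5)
Your proposal is correct and takes essentially the same route as the paper: the paper's own proof likewise treats the maximum over $\tau_2$ for each fixed $\tau_1$ via the Laplace-transform/Birg\'e argument of Lemma \ref{lemma.max1.B} (built on the maximal inequality (\ref{ieq.gausain1.slope})), and then applies a Bonferroni bound over the $2n-2$ possible locations of $\tau_1$ before taking $z=\gamma-\tfrac{\gamma-2\log(2n)}{6}$ and checking the two conditions on $\gamma$. The only difference is expository: you spell out the per-$\tau_1$ step (in particular that the $\chi^2_4$ part, now including $\eps^2_{(\tau_1)}$, is independent of the Gaussian process indexed by $\tau_2$), which the paper compresses into ``using a similar argument to that of the proof of Lemma \ref{lemma.max1.B}''.
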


\begin{proof}

Using a similar argument to that of the proof of Lemma \ref{lemma.max1.B}, we have that for any given $\tau_1$  and $\tau_2$, as long as $z\ge 200$,
 \begin{align*}
\p\left(\Loss^\ast(\bS)-\min_{\tau_2\in\bS_{\tau_1}}\Loss(\bS; \tau_1, \tau_{2})\ge z\right)
\le C_2'''\log(2n)\exp\left(-\dfrac{9z}{20}\right),
\end{align*}
where $C_2'''$ is a positive constant only depends on $C_2$.

Consider all the $2n-2$ possible locations for the first change $\tau_1$, by Bonferroni correction, 
\begin{align*}
\p\left(\Loss^\ast(\bS)-\min_{\tau_1, \tau_2}\Loss(\bS; \tau_{1}, \tau_{2})\ge z\right)\le& (2n-2)\p\left(\Loss^\ast(\bS)-\min_{\tau_2\in \bS_{\tau_1}}\Loss(\bS; \tau_1, \tau_{2})\ge z\right)\\
\le& C_2''' 2n \log(2n) \exp\left(-\dfrac{9z}{20}\right).
\end{align*}

Taking $z=\gamma-\dfrac{\gamma-2\log (2n)}{6}=\dfrac{5\gamma+2\log(2n)}{6}$, then as long as $$\gamma>24\log(2C_2'''\log (2n)),$$ we have  
$$C_2''' 2n \log(2n) \exp\left(-\dfrac{9z}{20}\right)\le \exp\left(-\dfrac{\gamma-3\log(2n)}{3}\right), $$
which completes the proof.
\end{proof}

\begin{lemma}\lbl{lemma.max3.B} As long as $\gamma>\max\{132, 24\log(2C_3'''\log (2n))\}$, where $C_3'''$ is a positive constant only related to $C_3$ in (\ref{ieq.gausain2.slope})
we have 
$$
\p\left(\Loss^\ast(\bS)-\min_{\tau_{1:3}}\Loss(\bS; \tau_{1:3}, \tau^\ast)\ge 2\gamma-\dfrac{\gamma-2\log(2n)}{6}\right)\le \exp\left(-\dfrac{\gamma-3\log(2n)}{3}\right).
$$
\end{lemma}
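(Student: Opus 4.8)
The plan is to follow the same route as the proofs of Lemmas~\ref{lemma.max1.B} and \ref{lemma.max2.B}, now with three spurious changes $\tau_{1:3}$ fitted alongside the true change $\tau^\ast$. First I would pass to the orthonormal basis of Section~\ref{subsec:basis}. By Lemma~\ref{lem.chi-square.slope} and property~(i), for any fixed pair $\tau_1,\tau_2$,
\begin{align*}
\Loss^\ast(\bS)-\Loss(\bS;\tau_{1:3},\tau^\ast)=\eps^2_{(C)}+\eps^2_{(L)}+\eps^2_{(\tau^\ast)}+\eps^2_{(\tau_1)}+\eps^2_{(\tau_2)}+\eps^2_{(\tau_3)},
\end{align*}
so minimising the cost over $\tau_3\in\bS_{\tau_3}$ is the same as maximising $\eps^2_{(\tau_3)}$. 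Writing $Z_1=\eps^2_{(C)}+\eps^2_{(L)}+\eps^2_{(\tau^\ast)}+\eps^2_{(\tau_1)}+\eps^2_{(\tau_2)}$ and $Z_2=\sup_t\cG^2(t)$ for the Gaussian process $\cG$ built from $\{\eps_{(\tau_3)}\}$ exactly as in Lemma~\ref{lemma.max.Gaussian.B}, the orthogonality of $\psi_{(\tau_3)}$ to the other five basis vectors makes $Z_1\sim\chi^2_5$ independent of $Z_2$, and the quantity of interest is at most $Z_1+Z_2$.

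Next I would bound the two pieces by their Laplace transforms and combine. For $Z_2$, the maximum inequality~(\ref{ieq.gausain2.slope}) bounds the density of $\sup_t|\cG(t)|$ (for large argument) by a multiple of $\log(2n)\,z\exp(-z^2/2)$, giving $\log\e\exp(uZ_2)\le\log(C_3''\log(2n))-\tfrac12\log(1-2u)$ for $0<u<1/2$; for $Z_1$ the Laurent--Massart bound \citep{laurent2000adaptive} gives $\log\e\exp(u(Z_1-5))\le 5u^2/(1-2u)$. Using independence and setting $Z=Z_1+Z_2-6-\log(C_3''\log(2n))$, I obtain $\log\e\exp(uZ)\le v^2u^2/(1-2u)$ with $v^2$ a fixed absolute constant (naturally $v^2=7$, taken slightly loose as before). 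Lemma~8.2 of \cite{birge2001alternative} then yields $\p(Z\ge 2x+2v\sqrt{x})\le e^{-x}$, and inverting this for $z$ above a fixed absolute threshold produces, for each fixed $(\tau_1,\tau_2)$,
\begin{align*}
\p\Bigl(\Loss^\ast(\bS)-\min_{\tau_3\in\bS_{\tau_3}}\Loss(\bS;\tau_{1:3},\tau^\ast)\ge z\Bigr)\le C_3'''\log(2n)\exp(-cz)
\end{align*}
for an explicit $c$, of the same flavour as the $4/9$ and $9/20$ exponents obtained in the two previous lemmas.

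Finally I would apply Bonferroni over the at most $(2n)^2$ admissible outer pairs $(\tau_1,\tau_2)$, picking up a factor $(2n)^2$, and substitute $z=2\gamma-(\gamma-2\log(2n))/6=(11\gamma+2\log(2n))/6$. The hypothesis $\gamma>132$ forces $z$ above the threshold needed in the previous step, while $\gamma>24\log(2C_3'''\log(2n))$ absorbs the $C_3'''\log(2n)$ prefactor; the surviving factor $(2n)^2\log(2n)$ is then dominated by the exponential, leaving exactly $\exp(-(\gamma-3\log(2n))/3)$. This last constant-chasing is routine and parallels the end of the proof of Lemma~\ref{lemma.max2.B}.

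The crux, and the step I would be most careful about, is the innermost maximum over $\tau_3$. A naive union bound over its $O(n)$ admissible positions would cost a factor $n$ rather than $\log(2n)$, which is fatal here: because the change-in-slope location accuracy is polynomial in $T$ (order $T^{2/3}$) rather than logarithmic, far more candidate segmentations must be simultaneously controlled than in the change-in-mean case. Replacing that union bound by the Davies-type inequality of Lemma~\ref{lemma.davies}, applied to the constructed Gaussian process $\cG$, is precisely what keeps the innermost cost at $\log(2n)$, so that only the outer pair $(\tau_1,\tau_2)$ is handled crudely; checking that the surviving $(2n)^2\log(2n)$ factor is still beaten by the exponential is exactly what dictates the stated thresholds on $\gamma$.
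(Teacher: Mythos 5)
Your proposal is correct and follows essentially the same route as the paper's proof: the paper likewise fixes the outer pair $(\tau_1,\tau_2)$, controls the inner maximum over $\tau_3$ via the Gaussian-process bound (\ref{ieq.gausain2.slope}) combined with the Laplace-transform/Birg\'e argument of Lemma \ref{lemma.max1.B} to get a $C_3'''\log(2n)\exp(-cz)$ bound, then applies Bonferroni over the at most $(2n)^2$ outer pairs and substitutes $z=(11\gamma+2\log(2n))/6$. Your filling-in of the inner step (with $Z_1\sim\chi^2_5$ independent of $Z_2=\sup_t\cG^2(t)$) is exactly what the paper leaves implicit by its reference to Lemma \ref{lemma.max2.B}, and your identification of the Davies-type inequality as the device that keeps the innermost cost at $\log(2n)$ matches the paper's stated rationale.
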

\begin{proof}
 Using the similar argument as in the proof of Lemma \ref{lemma.max2.B}, we have for any given $\tau_1$  and $\tau_2$, as long as $z\ge 242$,
 \begin{align*}
\p\left(\Loss^\ast(\bS)-\min_{\tau_3}\Loss(\bS; \tau_1, \tau_{2})\ge z\right)
\le C_3'''\log(2n)\exp\left(-\dfrac{10z}{22}\right),
\end{align*}
where $C_3'''$ is a positive constant only depends on $C_3$.

Consider all the $(2n-2)\times(2n-3)$ possible locations for the first two changes $\tau_1$ and $\tau_2$, by Bonferroni correction, 
\begin{align*}
\p\left(\Loss^\ast(\bS)-\min_{\tau_1, \tau_2,\tau_3}\Loss(\bS; \tau_{1}, \tau_{2}, \tau_3)\ge z\right)\le& (2n-2)\p\left(\Loss^\ast(\bS)-\min_{\tau_3}\Loss(\bS; \tau_1, \tau_{2}, \tau_3)\ge z\right)\\
\le& C_3''' (2n)^2 \log(2n) \exp\left(-\dfrac{10z}{22}\right).
\end{align*}

Taking $z=2\gamma-\dfrac{\gamma-2\log (2n)}{6}=\dfrac{11\gamma+2\log(2n)}{6}$, as long as $$\gamma>24\log(2C_3'''\log (2n)),$$ we have 
$$C_3''' (2n)^2 \log(2n) \exp\left(-\dfrac{10z}{22}\right)\le \exp\left(-2\dfrac{(\gamma-3\log(2n))}{3}\right),$$
which completes the proof.
\end{proof}


\bibliographystyle{Chicago}





\end{document}